\newtheorem{thm}{Theorem}[section]
\newtheorem*{thm*}{Theorem} 
\newtheorem{lem}[thm]{Lemma}
\newtheorem{prop}[thm]{Proposition}
\newtheorem{cor}[thm]{Corollary}
\theoremstyle{definition}
\newtheorem{Def}[thm]{Definition}
{\newtheorem{ex}[thm]{Example}}
{\newtheorem{rem}[thm]{Remark}}
{}
{\newtheorem{hyp}[thm]{Hypothesis}}
\renewcommand{\theenumi}{(\alph{enumi})}
\renewcommand{\labelenumi}{\alph{enumi})}  
\newcommand{\C}{\ensuremath{\mathbb{C}}}
\newcommand{\Q}{\ensuremath{\mathbb{Q}}}
\newcommand{\A}{\ensuremath{\mathbb{A}}}
\newcommand{\X}{\ensuremath{\mathcal{X}}}
\newcommand{\F}{\ensuremath{\mathcal{F}}}
\newcommand{\I}{\ensuremath{\mathcal{I}}}
\newcommand{\GL}{\operatorname{GL}}
\newcommand{\SL}{\operatorname{SL}}
\newcommand{\Ga}{\mathbb{G}_a}
\newcommand{\Galf}{\ensuremath{\mathrm{Gal}}}
\newcommand{\Hom}{\operatorname{Hom}}
\newcommand{\Aut}{\operatorname{Aut}}
\newcommand{\pr}{\operatorname{pr}}
\newcommand{\cha}{\operatorname{char}}
\newcommand{\Spec}{\operatorname{Spec}}
\newcommand{\Id}{\operatorname{id}}
\newcommand{\trd}{\operatorname{trdeg}}
\newcommand{\Frac}{\ensuremath{\mathrm{Frac}}}
\newcommand{\Ind}{\operatorname{Ind}}
\newcommand{\qq}{/\!/}
\newcommand{\ind}{\operatorname{Ind}^G_H(Y)}
\newcommand{\iind}{\operatorname{Ind}}
\newcommand{\indF}{\operatorname{Ind}^{G_F}_{H_F}}
\newcommand{\Fa}{{\bar{F}}}
\newcommand{\Ka}{{\bar{K}}}
\newcommand{\de}{\partial}
\newcommand{\del}{\ensuremath{\partial}}
\newcommand{\dt}{\widetilde{\partial}}
\newcommand{\id}{\operatorname{id}}
\renewcommand{\O}{\mathcal{O}}
\renewcommand{\H}{\mathcal{H}}
\newcommand{\M}{\mathcal{M}}
\renewcommand{\P}{\mathbb{P}}
\newcommand{\til}{\widetilde}
\title{Differential Embedding Problems over Complex Function Fields}
\author{Annette Bachmayr, David Harbater, Julia Hartmann, and Michael Wibmer}
\date{June 30, 2017}
\begin{document}

\thanks{The first author was funded by the Deutsche Forschungsgemeinschaft (DFG) -
grant MA6868/1-1. The second and third authors were supported in part by NSF FRG grant DMS-1463733. The third author acknowledges support from the Simons foundation (Simons fellowship).
The third and fourth author also acknowledge partial support by NSF grant DMS-1606334 (DART VII).\\
\textit{Mathematics Subject Classification} (2010): 12H05, 20G15, 14H55, 34M50.\\
\textit{Key words and phrases.} Differential algebra, linear algebraic groups and torsors, patching, Picard-Vessiot theory, embedding problems, inverse differential Galois problem, Riemann surfaces.}


\begin{abstract}
			We introduce the notion of differential torsors, which allows the adaptation of constructions from algebraic geometry to differential Galois theory. Using these differential torsors, we set up a general framework for applying patching techniques in differential Galois theory over fields of characteristic zero. We show that patching holds over function fields over the complex numbers. As the main application, we prove the solvability of all differential embedding problems over complex function fields, thereby providing new insight on the structure of the absolute differential Galois group, i.e., the fundamental group of the underlying Tannakian category. 
	
\end{abstract}

\maketitle

\section*{Introduction}
This paper concerns embedding problems in differential Galois theory. Our main result generalizes two classical results over one-variable complex function fields:

\begin{itemize}
\item the solution of the inverse problem in differential Galois theory, and 
\item the solvability of all embedding problems in ordinary Galois theory. 
\end{itemize}

Inverse problems in Galois theory ask for the existence of Galois extensions of a given field with prescribed Galois group.  Embedding problems generalize this and are used to study how Galois extensions fit together in towers. 
In other words, the inverse problem asks which groups are epimorphic images of the absolute Galois groups, whereas solutions to embedding problems yield epimorphisms that in addition factor over a given epimorphism of groups. Therefore, solvability of embedding problems provides additional information about the structure of the absolute Galois group. This applies to both classical and differential Galois theory, where the absolute differential Galois group of a differential field $F$ is the fundamental group of the Tannakian category of all differential modules over $F$.

In the arithmetic context, the study of embedding problems has led to realizing all solvable groups as Galois groups over $\Q$ (\cite{Shaf}), and determining the structure of the maximal prosolvable extension of $\Q^{\mathrm{ab}}$ (\cite{Iwas}).  Solving embedding problems led to the proof of freeness of the absolute Galois group of a function field over an algebraically closed base field (\cite{Har95}, \cite{Pop95}), and contributed to the proof of Abhyankar's conjecture on fundamental groups in characteristic $p$ (see \cite{Serre90}, \cite{Raynaud94}, \cite{Har94}).  See \cite[Chapter~IX]{NSW} and \cite[Section~5]{Har03} for more about the arithmetic and geometric cases, respectively.

Differential Galois theory is an analog of Galois theory for (linear homogeneous) differential equations, over fields of characteristic zero. 
The symmetry groups that occur are no longer finite (or profinite), but rather are linear algebraic groups over the field of constants of the differential field. The corresponding inverse problem has been studied by a number of researchers. It was first solved for function fields of complex curves by Tretkoff and Tretkoff (\cite{Tretkoff}), as a relatively straightforward consequence of Plemelj's solution of the Riemann-Hilbert problem (\cite{Plemelj}). More generally, it has been solved when the base field is the field of functions of a curve over an algebraically closed field. See \cite{Hartmann} for the case of rational function fields; the general case, which appeared 
in \cite{Oberlies}, is based on the rational case and on Kovacic's trick (see also Proposition~\ref{Kovacic trick}). This solution built on prior work by Kovacic, Mitschi, and Singer (\cite{Kovacic}, \cite{Kovacic1}, \cite{singer}, \cite{mitschisinger1}, \cite{mitschisinger2}).  The differential inverse problem has also been solved for function fields over certain non-algebraically closed fields, including over the real numbers (\cite{Dyc}) and over fields of Laurent series (\cite{BHH}).

Embedding problems in differential Galois theory have been considered by several researchers (\cite{MatzatPut}, \cite{Hartmann}, \cite{Oberlies}, \cite{Ernst}). In fact, they were already used by Kovacic in his seminal work (\cite{Kovacic}, \cite{Kovacic1}) on the inverse problem and played a crucial role in the solution of the inverse problem over algebraically closed constant fields.  
In his thesis, Oberlies solved some types of differential embedding problems over function fields of curves over algebraically closed fields, including all differential embedding problems with connected cokernel (\cite{Oberlies}). However, the general solvability of embedding problems remained open, even in the classical case where the base field is $\C(x)$, although the inverse problem there had long been solved. In this paper, we close this gap and prove the following (see Theorem~\ref{thm ebp application}):

\begin{thm*} 
Every differential embedding problem over a one-variable complex function field (equipped with any nontrivial derivation), has a proper solution. 
\end{thm*}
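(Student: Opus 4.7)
The plan is to reduce the general differential embedding problem $\alpha:\Gamma\twoheadrightarrow G$ over $F$ (with a given Picard--Vessiot extension $E/F$ of group $G$) to manageable base cases via a dévissage on the kernel $N=\ker(\alpha)$, and then to solve each base case using the differential-torsor patching machinery developed earlier in the paper. By passing through a chain of $\Gamma$-stable normal subgroups of $N$, I would reduce to the situation in which $N$ is either (i) commutative or (ii) a simple algebraic group: solving the embedding problem for the quotient and then for the remaining piece on top produces, by composition, a proper solution of the original problem.

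For the construction in each base case I plan to exploit that $F$ is a one-variable complex function field. The paper provides a patching setup on the associated compact Riemann surface $X$: after covering $X$ by two opens, there are overfields $F_1,F_2$ with overlap $F_0$, and any compatible pair of differential torsors on the two patches can be glued into a global differential torsor. My task is then to exhibit a $\Gamma$-differential torsor over $F$ whose pushout along $\alpha$ recovers the given $G$-torsor associated with $E/F$. I would do this by producing such a $\Gamma$-torsor on each patch, together with a compatibility isomorphism over $F_0$, and then invoke patching. On one patch the $\Gamma$-torsor is taken to be pulled back from the split $\Gamma$-torsor; on the other I need a nontrivial $\Gamma$-torsor, which in the commutative case is furnished by explicit constructions (primitives for vector groups, logarithms for tori, classical Galois theory for finite quotients), and in the simple case by the differential inverse problem via Tretkoff--Tretkoff combined with Kovacic's trick (Proposition~\ref{Kovacic trick}).

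The main obstacle I expect is twofold. First, one must guarantee that the patched $\Gamma$-torsor yields an honest Picard--Vessiot extension, i.e., that its function field has $\C$ as field of constants and that the resulting Galois group is exactly $\Gamma$ rather than a proper closed subgroup. The fact that constants behave correctly is precisely where the complex-function-field hypothesis enters, via the patching-holds result cited in the introduction. To exclude proper subgroups one should choose the local realization on one patch sufficiently generically, for instance by introducing additional transcendental parameters, so that no proper closed subgroup of $\Gamma$ containing the image of $G$ could accommodate the monodromy. Second, the compatibility isomorphism on $F_0$ amounts to the vanishing of a differential cohomology obstruction; arranging this in the non-split or non-abelian kernel case, where twisting by the $G$-action on the fibre of $E/F$ is unavoidable, is in my view the technical heart of the argument and the place where the language of differential torsors introduced in the paper is essential.
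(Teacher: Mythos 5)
Your outline of the split case essentially matches the paper: embed the given Picard--Vessiot ring $R$ into the field of meromorphic functions on one patch, realize the kernel $N$ over $\C(x)$ via Tretkoff--Tretkoff, transfer it to the other patch using Proposition~\ref{Kovacic trick}, induce both up to $G$-torsors, and glue over $F_0$ via Lemma~\ref{int-fact lem} and Theorem~\ref{thm ebp new}. (One small correction there: no genericity or extra transcendental parameters are needed to rule out proper subgroups; in the paper the surjectivity of the patched Galois group comes from the ideal-stability analysis in Lemma~\ref{lem patching} together with the fact that the two subgroups $N$ and $H$ generate $G$, and the Kovacic trick is used only to keep the group equal to $N$ after base change.)

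The genuine gap is in the non-split case. Your plan --- d\'evissage on the kernel down to commutative or simple pieces, followed by a direct patching construction in which the compatibility isomorphism over $F_0$ is ``the vanishing of a differential cohomology obstruction'' to be arranged --- does not actually produce the required $\Gamma$-torsors on the patches or the gluing datum; the d\'evissage only relocates the difficulty, since each intermediate embedding problem is still a possibly non-split extension (now with commutative or simple kernel), and you give no argument for why the obstruction vanishes there. The paper resolves this by an entirely different mechanism (Proposition~\ref{prop: SEP to EP}) that involves no patching at all for the non-split problem: since $F$ has cohomological dimension one, $H^1(F,G)\to H^1(F,H)$ is surjective, so the $H$-torsor $Z=\Spec(R)$ lifts to a $G$-torsor $X$ \emph{as a torsor}; the differential structure is then transported onto $X$ using Proposition~\ref{prop: moving differential structures}\ref{moving b}; by Proposition~\ref{prop: torsors are ind}, $X=\Ind_J^G(Y)$ for a simple differential $J$-torsor $Y$ with coordinate ring a Picard--Vessiot ring $S$, and one checks $JN=G$; finally one solves the auxiliary \emph{split} embedding problem for $N\rtimes J$ over $S$ and pushes the solution down to $G=(N\rtimes J)/M$. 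Without this reduction (or a proof that your obstruction vanishes), the argument is incomplete precisely at the point that distinguishes this theorem from the previously known connected-kernel and split cases.
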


To prove the theorem, we first introduce the notion of \textit{differential torsors}. Differential torsors generalize Picard-Vessiot rings (the differential analogues of finite Galois extensions).  We use a criterion given in
\cite{AmanoMasuokaTakeuchi:Hopf} to characterize those differential torsors
that are Picard-Vessiot rings. This can be viewed as a converse to a
well-known theorem of Kolchin. One advantage of working with differential torsors is that we can adapt constructions from algebraic geometry, such as passing to quotients, inducing differential torsors from subgroups, and transporting differential structures along morphisms.

The other main advantage of working with differential torsors is that this allows us to apply {\em patching}. We first deduce a patching result for differential torsors from the
corresponding assertion for torsors without differential structure in
\cite{HHKtorsor}. Building on this, we prove a patching result for Picard-Vessiot rings (Theorem \ref{thm patching PVR}) and another patching result designed for solving embedding problems (Theorem \ref{thm ebp new}). These results are stated in a very general framework amenable to further applications.

We then show that our patching results apply over finite extensions of $\C(x)$ by proving a factorization statement for matrices whose
entries are meromorphic functions on connected metric open subsets of a
compact Riemann surface (Lemma \ref{int-fact lem}). 
Similar factorization results were an
important ingredient in the solution of the Riemann-Hilbert problem. 

The strategy of using torsors for the purpose of Galois realizations has
previously been employed by other researchers, e.g.\ \cite{lourdes3},
\cite{lourdes2}, \cite{lourdes1}, usually to produce generic extensions for
specific groups (see also the references there). We expect that the finer notion of differential torsors may be a new tool in finding further generic
differential Galois extensions. It is already applied in an upcoming preprint on differential Galois theory over large fields of constants (\cite{BHHPpreprint}). Moreover, our patching results (Theorem \ref{thm patching PVR}, \ref{thm ebp new}) are used in a second upcoming preprint on that topic (\cite{BHHpreprint}). The explicit framework for applying patching to differential Galois theory over $\C(x)$ that we develop in Section~\ref{sec: Complex embedding problems} is used in another project (\cite{BWpreprint}).

\medskip

The organization of the paper is as follows.
In Section~\ref{sec: Differential torsors}, we define the notion of
differential torsors over a differential field of characteristic zero. 
We show that differential structures on a torsor
correspond to derivations that are invariant under translation. It is then
shown that a differential structure can be detected locally, by relating
invariant derivations to certain point derivations. Using this,
differential structures can be transported along morphisms of torsors,
under certain additional conditions. Finally, we show that simple
differential torsors with no additional constants correspond to
Picard-Vessiot rings. Section~\ref{sec: Patch Embed} extends the patching
result for torsors from \cite{HHKtorsor} to differential torsors, and
deduces a patching result for Picard-Vessiot rings. This is applied to give
a result on split differential embedding problems, under the hypothesis
that kernel and cokernel have suitable realizations as differential Galois
groups (Theorem~\ref{thm ebp new}).
In Section~\ref{sec: Complex embedding problems}, we describe a patching setup where the base field $F$ is the function field of a complex curve, and use Theorem~\ref{thm ebp new} to show that every split differential embedding problem over $F$ has a proper solution. Finally, using the results from Section~\ref{sec: Differential torsors} on transporting differential structures along morphisms, we show that the solution of arbitrary differential embedding problems can be reduced to the split case.
The appendix collects definitions and results
about group actions, torsors, quotients, and induction of torsors from a
subgroup, for affine group schemes of finite type over an arbitrary field.

\medskip

We would like to thank Phyllis Cassidy, Thomas Dreyfus, Ray Hoobler and Michael Singer for fruitful discussions. The authors also received helpful comments on an earlier version of the manuscript during the conference DART VII.

\section{Differential torsors} \label{sec: Differential torsors}
This section introduces the notion of differential torsors. We show that their differential structure is determined locally, and that it is possible to transport a differential structure along a morphism of torsors, under certain conditions. This will be used in Section~\ref{sec: Complex embedding problems} to reduce the solution of all differential embedding problems (over complex function fields) to that of all split differential embedding problems. We also prove that simple differential $G_F$-torsors correspond to Picard-Vessiot rings over $F$ with differential Galois group $G$. For notation and basic facts about torsors, see the the appendix to this manuscript.

Recall that a {\em differential ring} is a commutative ring $R$ equipped with a derivation $\de\colon R\to R$. A {\em differential homomorphism} 
is a ring homomorphism between differential rings
that commutes with the derivations. The ring of {\em constants} of $R$ is defined to be $C_R:=\{f\in R\,|\, \de(f)=0\}$.  If $R$ is a field then so is $C_R$.  We call $R$ {\em simple} if it has no non-trivial differential ideals (i.e., ideals closed under $\de$).

Throughout this section, $(F,\de)$ is a differential field of characteristic zero, and we let $K$ denote its field of constants $C_F$, which is algebraically closed in $F$.
The letter $G$ denotes an affine group scheme of finite type over $K$. Since $K$ has characteristic zero, $G$ is smooth and we will refer to $G$ as a {\em linear algebraic group}.

We will often consider the base change $G_F$ from $K$ to $F$ and we will view $F[G_F]=F\otimes_K K[G]$ as a differential ring extension of $F$ by considering all elements in $K[G]$ as constants.  We write $F[G]:=F[G_F]$.

\subsection{Differential torsors and invariant derivations}

\begin{Def}
A {\em differential $G_F$-space} is an affine $G_F$-space $X$ equipped with an extension of the given derivation $\de$ from $F$ to $F[X]$, such that the co-action $\rho \colon F[X] \to F[X] \otimes_F F[G]$ corresponding to the action $\alpha$ of $G_F$ on $X$ is a differential homomorphism (a definition of the terms \textit{$G_F$-space} and \textit{co-action} is given in \ref{subsec Gspaces}). 
We also call such an extension a {\em differential structure} on $X$.
A {\em morphism of differential $G_F$-spaces} $\phi \colon X\to Y$ is a morphism of affine varieties that is $G_F$-equivariant and such that the corresponding homomorphism $F[Y]\to F[X]$ is a differential homomorphism. 
A differential $G_F$-space $X$ is {\em simple} if
$F[X]$ is a simple differential ring.  Note that a $G_F$-space $X$ is a {\em differential $G_F$-torsor}
(i.e., a differential $G_F$-space that is a $G_F$-torsor) 
if and only if the 
left $F[X]$-linear extension $F[X]\otimes_ F F[X]\to F[X]\otimes_F F[G]$ 
of $\rho$ is a differential isomorphism, or equivalently $(\alpha,\pr_1):X \times G  \to X \times X$ is a differential isomorphism.
\end{Def}

The next lemma gives a criterion for when a derivation on a $G_F$-torsor defines a differential torsor.

Let $\Fa$ denote an algebraic closure of $F$. Note that $\del\colon F \to F$ uniquely extends to a derivation $\del\colon \Fa\to\Fa$. Let $\Ka$ denote the algebraic closure of $K=C_F$ in $\Fa$ and note that $C_\Fa=\Ka$. In particular, if $K$ is algebraically closed, $C_{\Fa}=K$. Let $\Gamma$ denote the Galois group of $\Fa$ over $F$.

\begin{lem} \label{lem: invariant derivation}
	Let $X$ be a $G_F$-torsor, and let $\de\colon F[X]\to F[X]$ be a derivation extending $\de\colon F\to F$. Then $X$ is a differential $G_F$-torsor if and only if $g\circ\del=\del\circ g$ on $F[X]\otimes_K \Ka$ for all $g\in G(\Ka)$. Here $g\in G(\Ka)$ is viewed as the automorphism $g\colon F[X]\otimes_K \Ka\to F[X]\otimes_K \Ka$ corresponding to $X\rightarrow X$, $x\mapsto x.g$, and $\de\colon F[X]\otimes_K \Ka\to F[X]\otimes_K \Ka$ is the unique extension of $\de$.
\end{lem}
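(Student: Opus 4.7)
The plan is to rephrase both conditions as statements about a single $K$-linear map and then to reduce the converse direction to a density argument for $\Ka$-points of $G$. Since $X$ is already assumed to be a torsor, $X$ is a differential $G_F$-torsor precisely when the co-action
\[\rho\colon F[X]\to F[X]\otimes_F F[G]=F[X]\otimes_K K[G]\]
is a differential homomorphism, where $K[G]$ consists of constants. Equivalently, the $K$-linear map
\[D:=\rho\circ\del-(\del\otimes\id_{K[G]})\circ\rho\colon F[X]\longrightarrow F[X]\otimes_K K[G]\]
vanishes identically. The goal is thus to show that $D=0$ is equivalent to the commutation relations $g\circ\del=\del\circ g$ on $F[X]\otimes_K\Ka$ for every $g\in G(\Ka)$.

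For the forward direction, assume $D=0$. Write $g\in G(\Ka)$ as a $K$-algebra homomorphism $g^{*}\colon K[G]\to\Ka$; the corresponding automorphism $\til g$ of $F[X]\otimes_K\Ka$ is given by $b\otimes c\mapsto(\id_{F[X]}\otimes g^{*})(\rho(b))\cdot(1\otimes c)$. This factors as a composition of $\rho$, the map $\id\otimes g^{*}$ (which sends the constants $K[G]$ to the constants $\Ka$), and multiplication by the constant $1\otimes c$. Each of these three maps commutes with $\del$, hence so does $\til g$.

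Conversely, assume the commutation holds. Applying both sides to $b\otimes 1$ and using that $1\otimes c$ is a constant together with the identity $\til g(b\otimes 1)=(\id_{F[X]}\otimes g^{*})\rho(b)$, one obtains $(\id_{F[X]}\otimes g^{*})(D(b))=0$ in $F[X]\otimes_K\Ka$ for every $b\in F[X]$ and every $g\in G(\Ka)$. Now fix a $K$-basis $\{e_\lambda\}$ of $F[X]$, which is simultaneously a $\Ka$-basis of $F[X]\otimes_K\Ka$, and expand $D(b)=\sum_\lambda e_\lambda\otimes a_\lambda$ with $a_\lambda\in K[G]$. The vanishing above forces $g^{*}(a_\lambda)=0$ for every $\lambda$ and every $g\in G(\Ka)$. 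The main (and only slightly delicate) step is then this: since $\cha K=0$ the group scheme $G_{\Ka}$ is smooth and hence reduced, so its $\Ka$-points are Zariski-dense and any regular function on $G_{\Ka}$ vanishing on all of them is zero. This yields $a_\lambda\otimes 1=0$ in $K[G]\otimes_K\Ka$, and faithful flatness of $K\to\Ka$ then gives $a_\lambda=0$, so $D(b)=0$ for every $b$ as required.
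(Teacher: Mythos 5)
Your argument is correct and follows essentially the same route as the paper's proof: the forward direction factors the action of $g$ through $\rho$ and the constant-preserving map $\id\otimes g^{*}$, and the converse reduces the vanishing of $\rho\circ\del-(\del\otimes\id)\circ\rho$ to the reducedness of $G$ and the density of its $\Ka$-points. The only difference is that you spell out the basis-expansion and density step that the paper compresses into ``since $G$ is reduced this implies $\rho(\de(f))=\de(\rho(f))$,'' which is a welcome amplification rather than a deviation.
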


\begin{proof}

		Assume that $X$ is a differential $G_F$-torsor. Let $g$ be an arbitrary element of $G(\Ka)$.  Then the left square in
	$$
	\xymatrix{
		F[X] \ar^-\rho[r] \ar_\de[d] & F[X]\otimes_K K[G] \ar^{\de\otimes \operatorname{id}_{K[G]}}[d] \ar^-{\id_{F[X]}\otimes g}[rr] & & F[X]\otimes_K\Ka \ar^{\de\otimes\id_{\Ka}}[d] \\
		F[X] \ar^-{\rho}[r] & F[X]\otimes_K K[G] \ar^-{\id_{F[X]}\otimes g}[rr] & & F[X]\otimes_K \Ka	
	}
	$$
	commutes. Since the right square commutes, the outer rectangle also commutes; i.e., $g\circ\del=\del\circ g$ for all $g \in G(\Ka)$.
	
Conversely, if $g\circ\del=\del\circ g$ for all $g \in G(\Ka)$, then the outer rectangle commutes. So $\rho(\de(f))$ and $\de(\rho(f))\in F[X]\otimes_K K[G]$ have the same image in $F[X]\otimes_K \Ka$ for all $g\in G(\Ka)$ and $f\in F[X]$. Since $G$ is reduced this implies that $\rho(\de(f))=\de(\rho(f))$; i.e., $X$ is a differential torsor.	
\end{proof}

If $K$ is algebraically closed, $G(\Ka)=G(K)\subset G(F)$, and Lemma \ref{lem: invariant derivation} simplifies to

\begin{cor} \label{cor: invariant derivation}
Assume $K=\Ka$, let $X$ be a $G_F$-torsor and let $\de\colon F[X]\to F[X]$ be a derivation extending $\de\colon F\to F$. Then $X$ is a differential $G_F$-torsor if and only if $g\circ\del=\del\circ g$ on $F[X]$ for all $g\in G(K)$.
\end{cor}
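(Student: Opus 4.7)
The plan is to deduce this corollary as a direct specialization of Lemma \ref{lem: invariant derivation} to the hypothesis $K=\Ka$. First I would observe that under this hypothesis, the field $\Ka$ appearing in the lemma coincides with $K$ itself, so the ring $F[X]\otimes_K \Ka$ becomes simply $F[X]$, and the unique extension of $\del$ to this tensor product is just the given derivation on $F[X]$. Nothing subtle happens at the level of derivations here.

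Next I would address the group elements. Because $K=\Ka$, the set $G(\Ka)$ is literally $G(K)$. Every $g\in G(K)$ gives, via the inclusion $K\hookrightarrow F$, an element of $G(F)$, and hence acts on the $F$-algebra $F[X]$ as the automorphism corresponding to right translation $x\mapsto x.g$ on $X$. One has to check that this automorphism agrees with the one obtained by the procedure in Lemma \ref{lem: invariant derivation} (which first passes to $F[X]\otimes_K\Ka$); but this is immediate from naturality of the torsor action in the base ring, given that $\Ka=K$.

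With these two identifications in hand, the criterion of Lemma \ref{lem: invariant derivation} becomes exactly the statement that $g\circ\del=\del\circ g$ on $F[X]$ for all $g\in G(K)$, which is the content of the corollary. I do not anticipate any genuine obstacle: the only thing to be mildly careful about is confirming that the tensor-product extension of $\del$ and the base-change of the action are compatible with the simplification $\Ka=K$, but this is a routine bookkeeping check rather than a substantive step.
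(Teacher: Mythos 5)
Your proposal is correct and matches the paper exactly: the paper gives no separate proof of the corollary, deriving it precisely by observing that when $K=\Ka$ one has $G(\Ka)=G(K)\subset G(F)$ and $F[X]\otimes_K\Ka=F[X]$, so Lemma~\ref{lem: invariant derivation} specializes directly. The compatibility checks you flag are indeed routine and implicit in the paper's treatment.
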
 

Thus a derivation $\de: F[X]\to F[X]$ that extends $\de: F\to F$ is \emph{$G$-invariant} (i.e., 
$g\circ\del=\del\circ g$ for all $g\in G(\Ka)$) if and only if it turns the $G_F$-torsor $X$ into a differential $G_F$-torsor.

\subsection{Point derivations}

Our next goal is to show that a differential structure on a torsor can be detected locally, i.e., at a point.
For a linear algebraic group $G$, the space of $G$-invariant derivations on the coordinate ring of $G$ is isomorphic to the tangent space at the identity (\cite[Theorem 9.1]{Humphreys:Linear algebraic groups}). A similar construction applies in our setting, as we now discuss. 

 Let $R$ be a ring and let $M$ be an $R$-module. A {\em derivation} $\de:R\to M$ is an additive map that satisfies the Leibniz rule $\de(r_1r_2)=r_1\de(r_2)+r_2\de(r_1)$. Notice that it is necessary to specify an $R$-module structure on $M$ for the Leibniz rule to make sense.

Let $L$ be any differential field (e.g., $F$ or $\bar F$).  
Given an affine variety $X$ over $L$, together with an $L$-algebra $A$ and an $A$-point $x\in X(A)$, the map $L[X] \to A$ given by $f\mapsto f(x)$ defines an $L[X]$-module structure on $A$.  A {\em point derivation $\dt:L[X]\rightarrow A$ at $x$} is a derivation (with respect to this $L[X]$-module structure) that extends $\de:L\to L$.

Let $G$ be a linear algebraic group over $K$ and let $X$ be a $G_F$-torsor.  Consider an element $x\in X(\Fa)$ and a point derivation $\dt\colon F[X]\to\Fa$ at $x$. If $R$ is an $\Fa$-algebra and $y\in X(R)$, we may define a point derivation $\dt^y\colon F[X]\to R$ at $y$ as follows.
Let $g\in G(R)=G_\Fa(R)=\Hom_\Fa(\Fa\otimes_K K[G],R)$ be the unique element such that $y=x.g$. Now define $\dt^y$ as the composition 
$$\dt^y\colon F[X]\xrightarrow{\rho}F[X]\otimes_F F[G]=F[X]\otimes_K K[G]\xrightarrow{\dt\otimes \operatorname{id}_{K[G]}}\Fa\otimes_K K[G]\xrightarrow{g} R.$$
Note that $\dt\otimes \operatorname{id}_{K[G]}$ is a derivation, where $\Fa\otimes_K K[G]$ is considered as a $F[X]\otimes_K K[G]$-module via $F[X]\otimes_K K[G]\xrightarrow{x\otimes\id_{K[G]}}\Fa\otimes_K K[G]$. Therefore
$\dt^y$ is a derivation with respect to the $F[X]$-module structure on $R$ given by $g\circ(x\otimes \id_{K[G]})\circ\rho=x.g=y\in G(R)=\Hom_F(F[X],R)$.

For $\tau\in\Gamma:=\operatorname{Gal}(\Fa/F)$, the map $\tau(\dt):=\tau\circ\dt\colon F[X]\to\Fa$ is a point derivation at $\tau(x)$. We call $\dt$ {\em Galois-equivariant} if $(\tau(\dt))^x=\dt$ for all $\tau \in \Gamma$.

The next lemma lists some basic properties of point derivations.

\begin{lem}\label{point-derivation-properties}
With notation as above, suppose $\dt: F[X]\to \Fa$ is a point derivation at $x\in X(\Fa)$. 
Then the following hold:
\renewcommand{\theenumi}{(\alph{enumi})}
\renewcommand{\labelenumi}{(\alph{enumi})}
\begin{enumerate}
		\item \label{point lem a} $\dt^x=\dt$. 
		\item \label{point lem b} If $y,z\in X(\Fa)$ then $(\dt^y)^z=\dt^z$.
		\item \label{point lem c} If $\dt$ is Galois-equivariant and $y\in X(\Fa)$, then $\tau(\dt)^y=\dt^y$ for all $\tau \in \Gamma$.
		\item \label{point lem d} For $y\in X(\Fa)$ and $\tau\in \Gamma$ we have $\tau(\dt)^{\tau(y)}=\tau(\dt^y)$. In particular, $\tau(\dt^{\tau^{-1}(y)})=\tau(\dt)^{\tau(\tau^{-1}(y))}=\tau(\dt)^y$. 
	\end{enumerate}
\end{lem}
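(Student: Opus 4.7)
The plan is to unwind the defining formula
$$\dt^y=g\circ(\dt\otimes\id_{K[G]})\circ\rho,$$
where $g\in G(\Fa)$ is the unique element with $y=x.g$, and to reduce each of (a)--(d) to Hopf-algebra axioms for the coaction $\rho$ of $K[G]$ on $F[X]$: the counit identity $(\id_{F[X]}\otimes\epsilon)\circ\rho=\id_{F[X]}$, coassociativity $(\rho\otimes\id_{K[G]})\circ\rho=(\id_{F[X]}\otimes\Delta)\circ\rho$, and the convolution formula $(g_1g_2)(b)=\sum g_1(b_{(1)})g_2(b_{(2)})$ that encodes the group law on $G(\Fa)=\Hom_K(K[G],\Fa)$.

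For (a), the element $g\in G(\Fa)$ with $x=x.g$ is the identity of $G(\Fa)$, i.e.\ the counit $\epsilon\colon K[G]\to K\hookrightarrow\Fa$. Writing $\rho(f)=\sum a_i\otimes b_i$, the counit axiom gives $f=\sum a_i\epsilon(b_i)$ in $F[X]$; since each $\epsilon(b_i)\in K$ is a constant for $\de$, this yields $\dt(f)=\sum \epsilon(b_i)\dt(a_i)$, which is exactly $\dt^x(f)$. For (b), write $y=x.g_1$ and $z=y.g_2$, so that $z=x.(g_1g_2)$; unfolding the definitions gives
$$(\dt^y)^z=g_2\circ(g_1\otimes\id)\circ\bigl((\dt\otimes\id)\otimes\id\bigr)\circ(\rho\otimes\id)\circ\rho,$$
and coassociativity rewrites the right-most composition as $(\id\otimes\Delta)\circ\rho$. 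Collapsing $g_2\circ(g_1\otimes\id)\circ(\id\otimes\Delta)$ into $g_1g_2$ via the convolution formula for the group law then yields $\dt^z$.

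Part (c) is immediate from (b) applied to $\sigma:=\tau(\dt)$, which is a point derivation at $\tau(x)$: Galois-equivariance gives $\sigma^x=\dt$, and then (b) yields $\sigma^y=(\sigma^x)^y=\dt^y$. For (d), the $F$-rationality of the torsor action gives $\tau(x.g)=\tau(x).\tau(g)$ whenever $y=x.g$; a direct check on pure tensors shows
$$\tau(g)\circ\bigl((\tau\circ\dt)\otimes\id\bigr)\circ\rho=\tau\circ g\circ(\dt\otimes\id)\circ\rho,$$
which is exactly the equality $\tau(\dt)^{\tau(y)}=\tau(\dt^y)$. The ``in particular'' statement follows by substituting $\tau^{-1}(y)$ for $y$.

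The main source of friction, rather than any genuine difficulty, is keeping the shifting module structures straight: the $F[X]$-module structure on $\Fa$ depends on the chosen base point, and when tensoring $\dt$ with the identity one must use the corresponding $F[X]\otimes_K K[G]$-module structure on $\Fa\otimes_K K[G]$ given by $x\otimes\id_{K[G]}$. With that bookkeeping in place, each item reduces to a short diagram chase.
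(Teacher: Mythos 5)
Your proof is correct and follows essentially the same route as the paper's: part (a) via the counit axiom, part (b) via coassociativity of the coaction together with the convolution description of the group law, part (c) as a formal consequence of (a), (b), and Galois-equivariance, and part (d) by the $\Gamma$-equivariance of $\rho$ and a check on pure tensors. The only difference is that you phrase the diagram chases in Sweedler-style element notation rather than as commutative diagrams, which is purely cosmetic.
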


\begin{proof}
To prove \ref{point lem a} note that $x=x.1$. The assertion now follows from the fact that the two inner diagrams in
	$$
	\xymatrix{
		F[X] \ar^-\rho[r] \ar_{\id_{F[X]}}[rd] & F[X]\otimes_K K[G] \ar^-{\dt\otimes \operatorname{id}_{K[G]}}[rr] \ar^{\id_{F[X]}\cdot 1}[d] & & \Fa\otimes_K K[G] \ar^{\id_{\Fa}\cdot 1}[d] \\
		& F[X]\ar^\dt[rr] & & \Fa
	}
	$$
	commute, where $1$ denotes evaluation at $1 \in G(K)$.

	To prove \ref{point lem b}, first note that the two inner diagrams in
	$$
	\xymatrix{
		F[X]  \ar^\rho[rr] \ar_-\rho[d] & & F[X]\otimes_K K[G] \ar^{\dt\otimes \id_{K[G]}}[rr] \ar_{\id_{F[X]}\otimes\Delta}[d] & & \Fa\otimes_K K[G] \ar^{\id_{\Fa}\otimes\Delta}[d] \\
		F[X]\otimes_K K[G] \ar^-{\rho\otimes\id_{K[G]}}[rr] & & F[X]\otimes_K K[G] \otimes_K K[G] \ar^-{\dt\otimes \id_{K[G]\otimes K[G]}}[rr] & & \Fa\otimes_K K[G]\otimes_K K[G]	
	}
	$$
	commute. Therefore the outer rectangle also commutes. Let $g\in G(\Fa)$ such that $y=x.g$ and let $h\in G(\Fa)$ such that $z=y.h$. Then $z=x.gh$; and we see that $\dt^z$ is the upper right path 
from $F[X]$ to $\Fa\otimes_K K[G]\otimes_K K[G]$	
composed with $g\circ (h\otimes \operatorname{id})\colon \Fa\otimes_K K[G]\otimes_K K[G]	\to \Fa$, whereas $(\dt_y)^z$ is the lower left path composed with $g\circ (h\otimes \operatorname{id})$. This proves \ref{point lem b}.
	
	To prove \ref{point lem c}, note that by Galois-equivariance and \ref{point lem a} we have
	$\tau(\dt)^x=\dt^x$. Therefore $\tau(\dt)^y=(\tau(\dt)^x)^y=(\dt^x)^y=\dt^y$ by \ref{point lem b}.
	
	Finally for \ref{point lem d}, note that if $y=x.g$, then $\tau(y)=\tau(x).\tau(g)$. 
Since the diagram
	$$
	\xymatrix{
		F[X]\otimes_K K[G] \ar^-{\dt\otimes \id_{K[G]}}[rr] \ar_{\tau(\dt)\otimes\operatorname{id}_{K[G]}}[d]
		& & \Fa\otimes_K K[G] \ar^g[d] \\
		\Fa\otimes_K K[G] \ar^{\tau(g)}[rrd] & & \Fa \ar^\tau[d] \\
		& & \Fa
	}
	$$
	commutes, the claim follows.
\end{proof}

If $X$ is a $G_F$-torsor as above and $\de:F[X]\rightarrow F[X]$ is a derivation that extends the given derivation $\de:F \to F$, then there is an {\em induced} point derivation $\de_x$ at $x \in X(\Fa)$ obtained by evaluation at $x$; i.e., $\de_{x}\colon F[X]\xrightarrow{\de} F[X]\xrightarrow{x}\Fa$. 

The following explains the connection between differential structures on a torsor and point derivations.

\begin{prop} \label{prop: differential structure at point}
Let $x\in X(\Fa)$ be a fixed element. The assignment $\de\mapsto \de_x$ induces a bijection between the derivations $\del\colon F[X]\to F[X]$ that endow $X$ with the structure of a differential $G_F$-torsor and the point derivations $\widetilde{\del}\colon F[X]\to\Fa$ at $x$ that are Galois-equivariant.  In particular, $(\de_x)^y=\de_y$ for all $y \in Y(\Fa)$.
\end{prop}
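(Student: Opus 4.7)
The plan is to establish the ``in particular'' identity $(\de_x)^y = \de_y$ first, because both halves of the bijection then reduce to formal applications of Lemma \ref{point-derivation-properties}. Given a derivation $\de$ endowing $X$ with the structure of a differential $G_F$-torsor, writing $y = x.g$ with $g\in G(\Fa)$ and $\rho(f) = \sum f_i\otimes g_i\in F[X]\otimes_K K[G]$, the identity reduces to the comparison
\[
(\de_x)^y(f) = g\circ(\de_x\otimes \id_{K[G]})\circ\rho(f),\qquad \de_y(f) = g\circ(x\otimes\id_{K[G]})\circ\rho(\de f),
\]
and these agree precisely because the differential torsor condition gives $\rho\circ\de = (\de\otimes\id_{K[G]})\circ\rho$.

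For the forward map $\de\mapsto\de_x = x\circ\de$, I would observe that $\tau(\de_x) = (\tau\circ x)\circ\de = \de_{\tau(x)}$, and then use the identity just established together with Lemma \ref{point-derivation-properties}\ref{point lem a},\ref{point lem b} to compute
\[
(\tau(\de_x))^x = (\de_{\tau(x)})^x = ((\de_x)^{\tau(x)})^x = (\de_x)^x = \de_x,
\]
which is Galois-equivariance. Injectivity is then immediate: if $\de_x = \de'_x$, the identity forces $\de_y = \de'_y$ for every $y\in X(\Fa)$, so $\de f$ and $\de' f$ coincide at every $\Fa$-point of $X$ and hence $\de = \de'$.

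The substantive step is surjectivity. Given a Galois-equivariant point derivation $\dt$ at $x$, I would define $\de(f)\colon X(\Fa)\to\Fa$ pointwise by $y\mapsto \dt^y(f)$. With $\rho(f)=\sum f_i\otimes g_i$ and $y=x.g$, the explicit formula $\dt^y(f)=\sum \dt(f_i)\, g_i(g)$ exhibits $\de(f)$ as a regular function on $X_\Fa$, i.e.\ an element of $\Fa[X]$. The main obstacle is showing that $\de(f)$ descends to $F[X]$, and this is precisely where Galois-equivariance enters: Lemma \ref{point-derivation-properties}\ref{point lem d} and \ref{point lem c} yield
\[
\tau\bigl(\dt^{\tau^{-1}(y)}(f)\bigr) = (\tau(\dt))^y(f) = \dt^y(f),
\]
which is the $\Gamma$-invariance of $\de(f)$ viewed as a function on $X(\Fa)$. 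The Leibniz rule and the extension property for $\de$ then transfer pointwise from the corresponding properties of the $\dt^y$, and the differential torsor condition $\rho\circ\de=(\de\otimes\id_{K[G]})\circ\rho$ can be verified on $\Fa$-points of $X\times G$ by using Lemma \ref{point-derivation-properties}\ref{point lem b} to rewrite both sides as $\dt^{y.h}(f)$. Finally $\de_x = \dt^x = \dt$ by Lemma \ref{point-derivation-properties}\ref{point lem a}, so the two constructions are mutually inverse.
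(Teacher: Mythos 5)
Your proposal is correct and follows essentially the same route as the paper's proof: the same reduction to Lemma~\ref{point-derivation-properties}, the same explicit formula for $\dt^y$ in terms of $\rho(f)$, the same use of Galois-equivariance to descend $\de(f)$ from $\Fa[X]$ to $F[X]$, and the same mutual-inverse check via $\dt^x=\dt$. The only differences are organizational — you establish $(\de_x)^y=\de_y$ first and deduce Galois-equivariance of $\de_x$ formally from it rather than by the paper's diagram chase, you construct $\de(f)$ by its values on $\Fa$-points rather than via the generic point $y_0\in X_\Fa(\Fa[X])$, and you verify the co-action compatibility directly on points instead of invoking Lemma~\ref{lem: invariant derivation} — but the underlying computations are identical.
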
  

\begin{proof}
	Given a derivation $\de$ on $F[X]$ which endows $X$ with the structure of a differential $G_F$-torsor, let $\dt=\de_x$. This is a point derivation at $x$ by definition. To see that $\dt$ is Galois-equivariant, let $\tau\in\Gamma$ and let $g\in G(\Fa)$ be such that $x=\tau(x).g$. The square in
	$$
	\xymatrix{
		F[X] \ar^-\rho[r] \ar_\de[d] & F[X]\otimes_K K[G] \ar^{\de\otimes \id_{K[G]}}[d]  & \\
		F[X] \ar^-\rho[r] & F[X]\otimes_K K[G] \ar^-{\tau(x)}[r] & \Fa\otimes_K K[G] \ar^-g[r] & \Fa		
	}
	$$
	commutes. The upper path all the way to $\Fa$ is $\tau(\dt)^x$ and the lower path is $\dt$ because $x=\tau(x).g$. Thus $\dt$ is Galois-equivariant.
	
Conversely, given a point derivation $\dt$ that is Galois-equivariant, we 
want to associate to $\dt$ a derivation on $F[X]$ that extends $\de$ on $F$.
Let $j:\Fa[X]\to\Fa[G]$ correspond to the scheme isomorphism $G_\Fa \to X_\Fa$ defined by $g \mapsto x.g$;
and let $\id \in \Hom_{\Fa}(\Fa[X],\Fa[X]) = X_{\Fa}(\Fa[X])$ be the identity map on $\bar F[X]$, 
corresponding to an $\bar F[X]$-point $y_0$ on $X$.  Consider the point derivation
$\dt^{y_0}:F[X] \to \Fa[X]$.  If $f \in F[X]$, then $\dt^{y_0}(f) \in \bar F[X]$ defines a morphism 
$\de(f):X_{\Fa}\to\mathbb{A}_\Fa^1$.  
There is a unique $g_0 \in G(\Fa[X])$ satisfying $x.g_0 = y_0$; i.e., such that 
the composition 
$\Fa[X] \xrightarrow{j} \Fa[G] \xrightarrow{g_0} \Fa[X]$ is equal to $\id:\Fa[X] \to \Fa[X]$.
Thus $j=g_0^{-1}$.

We claim that if $R$ is any $\Fa$-algebra and if $y\in X_\Fa(R)$, then
$\de(f)(y)=\dt^y(f)$.  By the definition of $\dt^y$, it suffices to show that the composition
$\Fa[G] \xrightarrow{g_0} \Fa[X] \xrightarrow{y} R$ is equal to $g:\Fa[G] \to R$, where $g \in G(R)$ is the element satisfying $x.g = y$.  This last equality says that the composition 
$\Fa[X] \xrightarrow{j} \Fa[G] \xrightarrow{g} R$ is equal to $y:\Fa[X] \to R$.  Hence $g = y \circ g_0:\Fa[G] \to R$, proving the claim.

Using the Galois-equivariance of $\dt$, we will check that $\de(f)$ is in fact in $F[X]$. For this, it suffices to show that $\de(f)\in F[X]\otimes_F\Fa$ is fixed by the $\Gamma$-action. For $y\in X(\Fa)$ and $\tau\in\Gamma$ we find
	$$\tau(\de(f))(y)=\tau(\de(f)(\tau^{-1}(y)))=\tau(\dt^{\tau^{-1}(y)}(f))=\tau(\dt^{\tau^{-1}(y)})(f)=\dt^y(f)=\de(f)(y)$$
	using Lemma~\ref{point-derivation-properties}\ref{point lem c} and~\ref{point lem d}. Thus $\de(f)\in F[X]$ and we have constructed a well-defined map
	$$\de\colon F[X]\to F[X],$$
as desired.
It is now straightforward to check that $\de$ is a derivation and extends $\de:F\rightarrow F$.
	
	Our next goal is to show that $\de\colon F[X]\to F[X]$ endows $X$ with the structure of a differential $G$-torsor using Lemma \ref{lem: invariant derivation}. For $g\in G(\Ka)$ and $y\in X(\Fa)$ the square in
	$$
	\xymatrix{
		F[X] \ar^-\rho[r] & F[X]\otimes_K K[G] \ar^-{\id_{F[X]}\otimes g}[rr] \ar_{\dt_y\otimes \id_{K[G]}}[d] & & F[X]\otimes_K \Ka \ar^{\dt_y\otimes \id_{\Ka}}[d] & \\
		& \Fa\otimes_K K[G] \ar^-{\id_{\Fa}\otimes g}[rr] & & \Fa\otimes_K \Ka \ar[r] & \Fa
	}
	$$
	commutes. Tracking an element $f\in F[X]$ from the upper left to the lower right along both paths, we find that
	$$\dt^{y.g}(f)=\dt^y(g(f)),$$
	where we extend $\dt^y\colon F[X]\to \Fa$ to $\dt^y\colon F[X]\otimes_K \Ka\to \Fa$  by $\Ka$-linearity. Now
	$$g(\de(f))(y)=\de(f)(y.g)=\dt^{y.g}(f)=\dt^y(g(f))=\de(g(f))(y).$$
	Hence by Lemma \ref{lem: invariant derivation}, $X$ is a differential $G_F$-torsor with respect to $\de$.

	It remains to see that the constructed maps are inverse to each other.
	If we start with $\dt\colon F[X]\to \Fa$ and construct $\de\colon F[X]\to F[X]$ as above, then $\de(f)(x)=\dt^x(f)=\dt(f)$ for $f\in F[X]$ by Lemma~\ref{point-derivation-properties}\ref{point lem a}. So $\de_x$ is the given $\til \de$.
	
	Conversely, if we start with $\de\colon F[X]\to F[X]$ and define $\dt:=\de_x$, it remains to be seen that $\dt^y(f)=\de(f)(y)$ for all $f\in F[X]$ and $y\in X(\Fa)$. 
That is, we want to check that $(\de_x)^y=\de_y$.  Let $g\in G(\Fa)$ be such that $y=x.g$. Since the two inner diagrams in	
	$$
	\xymatrix{
		F[X] \ar^-\rho[r] \ar_\de[d] & F[X]\otimes_K K[G] \ar^{\de\otimes \id_{K[G]}}[d]   \\
		F[X] \ar^-\rho[r] \ar_-y[ddr] & F[X]\otimes_K K[G] \ar^{x\otimes\id_{K[G]}}[d] \\
		& \Fa \otimes_K K[G] \ar^{g}[d] \\
		& \Fa
	}
	$$
	commute, the outer diagram also commutes. This shows that $(\de_x)^y=\de_y$.
\end{proof}

\subsection{Transport of differential structures}

In this subsection, we study how differential structures behave under morphisms of torsors.

\begin{Def} \label{def: differential morphism}
	Let $\phi\colon G\to G'$ be a morphism of linear algebraic groups over $K$,  let $X$ be a differential $G_F$-torsor, and let $X'$ be a differential $G'_F$-torsor. A $G_F$-equivariant morphism $\psi\colon X\to X'$ is called \emph{differential} if the corresponding dual morphism $\psi^*\colon F[X']\to F[X]$ is differential (i.e., commutes with the derivation).
\end{Def}

The following lemma gives a local criterion for a morphism to be differential.

\begin{lem} \label{lemma: differential iff differential at point} Consider the situation of Definition \ref{def: differential morphism}, and fix $x \in X(\Fa)$. The 
morphism $\psi$ is differential if and only if  
	\begin{equation} \label{eqn: diagram for delta}
	\xymatrix{
		F[X'] \ar^{\psi*}[rr] \ar_-{\de_{\psi(x)}}[rd] & & F[X]  \ar^-{\de_x}[ld] \\
		& \Fa &	
	}	
	\end{equation}
	commutes (for this fixed $x$).
\end{lem}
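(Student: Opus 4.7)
\smallskip\noindent
\textbf{Proof plan.} The ``only if'' direction is immediate: if $\psi^*$ commutes with the derivations, then composing the identity $\de\circ\psi^* = \psi^*\circ\de$ with the evaluation $x\colon F[X]\to\Fa$ yields
$\de_x\circ\psi^* = x\circ\psi^*\circ\de = \psi(x)\circ\de = \de_{\psi(x)}$,
which is exactly the commutativity of~\eqref{eqn: diagram for delta}.

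For the converse, the plan is to use Proposition~\ref{prop: differential structure at point} to bootstrap the hypothesized pointwise identity at $x$ into the global identity $\de\circ\psi^* = \psi^*\circ\de$ of maps $F[X']\to F[X]$. Evaluating both sides at an arbitrary $y\in X(\Fa)$ and applying them to $f'\in F[X']$ gives $\de_y(\psi^*(f'))$ on the left and (using $\psi(y) = y\circ\psi^*$) $\de_{\psi(y)}(f')$ on the right. It therefore suffices to show
\[
\de_y\circ\psi^* \;=\; \de_{\psi(y)} \qquad \text{for every } y\in X(\Fa).
\]
By Proposition~\ref{prop: differential structure at point} applied to both $X$ and $X'$, this is equivalent to
\[
(\de_x)^y \circ \psi^* \;=\; (\de_{\psi(x)})^{\psi(y)}.
\]

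To verify the latter, I would write $y=x.g$ for the unique $g\in G(\Fa)$, so that $\psi(y)=\psi(x).\phi(g)$ by $G_F$-equivariance, with dual coaction compatibility $\rho_X\circ\psi^* = (\psi^*\otimes\phi^*)\circ\rho_{X'}$, where $\phi^*\colon K[G']\to K[G]$ is the comorphism of $\phi$. Substituting this into the defining composition
$(\de_x)^y = g \circ (\de_x\otimes\id_{K[G]}) \circ \rho_X$
and using $g\circ(\id_\Fa\otimes\phi^*)=\phi(g)$ as maps $\Fa\otimes_K K[G']\to\Fa$, the left hand side rewrites as
$\phi(g) \circ ((\de_x\circ\psi^*)\otimes\id_{K[G']}) \circ \rho_{X'}$, which by the hypothesis $\de_x\circ\psi^* = \de_{\psi(x)}$ is precisely $(\de_{\psi(x)})^{\psi(y)}$. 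The only real obstacle is keeping the various coaction diagrams aligned; once the dual form of $G$-equivariance is written down, the chase is routine and entirely parallel to those performed in the proof of Proposition~\ref{prop: differential structure at point}.
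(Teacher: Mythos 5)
Your proposal is correct and follows essentially the same route as the paper: both reduce the global identity $\de\circ\psi^*=\psi^*\circ\de$ to the pointwise identities $\de_y\circ\psi^*=\de_{\psi(y)}$ for all $y\in X(\Fa)$, and then propagate the hypothesis from the fixed $x$ to an arbitrary $y=x.g$ via Proposition~\ref{prop: differential structure at point} (i.e.\ $(\de_x)^y=\de_y$, $(\de_{\psi(x)})^{\psi(y)}=\de_{\psi(y)}$) together with the dual form of $G$-equivariance $\rho_X\circ\psi^*=(\psi^*\otimes\phi^*)\circ\rho_{X'}$ and the identity $g\circ(\id_\Fa\otimes\phi^*)=\phi(g)$. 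The paper packages this chase as a single three-tier commutative diagram, but the computation is identical.
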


\begin{proof}
Unraveling the definitions shows that $\psi$ is differential if and only if the diagram~(\ref{eqn: diagram for delta}) commutes {\em for all} $x\in X(\Fa)$. It remains to show that the commutativity for one fixed $x$ is sufficient. For $\tilde{x}\in X(\Fa)$, there exists $g\in G(\Fa)$ such that $\tilde{x}=x.g$; and then $\psi(\tilde{x})=x'.\phi(g)$, where $x'=\psi(x)$.
	Since the three inner diagrams in	
	$$
	\xymatrix{
		F[X'] \ar^{\psi^*}[rr] \ar[d] & & F[X] \ar[d] \\
		F[X']\otimes_K K[G'] \ar^{\psi^*\otimes\phi^*}[rr] \ar_{\de_{x'}\otimes \id_{K[G']}}[d] & & F[X]\otimes_K K[G] \ar^{\de_x\otimes \id_{K[G]}}[d] \\
		\Fa\otimes_K K[G'] \ar^{\id_{\Fa}\otimes\phi^*}[rr] \ar_{\phi(g)}[rd] & & \Fa\otimes_K K[G] \ar^g[ld] \\
		& \Fa &
	}
	$$
	commute, the outer diagram also commutes. That is,
	$$
	\xymatrix{
		F[X'] \ar^{\psi^*}[rr]  \ar_-{(\de_{x'})^{\psi(\tilde{x})}}[rd] & & F[X] \ar^{(\de_x)^{\tilde{x}}}[ld] \\
		& \Fa &
	}
	$$
commutes. By Proposition~\ref{prop: differential structure at point}, 
$(\de_{x'})^{\psi(\tilde{x})}=\de_{\psi(\tilde x)}$ and $(\de_x)^{\tilde x}=\de_{\tilde{x}}$, hence both paths from the upper left to the lower right in 
	$$
	\xymatrix{
		F[X'] \ar^{\psi^*}[r] \ar_{\de}[d] & F[X] \ar^{\de}[d]\\
		F[X'] \ar^{\psi^*}[r] & F[X] \ar^{\tilde{x}}[d] \\
		& \Fa	
	}
	$$
	yield the same result; i.e., $\psi^*(\de(f))(\tilde{x})=\de(\psi^*(f))(\tilde{x})$ for all $f\in F[X']$ and $\tilde{x}\in X(\Fa)$. Thus $\psi$ is differential.
\end{proof}

\begin{prop} \label{prop: moving differential structures}
	Let $\phi\colon G\to G'$ be a morphism of linear algebraic groups over $K$ and let $\psi\colon X\to X'$ be a $G_F$-equivariant morphism from a $G_F$-torsor $X$ to a $G'_F$-torsor $X'$ (where $G$ acts on $X'$ via $\phi$). Then:
\renewcommand{\theenumi}{(\alph{enumi})}
\renewcommand{\labelenumi}{(\alph{enumi})}
	\begin{enumerate}
		\item \label{moving a}
		If $X$ is differential, then there exists a unique differential structure on $X'$ such that $\psi$ is differential.
		\item \label{moving b} 
		If $X'$ is differential, $K$ is algebraically closed, and $\phi$ is surjective, then there exists a differential structure on $X$ such that $\psi$ is differential. 
	\end{enumerate}		
\end{prop}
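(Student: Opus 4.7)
The plan is to reduce both parts to the local characterization of differential structures via Galois-equivariant point derivations in Proposition~\ref{prop: differential structure at point}, combined with the local criterion in Lemma~\ref{lemma: differential iff differential at point}. Fix an $\Fa$-point $x\in X(\Fa)$ (which exists in part (b) since $\Fa$ is algebraically closed, so any $G_\Fa$-torsor is trivial), and set $x':=\psi(x)\in X'(\Fa)$. The key auxiliary identity I would establish first, by a short diagram chase from the equivariance relation $(\psi^*\otimes\phi^*)\circ\rho'=\rho\circ\psi^*$, is that for any point derivation $\eta\colon F[X]\to\Fa$ and any $y\in X(\Fa)$,
\[
\eta^y\circ\psi^*=(\eta\circ\psi^*)^{\psi(y)}.
\]

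For part (a), let $\del_x$ be the Galois-equivariant point derivation at $x$ associated (via Proposition~\ref{prop: differential structure at point}) to the given differential structure on $X$, and define $\dt':=\del_x\circ\psi^*\colon F[X']\to\Fa$. It is immediate that $\dt'$ is a point derivation at $x'$ extending $\del$. Applying the boxed identity to $\eta=\tau\del_x$ and $y=x$, and using Galois-equivariance of $\del_x$, one obtains $(\tau\dt')^{x'}=(\tau\del_x)^x\circ\psi^*=\del_x\circ\psi^*=\dt'$, so $\dt'$ is Galois-equivariant. Proposition~\ref{prop: differential structure at point} then yields a unique differential structure $\del'$ on $X'$ with $\del'_{x'}=\dt'=\del_x\circ\psi^*$, and Lemma~\ref{lemma: differential iff differential at point} shows $\psi$ is differential. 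Uniqueness follows from the bijection in Proposition~\ref{prop: differential structure at point}: any differential structure on $X'$ making $\psi$ differential must induce the same point derivation at $x'$.

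For part (b), let $\dt'_{x'}$ be the Galois-equivariant point derivation at $x'$ associated to $\del'$. We seek a Galois-equivariant point derivation $\dt\colon F[X]\to\Fa$ at $x$ extending $\del$ with $\dt\circ\psi^*=\dt'_{x'}$; Proposition~\ref{prop: differential structure at point} and Lemma~\ref{lemma: differential iff differential at point} then finish the proof. Since $\phi$ is surjective, $\psi$ is a $\ker(\phi)_F$-torsor and hence smooth, so the standard splitting of the relative cotangent sequence produces at least one such lift $\dt_0$. Let $E_x$ denote the set of lifts; it is a torsor under $V_x:=\operatorname{Der}_{F[X']}(F[X],\Fa)\cong T_x(X_{x'})$, and the formula $\tau\cdot\dt:=(\tau\dt)^x$ defines a $\Gamma$-action on $E_x$ (preservation of $E_x$ uses the boxed identity together with Galois-equivariance of $\dt'_{x'}$). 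A Galois-equivariant lift exists if and only if this torsor has a $\Gamma$-fixed point, which is controlled by $H^1(\Gamma,V_x)$.

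The main obstacle is to show $H^1(\Gamma,V_x)=0$. Since $K=\Ka$, the group $\ker(\phi)$ is defined over $K\subset F$, and the trivialization $\ker(\phi)_\Fa\to X_{x',\Fa}$, $n\mapsto x.n$, identifies $V_x$ with $\operatorname{Lie}(\ker\phi)\otimes_K\Fa$. A direct computation using the definition of $(\cdot)^x$ and the normality of $\ker(\phi)$ in $G$ shows that the induced $\Gamma$-action is semilinear over $\Fa$, of the twisted form $\tau\cdot\lambda=\operatorname{Ad}(h_\tau^{-1})\tau(\lambda)$, where $h_\tau\in G(\Fa)$ satisfies $x=\tau(x).h_\tau$. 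Thus $V_x$ is a finite-dimensional semilinear $\Gamma$-module over $\Fa$; by Galois descent combined with additive Hilbert~90 ($H^1(\Gamma,\Fa)=0$ via the normal basis theorem), we conclude $H^1(\Gamma,V_x)=0$. A Galois-equivariant $\dt\in E_x$ therefore exists, producing via Proposition~\ref{prop: differential structure at point} a differential structure $\del$ on $X$ with $\del_x=\dt$, which by Lemma~\ref{lemma: differential iff differential at point} makes $\psi$ differential.
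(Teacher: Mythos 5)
Your proof of part \ref{moving a} is correct and coincides with the paper's: both define $\de_{x'}:=\de_x\circ\psi^*$, verify Galois-equivariance by the same diagram chase (your ``boxed identity'' is exactly the commuting outer rectangle in the paper's argument), and conclude via Proposition~\ref{prop: differential structure at point} and Lemma~\ref{lemma: differential iff differential at point}. For part \ref{moving b} your argument is also correct but organized differently. The paper proceeds in two stages: first the algebraically closed case, where differential structures are identified with tangent vectors at a rational point and a lift is produced using surjectivity of $T_xX\to T_{x'}X'$ (coming from surjectivity of $\phi$ on Lie algebras); then descent from $\Fa$ to $F$ by restricting the resulting derivation to $E[X]$ for a finite Galois extension $E/F$ and averaging over $\Galf(E/F)$, with $G$-invariance of the average checked via Corollary~\ref{cor: invariant derivation} (this is where the paper invokes $K=\Ka$, to make the $G(K)$-action commute with the Galois action). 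You instead stay entirely at the level of point derivations at a single $x\in X(\Fa)$: the set of lifts of $\de_{\psi(x)}$ is an affine space under the relative tangent space $V_x$, carries a $\Gamma$-action affine over a semilinear action on $V_x$, and a Galois-equivariant lift exists because $H^1(\Gamma,V_x)=0$. The two descents are the same mechanism in different clothing --- your cohomology vanishing is witnessed precisely by the averaging the paper performs explicitly, and in characteristic zero one does not even need the semilinear/Hilbert~90 structure, only that $V_x$ is a $\Q$-vector space and the cocycle factors through a finite quotient. What your version buys is that it never leaves the bijection of Proposition~\ref{prop: differential structure at point}, so the $G$-invariance of the descended derivation comes for free rather than needing a separate verification. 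Two small points you should make explicit: (i) the cocycle $\tau\mapsto\tau\cdot\dt_0-\dt_0$ must be shown to factor through a finite quotient of the profinite group $\Gamma$ (immediate since $F[X]$ is finitely generated, so $\dt_0$ and $x$ are defined over a finite extension), so that the relevant $H^1$ is a colimit of cohomology of finite groups; and (ii) the existence of the initial lift $\dt_0$ should be argued as in the paper, by first producing some point derivation at $x$ extending $\de$ (e.g.\ from the trivialization $\Fa[X]\cong\Fa\otimes_KK[G]$) and then lifting the $F$-linear discrepancy through the surjection of relative tangent spaces, since the cotangent-sequence splitting by itself only handles $F$-linear derivations.
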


\begin{proof}
	For part \ref{moving a}, let $x\in X(\Fa)$ and set $x'=\psi(x)\in X'(\Fa)$. Define $\de_{x'}:=\de_x\circ \psi^*:F[X']\rightarrow \Fa$.
	 We next show that $\de_{x'}$ is Galois-equivariant. If $\tau\in\Gamma$ and $g\in G(\Fa)$ such that $x=\tau(x).g$, then $x'=\tau(x').\phi(g)$.
	Since the three inner diagrams in	
		$$
		\xymatrix{
			F[X'] \ar^{\psi^*}[rr] \ar[d] & & F[X] \ar[d] \\
			F[X']\otimes_K K[G'] \ar^{\psi^*\otimes\phi^*}[rr] \ar_{\tau(\de_{x'})\otimes \id_{K[G']}}[d] & & F[X]\otimes_K K[G] \ar^{\tau(\de_x)\otimes \id_{K[G]}}[d] \\
			\Fa\otimes_K K[G'] \ar^{\id_{\Fa}\otimes\phi^*}[rr] \ar_{\phi(g)}[rd] & & \Fa\otimes_K K[G] \ar^g[ld] \\
			& \Fa &
		}
		$$
		commute, the outer diagram also commutes. So $\tau(\de_{x'})^{x'}=\tau(\de_x)^x\circ\psi^*=\de_x\circ\psi^*=\de_{x'}$, using that $\de_x$ is Galois-equivariant by Proposition~\ref{prop: differential structure at point}. Thus $\de_{x'}$ is Galois-equivariant, as asserted.  Hence by Proposition~\ref{prop: differential structure at point}, $\de_{x'}$ defines a differential structure on $X'$. By Lemma~\ref{lemma: differential iff differential at point} the morphism $\psi$ is differential. The uniqueness is clear from Proposition \ref{prop: differential structure at point} and Lemma~\ref{lemma: differential iff differential at point}.	
	
	\medskip
	
	To prove \ref{moving b}, we first assume that $F=\Fa$ is algebraically closed. Let $x\in X(F)$. Since $F$ is algebraically closed, every point derivation at $x$ is Galois-equivariant.  Hence Proposition~\ref{prop: differential structure at point} implies that the differential structures on $X$ are in bijection with the set $\operatorname{Der}_{\de,x}(F[X],F)$ of point derivations at $x$. Let $M_x\subseteq F[X]$ be the maximal ideal corresponding to $x$ and let $\mathcal{O}_{X,x}=F[X]_{M_x}$, the local ring at $x$. Recall that the (Zariski) tangent space $T_x X$ at $x$ is the dual $F$-vector space of $\mathfrak{m}_x/\mathfrak{m}_x^2$ where $\mathfrak{m}_x\subseteq \mathcal{O}_{X,x}$ is the maximal ideal. 
We then have a chain of bijections:
		\begin{equation} \label{eqn: bijections for tangent space}
	\operatorname{Der}_{\de,x}(F[X],F)\cong\operatorname{Der}_{\de,x}(\mathcal{O}_{X,x},F)\cong T_x X.
	\end{equation}
(Note that while $T_xX$ is an $F$-vector space, $\operatorname{Der}_{\de,x}(F[X],F)$ is not closed under addition of derivations, since the sum of two derivations extending $\de\colon F\to F$ extends $2\de$ rather than $\de$.)
Although $\dt\in\operatorname{Der}_{\de,x}(\mathcal{O}_{X,x},F)$ need not be $F$-linear, the induced map $\mathfrak{m}_x/\mathfrak{m}_x^2\to F$ given by $f\!\mod \mathfrak{m}_x^2  \mapsto\dt(f)$ is well defined and $F$-linear by the Leibniz rule; and this gives the forward direction of the second bijection
in (\ref{eqn: bijections for tangent space}).  For the reverse direction, a tangent vector $v\colon 
	\mathfrak{m}_x/\mathfrak{m}_x^2\to F$ defines a derivation $\dt\colon \mathcal{O}_{X,x}=\mathfrak{m}_x\oplus F\to F$ in $\operatorname{Der}_{\de,x}(\mathcal{O}_{X,x},F)$ by $f\mapsto v((f-f(x))\!\mod \mathfrak{m}_x^2)+\de(f(x))$.
	
	Let $x'=\psi(x)\in X'(F)$ and define $\de_{x'}\colon F[X']\xrightarrow{\de}F[X']\xrightarrow{x'}F$. Since $\phi\colon G\to G'$ is surjective and the groups $G,G'$ are smooth, it follows from \cite[Chapter II, \S 5, Prop. 5.3]{DG} that the induced map on the Lie algebras is surjective. Since $F$ is assumed to be algebraically closed for now, $X$ and $X'$ are trivial torsors and it follows that the tangent map $T_xX\to T_{x'} X'$ is surjective. Therefore the image of $\de_{x'}$ in $T_{x'} X'$ lifts to a tangent vector in $T_xX$ which corresponds to a derivation $\de_x\colon F[X]\to F$. Chasing through the bijections in (\ref{eqn: bijections for tangent space}) we see that
	$$
	\xymatrix{
		F[X'] \ar^{\psi^*}[rr] \ar_-{\de_{x'}}[rd] & & F[X]  \ar^-{\de_x}[ld] \\
		& F &	
	}
	$$
	commutes. Thus, if we define a differential structure on $X$ via $\de_x$ by virtue of Proposition~\ref{prop: differential structure at point}, it follows from Lemma~\ref{lemma: differential iff differential at point} that $\psi$ is differential. This proves \ref{moving b} in case that $F$ is algebraically closed.
	
	Now let $F$ be arbitrary again. Since $\phi\colon G\to G'$ is surjective we can identify $F[X']$ with a subring of $F[X]$. As $X_\Fa$ is a differential $G_\Fa$-torsor, it follows from what we proved previously that there exists a $G$-invariant derivation
	$\de_{\Fa[X]}\colon \Fa[X]\to \Fa[X]$ that extends the given derivation $\de_{F[X']}\colon F[X']\to F[X']$.
	
	We next show that $\de_{\Fa[X]}\colon \Fa[X]\to \Fa[X]$ restricts to a $G$-invariant derivation $\de_{E[X]}\colon E[X]\to E[X]$, where $E$ is a suitable finite Galois extension of $F$. Namely, suppose that $f_1,\ldots,f_n\in F[X]$ generate $F[X]$ as an $F$-algebra. Then there exists a finite Galois extension $E$ of $F$ such that $\de_{\Fa[X]}(f_1),\ldots,\de_{\Fa[X]}(f_n)\in E[X]$ and thus $\de_{\Fa[X]}$ restricts to a derivation $\de_{E[X]}$ on $E[X]$.
	
For $\tau\in\Galf(E/F)$, the restriction of $\tau(\de_{E[X]})=\tau\circ \de_{E[X]}$ to $F[X]$ is a derivation from $F[X]$ into $E[X]$, where $E[X]$ is considered as an $F[X]$-module via the inclusion $F[X]\subseteq E[X]$. Moreover, $\tau(\de_{E[X]})$ agrees with $\de_{F[X']}$ on $F[X']$, and $\tau(\de_{E[X]})$ is $G$-invariant, since the $G(K)$-action commutes with the $\Galf(E/F)$-action (using here that $K$ is algebraically closed).
	
	This implies that $$\mathcal{R}(\de_{E[X]}):=\frac{1}{|\Galf(E/F)|}\sum_{\tau\in \Galf(E/F)}\tau(\de_{E[X]})$$
	defines a derivation from $F[X]$ into $E[X]$ that agrees with $\de_{F[X']}$ on $F[X']$.
	By definition, an element in the image of $\mathcal{R}(\de_{E[X]})$ is fixed by the $\Galf(E/F)$-action and thus lies in $F[X]$. Therefore $\mathcal{R}(\de_{E[X]})$ actually defines a derivation $\mathcal{R}(\de_{E[X]})\colon F[X]\to F[X]$. As $\tau(\de_{E[X]})$ is $G$-invariant for every $\tau\in\Galf(E/F)$, the derivation $\mathcal{R}(\de_{E[X]})$ is also $G$-invariant and thus defines the desired differential structure on $X$ by Corollary~\ref{cor: invariant derivation}.  Here $\psi$ is a differential morphism over $F$ because it is the restriction of a differential morphism over $\bar F$ (cf.~the algebraically closed case).
\end{proof}

\begin{ex} \label{ex: differential structure on trivial torsor} Let $X= G_F$ be a trivial $G_F$-torsor. Then the derivations $\de\colon F[X]\to F[X]$ that turn $X$ into a differential $G_F$-torsor are in bijection with the Lie algebra of $G_F$.
	 	
To see this, let $x=1\in X(F)\subseteq X(\Fa)$ be the identity element and let $\dt\colon F[X]\to\Fa$ be a derivation extending $\de\colon F\to F$. Since $\tau(x)=x$ for $\tau\in\Gamma$ we see that $\tau(\dt)\colon F[X]\to\Fa$ is a derivation with respect to the $F[X]$-module structure on $\Fa$ given by $f\mapsto f(x)$. From Lemma~\ref{point-derivation-properties}\ref{point lem a} it follows that $\tau(\dt)^x=\tau(\dt)$. Thus $\dt$ is Galois-equivariant if and only if $\tau(\de)=\de$ for all $\tau\in\Gamma$; i.e., $\de(F[X])\subseteq F$. So by Proposition \ref{prop: differential structure at point} the derivations $\de\colon F[X]\to F[X]$ that turn $X$ into a differential $G_F$-torsor are in bijection with the point derivations $\dt\colon F[X]\to F$ at $x$. As in (\ref{eqn: bijections for tangent space}) above, the latter set is in bijection with $T_x X$.
\end{ex}

\begin{ex}\label{Gl_n example}
In particular, consider the group $\GL_n$ and let $X=\GL_{n,F}$ be the trivial $\GL_{n,F}$-torsor, with coordinate ring
$F[X] = F[T,\det(T)^{-1}] := F[t_{ij},\det(T)^{-1}\ |\ 1 \le i,j \le n]$; here 
$T = (t_{ij})$ is an $n\times n$-matrix of indeterminates $t_{ij}$ over $F$.
We can turn $X$ into a differential $\GL_{n,F}$-torsor by defining a derivation $\de_A\colon F[X]\to F[X]$ extending $\de\colon F\to F$ by $\de(T)=AT$ for a fixed $A\in F^{n\times n}$, the Lie algebra of $\GL_{n,F}$. Conversely, if $X$ is a differential $\GL_{n,F}$-torsor, then $X$ is a trivial torsor since $H^1(F,\GL_{n,F})$, which classifies $\GL_{n,F}$-torsors, is trivial by Hilbert's Theorem~90 (e.g., see \cite[Chapter~III, Sect.~1.1, Lemma~1]{Serre:CG}).
It follows from Example \ref{ex: differential structure on trivial torsor} that the derivation on $F[X]$ is of the above form $\de_A$. Thus differential $\GL_{n,F}$-torsors correspond to universal solution rings for linear differential equations $\de(z)=Az$ with $A\in F^{n\times n}$ (and where $z$ is an $n$-tuple of indeterminates).

Distinct choices of $A$ can lead to non-isomorphic differential torsors. For example, if $n=1$, $F={\mathbb C}(x)$ with derivation $d/dx$, then $A=(0)$ defines the trivial differential ${\mathbb G}_m$-torsor (i.e., its constants are ${\mathbb C}[{\mathbb G}_m]$), whereas $A=(1)$ defines a simple differential torsor, with constants~${\mathbb C}$. In particular, a differential torsor can be trivial as a torsor but non-trivial as a differential torsor. 
\end{ex}

\begin{rem}\label{rem: uniqueness of diff}
	Let $H$ be a closed subgroup of $G$ and let $Y$ be a differential $H_F$-torsor. Recall that there is an induced $G_F$-torsor $\Ind_{H_F}^{G_F}(Y)$ (see the appendix, Subsection~\ref{inducedtorsors}). Since $Y\to \Ind_{H_F}^{G_F}(Y)$ is $H_F$-equivariant, it follows from Proposition \ref{prop: moving differential structures}\ref{moving a} that there exists a unique differential structure on $\Ind_{H_F}^{G_F}(Y)$ such that $Y\to \Ind_{H_F}^{G_F}(Y)$ is differential. In the sequel we will always consider $\Ind_{H_F}^{G_F}(Y)$ as a differential $G_F$-torsor by virtue of this differential structure.  
In view of Proposition~\ref{prop: universal property of ind},	
this structure can be made explicit: $\Ind_{H_F}^{G_F}(Y) = (Y \times G)/H = \Spec((F[Y] \otimes_K K[G])^H)$, and the derivation on $(F[Y] \otimes_K K[G])^H$ is the restriction of the one on $F[Y] \otimes_K K[G]$ induced from $F[Y]$ by declaring the elements of $K[G]$ to be constant.
\end{rem}

\subsection{Simple differential torsors and Picard-Vessiot rings} \label{subsec PV}

As before, $(F,\de)$ is a differential field of characteristic zero with field of constants $K=C_F$, and $G$ is a linear algebraic group over $K$.

Recall that a \textit{Picard-Vessiot ring} over $F$ is a differential ring extension $R/F$ of the form $R=F[Z,\det(Z)^{-1}] := F[z_{ij},\det(Z)^{-1} \ | \ 1 \le i,j \le n]$ for some matrix $Z = (z_{ij}) \in \GL_n(R)$ with $\de(Z)Z^{-1}\in F^{n\times n}$, such that $R$ is a simple differential ring and $C_R=K$. 
(The elements $z_{ij}$ need not be algebraically independent over $F$.)
Equivalently, a Picard-Vessiot ring over $F$ is a differential ring without zero divisors of the form $R=F[Z,\det(Z)^{-1}]$ with $\de(Z)Z^{-1}\in F^{n\times n}$ and such that $C_{\Frac(R)}=K$.
In this situation, we also say that $R$ is a {\em Picard-Vessiot ring for the linear differential equation} $\del(z)=Az$, where $A = \de(Z)Z^{-1}\in F^{n\times n}$ and $z$ is an $n$-tuple of indeterminates.
A \textit{Picard-Vessiot extension} of $F$ is the fraction field of a Picard-Vessiot ring over $F$.
If $E/F$ is an extension of differential fields that is finitely generated as a field extension, then
$E$ is a Picard-Vessiot extension of $F$ if and only if $C_E=C_F$, $E=\Frac(R)$ for some differential ring extension $R$ of $F$, and the left $R$-module $R \otimes_F R$ is generated by its constants (see Definition~1.8 and Theorem~3.11 of \cite{AmanoMasuokaTakeuchi:Hopf}). In this situation, $R$ is the associated Picard-Vessiot ring.

The \textit{differential Galois group} of a Picard-Vessiot ring $R$ is defined as the group functor $\underline{\Aut}^\de(R/F)$.  It is an affine group scheme of finite type over $K$ represented by the $K$-algebra $C_{R\otimes_F R}=K[Z^{-1}\otimes Z, \det(Z^{-1}\otimes Z)^{-1} ]$, where $Z^{-1}\otimes Z$ is a short-hand notation for the matrix product $(Z^{-1}\otimes 1) \cdot (1 \otimes Z)\in \GL_n(R\otimes_F R)$. 
For more details about differential Galois theory, see \cite{vdPutSinger} for the case that the constant field $K$ is algebraically closed; and see \cite{Dyc} and \cite{AmanoMasuokaTakeuchi:Hopf} for the general case.
In particular, there is a differential analog of the usual Galois correspondence; see \cite[Theorem~4.4]{Dyc} 
and \cite[Theorem~2.11]{AmanoMasuokaTakeuchi:Hopf}.  (In the former reference, one must be more careful in defining the invariant subfield $E^H$ of a Picard-Vessiot extension, because $E \otimes_K A$ is not necessarily 
$H$-stable in the total ring of fractions of $R \otimes_K A$, for $A$ a $K$-algebra.  Instead, one can use the definition given at the beginning of Section~3 of \cite{BHH}.)  See also the observation after the proof of 
Proposition~\ref{prop X/N diff} below.

In the above situation, there is a linearization $\underline{\Aut}^\del(R/F) \hookrightarrow\GL_n$ that depends on the choice of a fundamental solution matrix $Z \in \GL_n(R)$. Details can be found in \cite{BHH} and \cite{Dyc}. 
The following proposition explains the relation between Picard-Vessiot rings and differential torsors. 

\begin{prop}\label{prop PVR} 
Let $F,K,G$ be as above. 
\renewcommand{\theenumi}{(\alph{enumi})}
\renewcommand{\labelenumi}{(\alph{enumi})}
\begin{enumerate}
\item \label{PVR a}
Let $R/F$ be a Picard-Vessiot ring with differential Galois group $G$. Then $\Spec(R)$ is a simple differential $G_F$-torsor. 
\item \label{PVR b}
Let $X=\Spec(R)$ be a differential $G_F$-torsor such that $R$ is an integral domain and assume that $C_{\Frac(R)}=K$. Then $R/F$ is a Picard-Vessiot ring with differential Galois group $G$. 
\end{enumerate}
\end{prop}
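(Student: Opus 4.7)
My approach is to translate between the torsor axioms and the Amano--Masuoka--Takeuchi characterization of Picard-Vessiot rings recalled just before the proposition.

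For part (a), I would verify each condition in the definition of a simple differential $G_F$-torsor. Simplicity of $R$ as a differential ring is built into the definition of a Picard-Vessiot ring. The co-action $\rho\colon R\to R\otimes_F F[G]$ dual to the $G$-action on $\Spec(R)$ is a differential homomorphism because, by the very definition $G=\underline{\Aut}^\de(R/F)$, the points of $G$ act on $R$ by differential automorphisms. Finally, the torsor property --- that the left $R$-linear extension $R\otimes_F R\to R\otimes_F F[G] = R\otimes_K K[G]$ of $\rho$ is a differential isomorphism --- is the classical Kolchin-type statement that $C_{R\otimes_F R}=K[G]$ generates $R\otimes_F R$ as a left $R$-module (see, e.g., \cite[Theorem~3.11]{AmanoMasuokaTakeuchi:Hopf}).

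For part (b), I would work from the torsor isomorphism $R\otimes_F R\xrightarrow{\sim} R\otimes_F F[G]=R\otimes_K K[G]$, using that it is a \emph{differential} isomorphism. First, since $R$ is a domain, $C_R\subseteq C_{\Frac(R)}=K$, hence $C_R=K$. Next, with the derivation acting only on the left factor of $R\otimes_K K[G]$ and $\{e_i\}$ a $K$-basis of $K[G]$, a term-by-term check gives $C_{R\otimes_K K[G]}=C_R\otimes_K K[G]=K[G]$. Transporting along the differential iso, $C_{R\otimes_F R}\cong K[G]$, and these constants span $R\otimes_F R$ as a left $R$-module. The Amano--Masuoka--Takeuchi criterion quoted in the excerpt then yields that $\Frac(R)$ is a Picard-Vessiot extension of $F$ with associated Picard-Vessiot ring $R$.

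To identify the differential Galois group of $R$ with $G$, I would note that $\underline{\Aut}^\de(R/F)$ is represented by the Hopf algebra $C_{R\otimes_F R}$, which we just identified with $K[G]$; and the $G$-action coming from the torsor structure furnishes a group scheme homomorphism $G\to\underline{\Aut}^\de(R/F)$ inducing exactly this identification on coordinate rings. The main obstacle, in my view, is this last step: verifying that the isomorphism $K[G]\cong C_{R\otimes_F R}$ arising from the torsor property respects the Hopf algebra structures and not merely the underlying $K$-algebras. This compatibility is encoded in the coassociativity of $\rho$ together with the fact that $\rho$ is a differential ring map, but it is the place where one has to be careful rather than formal.
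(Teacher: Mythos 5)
Your proposal is correct and follows essentially the same route as the paper: part \ref{PVR a} rests on Kolchin's theorem (that the spectrum of a Picard--Vessiot ring is a $G_F$-torsor) plus the observation that the co-action is differential, and part \ref{PVR b} deduces from the differential torsor isomorphism $R\otimes_F R\cong R\otimes_K K[G]$ that $R\otimes_F R$ is generated by its constants as a left $R$-module and then invokes the Amano--Masuoka--Takeuchi criterion, ending with the same Hopf-algebra identification $C_{R\otimes_F R}\cong K[G]$ for the Galois group. The only cosmetic difference is that in \ref{PVR a} you justify differentiality of $\rho$ via the invariance of the derivation under the $\underline{\Aut}^\de$-action (in the spirit of Lemma~\ref{lem: invariant derivation}), whereas the paper reads it off from the explicit factorization of $\rho$ through the inclusion of $R$ into the second tensor factor of $R\otimes_F R$; both are valid.
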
 

\begin{proof} 
\ref{PVR a} \ By definition, $R$ is a simple differential ring. By Kolchin's Theorem, $X=\Spec(R)$ is a $G_F$-torsor. (See \cite[Theorem~1.30]{vdPutSinger} in the case that $K$ is algebraically closed, or \cite[Proposition~2.13]{AmanoMasuokaTakeuchi:Hopf} for the general case.)
Therefore we have isomorphisms $R\otimes_F R\cong R\otimes_K K[G]$ and $C_{R\otimes_F R}\cong K[G]$ (using $C_R=K$). The homomorphism corresponding to the $G_F$-action $X\times G_F\to X$ is 
  $$\rho:R\to R\otimes_F R \xrightarrow{\simeq} R\otimes_K K[G] \xrightarrow{\simeq} R\otimes_F F[G]$$
  where the first map is the inclusion into the second factor. (Cf.~\cite[Lemma 1.9]{AmanoMasuokaTakeuchi:Hopf}.) Clearly this is a differential morphism, and so $X$ is a differential torsor.

\smallskip

\ref{PVR b} \ Let $\rho \colon R\to R\otimes_F F[G] \xrightarrow{\simeq} R\otimes_K K[G]$ be the differential homomorphism corresponding to the $G_F$-action on $X$. 
Since $X$ is a $G_F$-torsor, 
the left $R$-linear extension $R\otimes_ F R\to R\otimes_K K[G]$ is a differential isomorphism; and as $K[G]$ is constant, we see that $R\otimes_ F R$ is generated by constants as a left $R$-module. 
Since $C_{\Frac(R)}=K$ and since $\Frac(R)/F$ is a finitely generated field extension, it follows that
$\Frac(R)/F$ is a Picard-Vessiot extension with Picard-Vessiot ring $R$ (using the equivalent criterion given in \cite[Definition~1.8]{AmanoMasuokaTakeuchi:Hopf}).  Moreover, the differential Galois group of $R/F$ is $G$. Indeed, the isomorphism $R\otimes_F R\to R\otimes_K K[G]$ identifies $C_{R\otimes_F R}$ with $K[G]$, and it is easy to check that this is an isomorphism of Hopf algebras.
\end{proof}

\begin{rem}
The close relationship between Picard-Vessiot rings and
differential torsors explained in Prop. \ref{prop PVR} has a parallel in Kolchin's approach to differential Galois theory in \cite{Kolchin:DifferentialAlgebra}.  There, the Galois groups are not necessarily linear, and the corresponding field
extensions (which are Picard-Vessiot extensions in the case of linear
groups) are called {\em strongly normal}.  Roughly speaking, Theorem~9
of \cite[VI.10]{Kolchin:DifferentialAlgebra} says that there is a bijection between the strongly normal extensions $E$ of $F$ with differential Galois group $G$, and the principal homogeneous $G$-spaces $X$ over $F$ such that $E$ is generated as a differential field over $F$ by an ``$X$-primitive''.
We note that Kolchin's framework uses universal domains rather than
scheme theory, and so his principal homogeneous spaces (see
\cite[V.3]{Kolchin:DifferentialAlgebra}) are not the same as torsors in our sense.
\end{rem}

\begin{cor} \label{cor PV equiv}
Let $F,K,G$ be as above, and let $R/F$ be a differential ring extension.  
\renewcommand{\theenumi}{(\alph{enumi})}
\renewcommand{\labelenumi}{(\alph{enumi})}
\begin{enumerate}
\item \label{PV equiv a}
Suppose that $C_R=K$.  Then $R/F$ is a Picard-Vessiot ring with differential Galois group $G$ if and only if $\Spec(R)$ is a simple differential $G_F$-torsor.
\item \label{PV equiv b}
Suppose that $R$ is an integral domain and that $C_{\Frac(R)}=K$.  Then $R/F$ is a Picard-Vessiot ring with differential Galois group $G$ if and only if $\Spec(R)$ is a differential $G_F$-torsor.
\end{enumerate}
\end{cor}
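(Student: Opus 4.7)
My plan is to derive both equivalences from Proposition \ref{prop PVR}, together with the equivalence between the two characterizations of a Picard-Vessiot ring recalled in Subsection \ref{subsec PV}: for a ring of the appropriate form, being simple with $C_R = K$ is equivalent to being an integral domain with $C_{\Frac(R)} = K$.

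For the forward direction of both (a) and (b): if $R/F$ is a Picard-Vessiot ring with differential Galois group $G$, then by the equivalent characterizations $R$ is both simple with $C_R = K$ and an integral domain with $C_{\Frac(R)} = K$, and Proposition \ref{prop PVR}\ref{PVR a} gives that $\Spec(R)$ is a simple differential $G_F$-torsor (hence \emph{a fortiori} a differential $G_F$-torsor). This yields both forward implications at once.

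For the backward direction of (b), the hypotheses match those of Proposition \ref{prop PVR}\ref{PVR b} verbatim, so that result applies directly. The backward direction of (a) is the main point. Given a simple differential $G_F$-torsor $\Spec(R)$ with $C_R = K$, I would reduce to (b) by checking that $R$ is an integral domain and $C_{\Frac(R)} = K$. Since $G$ is of finite type over $K$, the torsor $\Spec(R)$ is of finite type over $F$, so $R$ is a finitely generated $F$-algebra; then a standard result on simple differential rings in characteristic zero (e.g.\ \cite[Lemma~1.17]{vdPutSinger}) gives that $R$ is an integral domain. For $C_{\Frac(R)} = K$, I would use the following simplicity argument: if $c = a/b \in \Frac(R)$ satisfies $\de(c) = 0$, then $I := \{r \in R : rc \in R\}$ is a nonzero differential ideal of $R$ (since $\de(rc) = \de(r)c$ for $r \in I$), so by simplicity $I = R$; hence $c \in R$, and therefore $c \in C_R = K$.

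The main obstacle is the integrality step: one genuinely needs a nontrivial input from the theory of simple differential rings, since a $G_F$-torsor under a disconnected group $G$ need not be geometrically irreducible and so its coordinate ring need not be a domain purely from the torsor structure. Once this input is in hand, both equivalences fall out of Proposition \ref{prop PVR} with only the short ideal argument above.
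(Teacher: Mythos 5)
Your proposal is correct and follows essentially the same route as the paper: both forward directions come from Proposition~\ref{prop PVR}\ref{PVR a}, the reverse of (b) is Proposition~\ref{prop PVR}\ref{PVR b}, and the reverse of (a) reduces to (b) via the fact that a finitely generated simple differential ring in characteristic zero is an integral domain with $C_{\Frac(R)}=C_R$. The paper merely cites this last fact without proof, whereas you supply the standard justifications (the reference for integrality and the differential-ideal argument for constants), both of which are sound.
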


\begin{proof}
The forward direction of \ref{PV equiv a} was given in Proposition~\ref{prop PVR}\ref{PVR a}, while the reverse direction follows from \ref{PV equiv b} because a simple differential $K$-algebra $R$ is an integral domain that satisfies $C_R = C_{\Frac(R)}$.
The forward direction 
of \ref{PV equiv b} 
follows from Proposition~\ref{prop PVR}\ref{PVR a}, while the reverse direction is immediate from Proposition~\ref{prop PVR}\ref{PVR b}.
\end{proof}

One advantage of working with differential torsors as compared to Picard-Vessiot rings is that for subgroups $H$ of $G$, we can induce differential $H_F$-torsors to differential $G_F$-torsors, which will allow us to apply patching techniques.  See Remark~\ref{rem: uniqueness of diff}.

\begin{prop} \label{prop: torsors are ind}
	Let $X$ be a differential $G_F$-torsor. If $K$ is algebraically closed, then there exists a closed subgroup $H$ of $G$ and a simple differential $H_F$-torsor $Y$ with $C_{\Frac(F[Y])}=K$ such that $X\cong \indF(Y)$ (as differential torsors).  
\end{prop}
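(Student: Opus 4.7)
The plan is to realize $Y$ as the vanishing locus of a maximal differential ideal of $F[X]$ and then recover $X$ by induction from a closed subgroup $H \leq G$ that is cut out of the torsor isomorphism. By Zorn's lemma, choose a maximal differential ideal $\mathfrak{p} \subseteq F[X]$ and set $Y := V(\mathfrak{p}) \subseteq X$, so that $F[Y] := F[X]/\mathfrak{p}$ is a simple differential ring and $Y \hookrightarrow X$ is automatically a differential morphism. Since $K = C_F$ is algebraically closed and $\cha F = 0$, a standard lemma in differential algebra (e.g.\ \cite[Lemma~1.17]{vdPutSinger}) gives that $F[Y]$ is an integral domain with $C_{F[Y]} = K$; in particular $C_{\Frac(F[Y])} = K$.

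The heart of the argument is to extract the subgroup $H$ and an $H_F$-torsor structure on $Y$ from the differential isomorphism $\phi\colon F[X] \otimes_F F[X] \xrightarrow{\sim} F[X] \otimes_K K[G]$ witnessing the $G_F$-torsor structure of $X$. Reducing the first tensor factor modulo $\mathfrak{p}$ yields a differential isomorphism $\bar\phi\colon F[Y] \otimes_F F[X] \xrightarrow{\sim} F[Y] \otimes_K K[G]$, and quotienting the second factor by $\mathfrak{p}$ then presents $F[Y] \otimes_F F[Y]$ as a differential quotient $(F[Y] \otimes_K K[G])/I$ for some differential ideal $I$. The key input is the classical constant-descent lemma: every differential ideal of $R \otimes_K A$ has the form $R \otimes_K J$ whenever $R$ is a simple differential ring with constants $K$ and $A$ carries the zero derivation. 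Applied here, this forces $I = F[Y] \otimes_K J$ for a unique ideal $J \subseteq K[G]$, and so
\[
F[Y] \otimes_F F[Y] \;\cong\; F[Y] \otimes_K (K[G]/J).
\]
Coassociativity of the co-action $\rho\colon F[X] \to F[X] \otimes_K K[G]$, transferred via the analogous threefold quotient $F[X]^{\otimes 3} \twoheadrightarrow F[Y]^{\otimes 3}$, translates into the statement that $J$ is a Hopf ideal of $K[G]$. Hence $H := V(J)$ is a closed subgroup of $G$, and the displayed isomorphism is the defining isomorphism of a differential $H_F$-torsor structure on $Y$. Since $F[Y]$ is simple with $C_{\Frac(F[Y])} = K$, $Y$ is a simple differential $H_F$-torsor, and by construction the inclusion $Y \hookrightarrow X$ is $H_F$-equivariant.

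To conclude, the $H_F$-equivariant differential embedding $Y \hookrightarrow X$ extends uniquely to a $G_F$-equivariant differential morphism $\indF(Y) \to X$ via the universal property of the induced torsor (compare Remark~\ref{rem: uniqueness of diff}). Both source and target are $G_F$-torsors, so this $G_F$-equivariant morphism is automatically an isomorphism of $G_F$-torsors. The differential structures on both sides are uniquely characterized by the requirement that the map from $Y$ be differential (Remark~\ref{rem: uniqueness of diff} together with Proposition~\ref{prop: moving differential structures}\ref{moving a}), so the torsor isomorphism is in fact a differential isomorphism. The main obstacle is the middle paragraph: producing $H$ as a closed subgroup of $G$, rather than merely as an abstract differential Galois group. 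This rests both on the constant-descent lemma (to extract the ideal $J$ from $I$) and on the compatibility of $\rho$ with the quotient $F[X] \to F[Y]$ (to verify that $J$ is a Hopf ideal).
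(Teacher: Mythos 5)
Your overall strategy coincides with the paper's: take a maximal differential ideal of $F[X]$, let $Y$ be its zero locus, extract a closed subgroup $H\le G$ from the image of $K[G]$ in $F[Y]\otimes_F F[Y]$, and recover $X$ as $\indF(Y)$ via the universal property of induction together with Remark~\ref{rem: uniqueness of diff}. Your first and last paragraphs match the paper's proof essentially step for step (the paper likewise cites \cite[Lemma~1.17]{vdPutSinger} for primality and absence of new constants, and concludes via Remark~\ref{rem: uniqueness of ind}\ref{ind rem a} and Remark~\ref{rem: uniqueness of diff}). Where you diverge is the middle step. The paper shows that the image $B$ of $K[G]$ in $F[Y]\otimes_F F[Y]$ consists of constants, uses linear disjointness of constants (Kolchin) to obtain $F[Y]\otimes_F F[Y]\cong F[Y]\otimes_K B$, and then outsources the group structure entirely to Picard--Vessiot theory: $F[Y]$ is a Picard--Vessiot ring by \cite[Definition~1.8]{AmanoMasuokaTakeuchi:Hopf}, its differential Galois group is $\Spec(B)$ by \cite[Lemma~1.9]{AmanoMasuokaTakeuchi:Hopf}, and $Y$ is an $H_F$-torsor by Corollary~\ref{cor PV equiv}\ref{PV equiv b}. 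You instead invoke Lemma~\ref{lem ideal cor} to write the relevant kernel as $F[Y]\otimes_K J$ and propose to verify directly that $J$ is a Hopf ideal. That is a legitimate and somewhat more self-contained route, since it avoids the Picard--Vessiot formalism; and your use of Lemma~\ref{lem ideal cor} is interchangeable with the paper's linear-disjointness argument.

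The one place you are too quick is the assertion that ``coassociativity translates into $J$ being a Hopf ideal.'' Coassociativity of $\rho$, pushed through the threefold quotient and a second application of Lemma~\ref{lem ideal cor}, yields only the coideal condition $\Delta(J)\subseteq J\otimes K[G]+K[G]\otimes J$. For $V(J)$ to be a closed subgroup scheme you also need $\varepsilon(J)=0$ (which comes from compatibility of the torsor isomorphism with the multiplication map $F[Y]\otimes_F F[Y]\to F[Y]$) and $S(J)\subseteq J$ (which comes from the flip of the two tensor factors), and you must check that the induced comultiplication, counit and antipode on $K[G]/J$ agree with the structure maps carried by $B\subseteq F[Y]\otimes_F F[Y]$. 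None of this fails --- it is precisely the content of \cite[Lemma~1.9]{AmanoMasuokaTakeuchi:Hopf}, which the paper cites rather than reproves --- but as written your sketch asserts the conclusion rather than establishing it. Either spell out those additional verifications or cite the lemma as the paper does.
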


\begin{proof}
	Let $I\triangleleft F[X]$ be maximal among the proper differential ideals in $F[X]$ and let $Y\subseteq X$ be defined by $I$. By \cite[Lemma 1.17]{vdPutSinger}), the ideal $I$ is prime, and in particular radical.  
So $F[Y]\cong F[X]/I$, and this is an integral domain.  Moreover, also by \cite[Lemma 1.17]{vdPutSinger}, there are no new constants in the field of fractions $E$ of $F[Y]$.
	
Let $B\subseteq F[Y]\otimes_F F[Y]$ denote the image of $K[G]$ in $F[Y]\otimes_F F[Y]$ under
\begin{equation} \label{eqn: differential map}
F[X]\otimes_K K[G]\cong F[X]\otimes_F F[X]\to F[Y]\otimes_F F[Y].
\end{equation}
Since $F[X]\to F[Y]$ is a surjective differential homomorphism so is the map in (\ref{eqn: differential map}). Therefore the elements of $B$ are constants, and $F[Y]\otimes_F F[Y]$ is generated by $B$ as a left $F[Y]$-module. Hence $E\otimes_F F[Y]$ is generated by $B$ as an $E$-module. It is a general fact that the constants of a differential $E$-algebra are linearly disjoint from $E$ over $C_E$ (\cite[Chapter II, Section 1, Corollary 1, p. 87]{Kolchin:DifferentialAlgebra}). Thus $E\otimes_F F[Y]\cong E\otimes_K B$. It follows that $F[Y]\otimes_F F[Y]\cong F[Y]\otimes_K B$ and that the constants of $F[Y]\otimes_F F[Y]$ equal $B$.
	
	We have seen that $E=\Frac(F[Y])$ satisfies $C_E=K$ and that $F[Y]\otimes_F F[Y]$ is generated by its constants as a left $F[Y]$-module. 
	Thus $E$ is a Picard-Vessiot extension of $F$ by the criterion in \cite[Definition~1.8]{AmanoMasuokaTakeuchi:Hopf},
with Picard-Vessiot ring $F[Y]$.	
By \cite[Lemma~1.9]{AmanoMasuokaTakeuchi:Hopf} 
and the discussion on pages 137-138 there,
the associated differential Galois group is $H:=\Spec(B)$.  Hence $Y$ is 
an $H_F$-torsor, by Corollary~\ref{cor PV equiv}\ref{PV equiv b}.
Since $K[G]\to B=K[H]$ is surjective, $H$ is a closed subgroup of $G$. 
	
	We have a commutative diagram:
	\begin{equation} \label{eq: action compatible}
	\xymatrix{
F[X]\otimes_F F[X] \ar[d] \ar^{\cong}[r]& F[X]\otimes_K K[G] \ar[d] \\
	F[Y]\otimes_F F[Y]  \ar^{\cong}[r] & 	F[Y]\otimes_K K[H]
	}
	\end{equation}
	The morphisms dual to the action of $G_F$ and $H_F$ on $X$ and $Y$, respectively, are obtained from the horizontal isomorphisms in (\ref{eq: action compatible}) by precomposing with the inclusions into the second factor. Thus the commutativity of (\ref{eq: action compatible}) shows that the inclusion morphism $Y\to X$ is $H_F$-equivariant. 
	Thus $X$ and $\indF(Y)$ are isomorphic as $G_F$-torsors by Remark~\ref{rem: uniqueness of ind}\ref{ind rem a}, and as differential torsors by Remark~\ref{rem: uniqueness of diff}.
\end{proof}

\begin{lem}	\label{lem: simple not ind}
	Let $H$ be a closed subgroup of the linear algebraic group $G$, and let $Y$ be a differential $H_F$-torsor. If $\iind_{H_F}^{G_F}(Y)$ is a simple differential $G_F$-torsor, then $H=G$.
\end{lem}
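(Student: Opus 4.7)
My strategy is to exploit the canonical embedding $\iota \colon Y \hookrightarrow X := \Ind_{H_F}^{G_F}(Y)$ together with simplicity of the differential $G_F$-torsor $X$. The plan is first to show that $\iota$ induces a surjection $\iota^* \colon F[X] \to F[Y]$ whose kernel is a proper differential ideal, hence zero by simplicity; and then to use the torsor co-actions to convert this isomorphism into the conclusion that the surjection $K[G] \twoheadrightarrow K[H]$ is itself an isomorphism.

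More concretely, the map $\iota \colon Y \to X$ (coming from the construction $X = (Y \times G)/H$ via $y \mapsto [y,1]$) is $H_F$-equivariant and a closed immersion; by Remark~\ref{rem: uniqueness of diff} it is moreover a differential morphism. Thus $\iota^* \colon F[X] \to F[Y]$ is a surjective differential ring homomorphism, and its kernel $I$ is a differential ideal of $F[X]$ with $I \neq F[X]$ (since $\iota^*(1)=1$). Simplicity of $X$ as a differential $G_F$-torsor forces $I = 0$, so $\iota^*$ is an isomorphism of differential $F$-algebras.

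For the second step, let $\rho_X \colon F[X] \to F[X] \otimes_K K[G]$ and $\rho_Y \colon F[Y] \to F[Y] \otimes_K K[H]$ denote the co-actions and let $\pi^* \colon K[G] \to K[H]$ be the surjection corresponding to $H \hookrightarrow G$. The $H_F$-equivariance of $\iota$ gives $(\iota^* \otimes \pi^*) \circ \rho_X = \rho_Y \circ \iota^*$. Identifying $F[X]$ with $F[Y]$ via $\iota^*$, the torsor structures provide $F[Y]$-linear isomorphisms
$$\alpha_X \colon F[Y] \otimes_F F[Y] \xrightarrow{\sim} F[Y] \otimes_K K[G], \qquad \alpha_Y \colon F[Y] \otimes_F F[Y] \xrightarrow{\sim} F[Y] \otimes_K K[H],$$
defined respectively from $\rho_X$ and $\rho_Y$, and satisfying $\alpha_Y = (\operatorname{id} \otimes \pi^*) \circ \alpha_X$. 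Hence $\operatorname{id} \otimes \pi^* \colon F[Y] \otimes_K K[G] \to F[Y] \otimes_K K[H]$ is an isomorphism. Since $F[Y]$ is a nonzero $K$-vector space, it is faithfully flat over $K$, so $\pi^* \colon K[G] \to K[H]$ must itself be an isomorphism; equivalently, $H = G$.

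The main obstacle is justifying that $\iota \colon Y \to X$ is a closed immersion; this should follow from the appendix's explicit description of $\Ind_{H_F}^{G_F}(Y)$ as the quotient $(Y \times G)/H$, since $Y \times \{1\} \hookrightarrow Y \times G$ is a closed immersion and $Y \times G \to X$ is a faithfully flat principal $H$-bundle, allowing closed immersions to descend. The rest of the proof is a routine diagram chase with torsor co-actions.
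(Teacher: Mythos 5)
Your proposal is correct and follows essentially the same route as the paper's proof: surjectivity of $\iota^*\colon F[X]\to F[Y]$ plus simplicity of $F[X]$ gives $F[X]\cong F[Y]$, and the torsor isomorphisms $F[X]\otimes_F F[X]\cong F[X]\otimes_K K[G]$ and $F[Y]\otimes_F F[Y]\cong F[Y]\otimes_K K[H]$ then force $K[G]\cong K[H]$, i.e.\ $H=G$. The one step you flag as an obstacle --- that $Y\to\Ind_{H_F}^{G_F}(Y)$ is a closed immersion --- is exactly Lemma~\ref{lem: canonical map is embedding} in the appendix, so you can simply cite it rather than redo the descent argument.
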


\begin{proof}
	Since $Y\to X:=\iind_{H_F}^{G_F}(Y)$ is differential, the morphism $F[X]\to F[Y]$ is differential. It is surjective by Lemma \ref{lem: canonical map is embedding}, and injective because $F[X]$ is simple.  Thus $F[X]\cong F[Y].$ This implies $F[X]\otimes_K K[G]\cong F[Y]\otimes_K K[H]$ and therefore $H=G$.
\end{proof}
 
 The following proposition concerns the passage from a linear algebraic group $G$ to its quotient by a normal subgroup~$N$. See Subsection~\ref{quotients} for a discussion of quotients of torsors.

\begin{prop}\label{prop X/N diff}
	Let $N\trianglelefteq G$ be a normal closed subgroup. If $X$ is a differential $G_F$-torsor, then $Y:=X/N$ is a differential $(G/N)_F$-torsor. Moreover, if $X$ is a simple differential $G_F$-torsor, then $Y$ is a simple differential $(G/N)_F$-torsor.
\end{prop}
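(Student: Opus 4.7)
The plan is to identify $F[Y]$ with the $N$-invariant subring $F[X]^N\subseteq F[X]$ (as set up in the appendix, Subsection~\ref{quotients}), restrict the derivation of $F[X]$ to this subring, and then verify in turn that the restricted derivation makes $Y$ a differential $(G/N)_F$-torsor and that simplicity is preserved.

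First I would show that $\de\colon F[X]\to F[X]$ preserves $F[Y]=F[X]^N$. Since $\cha K=0$, the normal closed subgroup $N$ is smooth, so $F[X]^N$ coincides with the subring of elements of $F[X]$ fixed by the action of $N(\Ka)\subseteq G(\Ka)$ on $F[X]\otimes_K\Ka$. For any $f\in F[Y]$ and $n\in N(\Ka)$, Lemma~\ref{lem: invariant derivation} applied to the differential $G_F$-torsor $X$ gives $n(\de(f))=\de(n(f))=\de(f)$, so $\de(f)\in F[Y]$. This yields a derivation on $F[Y]$ extending $\de\colon F\to F$.

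Next, to show that this derivation endows $Y$ with a differential $(G/N)_F$-torsor structure, I would apply Lemma~\ref{lem: invariant derivation} again. Given $\bar g\in (G/N)(\Ka)$, the smoothness of $G$ guarantees a lift $g\in G(\Ka)$, and the action of $\bar g$ on $F[Y]\otimes_K\Ka$ is the restriction of the action of $g$ on $F[X]\otimes_K\Ka$. Since $\de$ on $F[Y]\otimes_K\Ka$ is the restriction of $\de$ on $F[X]\otimes_K\Ka$ by the previous step, the identity $g\circ\de=\de\circ g$ on $F[X]\otimes_K\Ka$ (which holds because $X$ is a differential $G_F$-torsor) restricts to $\bar g\circ\de=\de\circ\bar g$ on $F[Y]\otimes_K\Ka$, as required.

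For the simplicity claim, suppose $X$ is simple and let $I\subseteq F[Y]$ be a differential ideal. By the Leibniz rule, the extended ideal $I\cdot F[X]$ is a differential ideal of $F[X]$, so by simplicity it equals $0$ or $F[X]$. Since $X\to Y$ is an $N_F$-torsor (see the appendix), $F[X]$ is faithfully flat over $F[Y]$, which rules out $I$ being a proper nonzero ideal of $F[Y]$. The main technical points to pin down carefully are the identification $F[Y]=F[X]^N$, the fact that $X\to Y$ is an $N_F$-torsor (hence faithfully flat), and the surjectivity $G(\Ka)\twoheadrightarrow (G/N)(\Ka)$ coming from smoothness; granted these appendix-level facts, the rest is two applications of Lemma~\ref{lem: invariant derivation} together with a standard extension-of-ideals argument.
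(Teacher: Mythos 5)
Your proof is correct, and for the simplicity assertion it is identical to the paper's: extend the ideal to $F[X]$, use differential simplicity there, and contract back via faithful flatness of $F[X]$ over $F[X]^{N_F}$, which is supplied by Proposition~\ref{prop X/N}\ref{normal part}. For the first assertion, however, you take a genuinely different route. The paper disposes of it in one line by applying the transport result Proposition~\ref{prop: moving differential structures}\ref{moving a} to the quotient morphism $X\to X/N$; that result in turn runs through the point-derivation formalism of Proposition~\ref{prop: differential structure at point}. You instead work directly with invariant derivations: using reducedness of $N$ (automatic in characteristic zero) to identify $F[X]^{N}$ with the $N(\Ka)$-fixed subring, you check that $\de$ preserves it via Lemma~\ref{lem: invariant derivation}, and then verify $(G/N)(\Ka)$-invariance of the restricted derivation by lifting $\bar g$ to $G(\Ka)$ and restricting the identity $g\circ\de=\de\circ g$. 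This is more elementary and self-contained, at the cost of having to justify two compatibilities that the paper's citation hides: that the co-action on $F[Y]=F[X]^{N}$ is the restriction of $\rho_X$ (this is part of Proposition~\ref{prop X/N}\ref{normal part}), and that $G(\Ka)\to(G/N)(\Ka)$ is surjective --- which follows from $\Ka$ being algebraically closed and $G\to G/N$ being faithfully flat, rather than from smoothness of $G$ per se, a point worth stating precisely. By the uniqueness in Proposition~\ref{prop: moving differential structures}\ref{moving a}, the differential structure you construct is the same one the paper obtains, so the two arguments are fully compatible.
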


\begin{proof}
First recall that $X/N$ exists as an affine variety and is a $(G/N)_F$-torsor by
Proposition \ref{prop X/N}\ref{normal part}; viz.\ $X/N = \widetilde{X\qq N_F}=\Spec(F[X]^{N_F})$.
Applying Proposition~\ref{prop: moving differential structures}\ref{moving a} to the quotient morphism
$\psi:X \to Y =X/N$ yields a unique differential structure on $Y$ that is compatible with that of $X$.
	
Next, assume that $X$ is a simple differential $G_F$-torsor; i.e., that $F[X]$ has no non-trivial differential ideals. Let $I \trianglelefteq F[X]^{N_F}$ be a non-zero differential ideal. Then $J=I\cdot F[X]$ is a non-zero differential ideal of $F[X]$, hence $J=F[X]$. By Proposition \ref{prop X/N}\ref{normal part}, $F[X]$ is faithfully flat over $F[X]^{N_F}$ and thus $I=J\cap F[X]^{N_F}$ (see \cite[Thm.~7.5.(ii)]{Mats}). Hence $I=F[X]^{N_F}$.  Thus $F[X]^{N_F}$ is differentially simple, and so $Y$ is a simple differential $(G/N)_F$-torsor. 
\end{proof}

Observe that Corollary~\ref{cor PV equiv} and Proposition~\ref{prop X/N diff} provide a partial version of the Galois correspondence in differential Galois theory, via torsors.  Namely, if $R$ is a Picard-Vessiot ring over $F$ with differential Galois group $G$, and if $N$ is a closed normal subgroup of $G$, then $R^N$ is a Picard-Vessiot ring over $F$ with differential Galois group $G/N$, by considering the corresponding torsors $X = \Spec(R)$ and $X/N = \Spec(R^N)$.  Also, if $X=\Spec(R)$ is as above and $H$ is any closed subgroup of $G$ such that $F(X/H)=F$, then $H=G$.  To see this, first recall that $X/H$ is an integral quasi-projective $F$-variety (by Proposition~\ref{prop X/N}\ref{X/H part} and the integrality of $X$); so if its function field is $F$ then it is isomorphic to a single $F$-point $\Spec(F)$.  Thus $\Spec(\Fa)=(X/H)(\Fa) = X(\Fa)/H(\Fa) \cong G(\Fa)/H(\Fa)$; i.e.\ $H(\Fa)=G(\Fa)$, and hence $H=G$ since $\Fa$-points are dense.  The above facts will be useful later.

\begin{lem}  \label{lem theta}	
Let $H$ be a closed subgroup of the linear algebraic group $G$, let $Y$ be a differential $H_F$-torsor, 
and write $X= \Ind_{H_F}^{G_F}(Y)$. Then
$$\psi\colon Y\times G_F\to Y\times X,\ (y,g)\mapsto (y,y.g)$$
is an isomorphism. If $L$ is a differential field containing $R := F[Y]$ as a differential subring, the pullback of $\psi$ via $\Spec(L)\to Y$ yields an isomorphism of differential $G_L$-torsors $G_L\to X_L$. 
With $S := F[X]= \Ind_{H_F}^{G_F}(R)$,
the dual isomorphism on coordinate rings
$$\Theta: L[X]=L \otimes_F S=L\otimes_F(R\otimes_F F[G])^H\longrightarrow^{\!\!\!\!\!\!\!\!\!\!\sim\,}\  L[G]$$
is given by $\Theta(\sum_i a_i \otimes r_i \otimes f_i)=\sum_i a_i \cdot r_i \cdot f_i$, for all elements $a_i \in L \subseteq L[G], r_i \in R \subseteq L \subseteq L[G], f_i \in F[G] \subseteq L[G]$ such that $\sum_i a_i \otimes r_i \otimes f_i\in L\otimes_F(R\otimes_F F[G])^H$. 		
\end{lem}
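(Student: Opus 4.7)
The plan is to prove this lemma in three steps: first verify that $\psi$ is an isomorphism of $F$-schemes, then pass to the pullback along $\Spec(L) \to Y$, and finally check that the resulting map of $G_L$-torsors is differential by exhibiting its dual $\Theta$ as a composition of differential maps.

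For the first step, I would view both $Y\times G_F$ and $Y\times X$ as $G_F$-torsors over $Y$, with $G_F$ acting via right translation on the second factor. The source is the trivial $G_F$-torsor over $Y$, while the target is the base change of the $G_F$-torsor $X$ along the structure morphism $Y\to\Spec(F)$. A direct check shows that $\psi$ lies over $Y$ and is $G_F$-equivariant: $\psi((y,g).g') = (y,y.(gg')) = (y,(y.g).g') = \psi(y,g).g'$. Since any equivariant morphism between two torsors under the same group is automatically an isomorphism, $\psi$ is a scheme isomorphism. (Alternatively, one may construct an inverse using the ``division'' morphism $X\times X\to G$ coming from the torsor structure of $X$.) Pulling back $\psi$ by $\Spec(L)\to Y$ then gives a $G_L$-equivariant isomorphism $\psi_L\colon G_L\to X_L$, since $\Spec(L)\times_Y(Y\times G_F)=G_L$ and $\Spec(L)\times_Y(Y\times X)=X_L$.

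Next, I would compute $\Theta$ explicitly. For $r\in R$ and $s=\sum r_i\otimes f_i\in S=(R\otimes_F F[G])^H$, the formula $\psi(y,g)=(y,y.g)$ and the fact that the inclusion $Y\hookrightarrow X$ is dual to the inclusion $S=(R\otimes_F F[G])^H\hookrightarrow R\otimes_F F[G]$ yield
$$\psi^*(r\otimes s)(y,g)=r(y)\cdot s(y.g)=\sum r(y)r_i(y)f_i(g),$$
so that $\psi^*(r\otimes s)=\sum (rr_i)\otimes f_i$ in $R\otimes_F F[G]$. Tensoring with $L\otimes_R -$ (which corresponds to the pullback along $\Spec(L)\to Y$) yields the stated formula $\Theta(\sum_i a_i\otimes r_i\otimes f_i)=\sum_i a_ir_if_i$ in $L[G]$.

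Finally, to check that $\Theta$ is differential, I would factor it as
$$L\otimes_F(R\otimes_F F[G])^H\ \hookrightarrow\ L\otimes_F R\otimes_F F[G]\ \xrightarrow{m}\ L\otimes_F F[G]=L[G],$$
where the first arrow is the natural inclusion and $m$ is induced by the multiplication map $L\otimes_F R\to L$, $a\otimes r\mapsto ar$, arising from the inclusion $R\subseteq L$. By Remark~\ref{rem: uniqueness of diff}, the derivation on $(R\otimes_F F[G])^H=F[X]$ is the restriction of the standard derivation on $R\otimes_F F[G]$ (for which $K[G]$ is constant), so the first map is differential. Since $R\hookrightarrow L$ is a differential homomorphism, the multiplication $L\otimes_F R\to L$ is differential, and hence so is $m$. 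Composing, $\Theta$ is a differential isomorphism. The main technical point is the explicit identification of $\Theta$ together with its factorization through the natural multiplication map; once this factorization is in place, the differential compatibility is essentially automatic from the tensor-product derivation structures.
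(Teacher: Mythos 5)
Your proof is correct and follows essentially the same route as the paper: both deduce that $\psi$ is an isomorphism from the torsor property of $X$ (restricting $X\times G_F\to X\times X$, or equivalently comparing two $G_F$-torsors over $Y$), identify $\Theta$ via the evaluation formula $s(y.g)=\sum_i r_i(y)f_i(g)$ for $s=\sum_i r_i\otimes f_i\in (R\otimes_F F[G])^H$, and then tensor with $L$ over $F[Y]$. One small slip worth correcting: the ring inclusion $(R\otimes_F F[G])^H\hookrightarrow R\otimes_F F[G]$ is dual to the action morphism $Y\times G_F\to X$, $(y,g)\mapsto y.g$, not to the closed embedding $Y\hookrightarrow X$ (whose dual is the surjection $S\to R$ of Lemma~\ref{lem: canonical map is embedding}) --- though the formula you actually use is the correct one, and your explicit factorization of $\Theta$ to verify that it is differential is a welcome addition, since the paper leaves that point implicit.
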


\begin{proof}
	Since $X=\indF(Y)$ is a $G_F$-torsor,
	the morphism $X \times G_F \to X \times X$ given by $(x,g) \mapsto (x,x.g)$ is an isomorphism.  Restricting to 
	$Y \times G_F$ yields the isomorphism $\psi$.
As $\psi$ is $G_F$-equivariant (where $G_F$ is acting on the right factors), $G_L\to X_L$ is an isomorphism of $G_L$-torsors. The inclusion $(F[Y]\otimes_F F[G])^H\subseteq F[Y]\otimes_F F[G]$ corresponds to the morphism $Y\times G_F\to\indF(Y),\ (y,g)\mapsto y.g$, and so the dual map of $\psi$ is given by $$F[Y]\otimes_F(F[Y]\otimes_F F[G])^H\longrightarrow F[Y]\otimes_F F[G],\ \sum_i a_i \otimes r_i \otimes f_i\mapsto\sum_i a_i \cdot r_i \otimes f_i$$
	for all $a_i \in F[Y], r_i \in F[Y], f_i \in F[G]$ such that $\sum_i a_i \otimes r_i \otimes f_i\in F[Y]\otimes_F(F[Y]\otimes_F F[G])^H$.  Tensoring over $F[Y]$ with $L$ yields the last assertion.
\end{proof}

\section{Differential patching and embedding problems} \label{sec: Patch Embed}

In this section, we construct differential torsors over a given field $F$ by using  differential torsors over larger fields, and patching them.  This builds on the method of patching over fields (see \cite{HH:FP}), and in particular on a result in \cite{HHKtorsor} on patching torsors.  The new aspect in our situation is the differential structure on the torsors.  Using this approach and the correspondence between simple differential torsors and Picard-Vessiot extensions from the previous section, we can construct Picard-Vessiot extensions with desired properties.  This is useful both in the inverse differential Galois problem and for solving embedding problems in differential Galois theory.

\subsection{Patching differential torsors}

The basic situation is the following:  We have a quadruple of fields $(F,F_1,F_2,F_0)$ together with inclusions $F \hookrightarrow F_i \hookrightarrow F_0$ for $i=1,2$, such that the diagram
\[\xymatrix{
& F_0 & \\
 F_1  \ar[ur]   && F_2 \ar[ul] \\
            & F \ar[ul] \ar[ur]  & } \] 
commutes and such that $F$ is the intersection of $F_1$ and $F_2$ taken inside $F_0$.  (Thus $F$ is the inverse limit of the finite inverse system consisting of the fields $F_i$.)
We refer to such a quadruple as a {\it diamond}.  We say that a diamond has the {\it factorization property} if for every $n>0$, every element          
$A \in \GL_n(F_0)$ can be written as $A_2^{-1}A_1$ with $A_i \in \GL_n(F_i)$.  

It has been useful in applications (e.g. in 
Section 9 of \cite{HHKtorsor}) to consider more general collections of fields and 
inclusions; and the applications in that more general situation do not seem to follow easily from the case of diamonds of fields.
More precisely, as in \cite{HHKtorsor}, a 
\textit{factorization inverse system} over a field $F$ is a finite inverse system $\{F_i\}_{i\in \I}$ of fields whose inverse limit (in the category of rings) is $F$, whose index set $\I$ has a partition $\I = \I_v \cup \I_e$ into a disjoint union such that for each index $k \in \I_e$ there are exactly two elements $i, j \in \I_v$ for which the inverse system contains maps $F_i,F_j \to F_k$, and such that there are no other maps given in the inverse system.  If there is a map $F_i \to F_k$ in the inverse system, we write $i \succ k$; this defines a partial ordering on $\I$.  

A factorization inverse system determines a (multi-)graph $\Gamma$ whose vertices are the elements of $\I_v$ and whose edges are the elements of $\I_e$. The vertices of an edge $k \in \I_e$ correspond to the elements $i, j \in \I_v$ such that $i, j \succ k$. Note that the graph $\Gamma$ is connected (otherwise, the inverse limit $F$ would have zero divisors). For every $k\in \I_e$, we fix a labeling $l=l_k$ and $r=r_k$ of its vertices $l$ and $r$ 
(i.e., we assign each edge a left vertex and a right vertex). Note that an element $i \in \I_v$ can be a left vertex of an edge and a right vertex of another edge. When working with factorization inverse systems, we always assume that such an orientation of the edges has been fixed. A factorization inverse system $\{F_i\}_{i\in \I}$ has the \textit{simultaneous factorization property} if for any collection of matrices $A_k \in \GL_n(F_k)$, for $k \in \I_e$, there exist matrices $A_i \in \GL_n(F_i)$ for all $i \in \I_v$ such that $A_k=A_{r_k}^{-1}\cdot A_{l_k}$ for all $k \in \I_e$, where we view $A_{r_k}$ and $A_{l_k}$ as matrices with entries in $F_k$ via the inclusions $F_{r_k}, F_{l_k} \hookrightarrow F_k$.  In the case that $\I_e$ has just one element $0$, and $\I_v$ has two elements $l_k=1, r_k=2$, we recover the notions of a diamond and of a diamond with the factorization property.

Turning now to our situation, a
\textit{differential factorization inverse system} over a field $F$ is a factorization inverse system over $F$, such that all fields $F_i$, $i \in \I$, are differential fields of characteristic zero and such that the inclusions $F_{l_k}, F_{r_k} \hookrightarrow F_k$ are differential homomorphisms for all $k \in \I_e$. Note that then $F$ inherits a structure as a differential field (of characteristic zero) and the embeddings $F \hookrightarrow F_i$ given by mapping to the $i$-th component in the inverse limit are differential homomorphisms for all $i \in \I$.  In the case of a diamond, we call the quadruple a \textit{differential diamond}.  A differential factorization inverse system (and in particular a differential diamond) may have the factorization property, as defined above.

\begin{ex} \label{ex diamond}
\renewcommand{\theenumi}{(\alph{enumi})}
\renewcommand{\labelenumi}{(\alph{enumi})}
\begin{enumerate}
\item \label{ex diamond cx}
Let $F = \C(x)$, the field of rational functions over $\C$, or equivalently the field of meromorphic functions on the Riemann sphere $\P^1_\C$.  Choose open discs $U_1$ centered at the point $x=0$ and $U_2$ centered at $x=\infty$, such that $U_1 \cup U_2 = \P^1_\C$.  Let $U_0$ be the annulus $U_1 \cap U_2$, and write $F_i$ for the field of meromorphic functions on $U_i$ for $0,1,2$.  Then $(F,F_1,F_2,F_0)$ is a differential diamond with the factorization property with respect to the derivation $d/dx$.  See Lemma~\ref{int-fact lem} for a more general statement concerning open subsets of Riemann surfaces.  This will be used in solving differential embedding problems over complex function fields.
\item \label{ex diamond other}
There are also examples of 
factorization inverse systems with the simultaneous factorization 
property over function fields over a complete discretely valued field 
$K$; see \cite[Corollary~3.4, Proposition~2.2]{HHKtorsor}.  In the case of the 
function field $K(x)$ together with the derivation $d/dx$ in 
characteristic zero, we obtain differential factorization inverse 
systems.  Such examples can be viewed as rigid analytic analogs of 
Example~\ref{ex diamond}\ref{ex diamond cx}.
\end{enumerate}
\end{ex}

Following \cite[Section~2]{HHKtorsor}, for a linear algebraic group $G$ over $F$ and a factorization inverse system of fields $\{F_i\}_{i \in \I}$ with inverse limit $F$, we define a $G$-{\it torsor patching problem} to be a system of $G_{F_i}$-torsors $X_i=\Spec(S_i)$ together with $F_k$-isomorphisms of $G_{F_k}$-torsors $\nu_{ik}:F_k \times_{F_i} X_i \to X_k$ for all pairs of distinct indices with $i \succ k$ (i.e., such that $F_i \subseteq F_k$).  Here $\nu_{ik}$ corresponds to an isomorphism of coordinate rings $\nu_{ik}^*:S_k \to F_k\otimes_{F_i} S_i$ that respects the $G$-actions.  A {\it solution} to the patching problem is a torsor over $F$ that induces the torsors $X_i$ compatibly via base change.  That is, a solution is given by a $G$-torsor $X = \Spec(S)$ over $F$
together with $F_i$-isomorphisms of $G_{F_i}$-torsors $\gamma_i:F_i \times_F X \to X_i$ for all $i \in \I_v$ such that 
the two maps $\nu_{ik}\circ (\id_{F_k} \otimes_{F_i} \gamma_i)  : F_k \times_F X\to X_k$ agree, for $i=l_k,r_k$.
Patching problems and solutions can also be described
on the level of coordinate rings.  If we write 
$\Theta_{ki} = (\nu_{ik}^*)^{-1}:F_k\otimes_{F_i} S_i \to S_k$ and
$\Phi_i = (\gamma_i^{-1})^*:F_i\otimes_F S \to S_i$, the compatibility condition is that the two isomorphisms
$\Theta_{kl_k}\circ (\Id_{F_k}\otimes_{F_{l_k}} \Phi_{l_k}), \, \Theta_{kr_k}\circ(\Id_{F_k}\otimes_{F_{r_k}} \Phi_{r_k}):F_k\otimes_{F}S \to S_k$ coincide.  

It was shown at \cite[Proposition~2.2, Theorem~2.3]{HHKtorsor}
that if the factorization inverse system above has the simultaneous factorization property, then up to isomorphism there is a unique solution $X=\Spec(S)$, given on the level of coordinate rings by the inverse limit.  That is, $\displaystyle S = \lim_\leftarrow S_i$, where the limit is over all $i \in \I$; and $\Phi_i$ is induced by the canonical map $S \to S_i$ for $i \in \I_v$.  Thus for $j \in \I_v$, the map $\Phi_j$ sends $a \otimes (x_i)_{i \in \I} \in F_j \otimes_F S$ to $ax_j$ in $S_j$.  Also, since each $\gamma_i$ is an isomorphism of torsors, the co-action homomorphism
$\rho:  S\to S \otimes_F F[G]$ corresponding to the $G$-action on $X$ is the restriction of the product map 
$(\prod \rho_i) \colon \prod S_i \to (\prod S_i)\otimes_F F[G]$, where 
$\rho_i \colon S_i\to S_i \otimes_{F_i} F_i[G]\cong S_i \otimes_{F} F[G]$ is the co-action map corresponding to the $G_{F_i}$-action on $\Spec(S_i)$ for $i \in \I$.

The notions of patching problems and solutions carry over to the differential situation.  Namely, 
consider a differential factorization inverse system $\{F_i\}_{i \in \I}$ over a differential field $F$ of characteristic zero, and let $G$ be a linear algebraic group over $K:= C_F$.  A system of differential $G_{F_i}$-torsors $X_i$ such that the maps $\nu_{ik}$ as above are differential isomorphisms will be called a
{\it patching problem of differential $G_F$-torsors}.  
(Recall that in our setup, $G$ is defined over the constants $K$ of $F$.  Also, recall that the derivation on $F_i[G]$ is defined to extend the given derivation on $F_i$ and to be constant on $K[G]$.)
Similarly, if $X$ is a differential $G_F$-torsor and each $\gamma_i$ as above is an isomorphism of differential torsors, we have a {\it solution} to this differential patching problem.
The result from \cite{HHKtorsor} cited above then carries over to this situation, since the solution given by \cite{HHKtorsor} inherits a unique compatible derivation on the coordinate ring by restriction:

\begin{thm} \label{thm diff solution} Let $\{F_i\}_{i \in \I}$ be a differential factorization inverse system over $F$ with the simultaneous factorization property and let $K=C_F$ be the field of constants of $F$. Let $G$ be a linear algebraic group over $K$ and 
let $(\{S_i \}_{i \in \I}, \{\Theta_{kl_k}, \Theta_{kr_k}\}_{k \in \I_e})$ define a patching problem of differential $G_F$-torsors. Then up to differential isomorphism, there exists a unique solution $(S, \{\Phi_i\}_{i \in \I_v})$, given by $\displaystyle S=\lim_\leftarrow S_i$, with $\Phi_i$ induced by the natural map $S \to S_i$, and with the $G$-action and derivation on $S$ given by restriction from those on the rings $S_i$. 
\end{thm}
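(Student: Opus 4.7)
The plan is to reduce to the non-differential patching theorem from \cite[Proposition~2.2, Theorem~2.3]{HHKtorsor}, which already gives the unique $G_F$-torsor $X = \Spec(S)$ with $S = \varprojlim S_i$ satisfying the purely algebraic compatibility conditions. The remaining work is to endow $S$ with a canonical derivation extending the one on $F$, show that this turns $X$ into a differential $G_F$-torsor, and verify the uniqueness up to differential isomorphism.

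For existence, I would define $\del \colon S \to S$ componentwise: for $s = (s_i)_{i \in \I} \in \varprojlim S_i$, set $\del(s) := (\del_i(s_i))_{i \in \I}$, where $\del_i$ denotes the given derivation on $S_i$. To see that the right-hand side lies in the inverse limit, I need to check that for every edge $k \in \I_e$ and vertex $i \succ k$, the image of $\del_i(s_i)$ in $S_k$ agrees with $\del_k(s_k)$. Since the canonical map $S_i \to S_k$ factors as $s_i \mapsto \Theta_{ki}(1 \otimes s_i)$ and $\Theta_{ki} \colon F_k \otimes_{F_i} S_i \to S_k$ is by hypothesis a differential isomorphism, I compute
\[
\del_k\bigl(\Theta_{ki}(1 \otimes s_i)\bigr) = \Theta_{ki}\bigl(\del_{F_k}(1) \otimes s_i + 1 \otimes \del_i(s_i)\bigr) = \Theta_{ki}(1 \otimes \del_i(s_i)),
\]
which is exactly the image of $\del_i(s_i)$ in $S_k$. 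Thus $\del$ is a well-defined derivation on $S$; the Leibniz rule and additivity are inherited componentwise. Moreover $\del$ restricts on $F \hookrightarrow S$ to the given derivation on $F$, since each $F \hookrightarrow F_i \hookrightarrow S_i$ is differential.

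Next I would show that $X = \Spec(S)$ is a differential $G_F$-torsor, i.e., that the co-action $\rho \colon S \to S \otimes_F F[G]$ is a differential homomorphism. As recalled in the excerpt, $\rho$ is the restriction of $\prod \rho_i \colon \prod S_i \to \prod (S_i \otimes_F F[G])$ to $S \subseteq \prod S_i$, landing in $S \otimes_F F[G] \subseteq \prod (S_i \otimes_F F[G])$ by the non-differential patching statement. Each $\rho_i$ is differential because $X_i$ is a differential $G_{F_i}$-torsor (elements of $K[G]$ are constants in both source and target). Hence $\prod \rho_i$ is differential with respect to the componentwise derivations, and restricting to $S \to S \otimes_F F[G]$ gives a differential homomorphism. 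This shows $X$ is a differential $G_F$-torsor, and that each $\Phi_i$ is a differential isomorphism since it is induced by the canonical projection $S \to S_i$, which is differential by construction.

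For uniqueness, suppose $(S', \{\Phi_i'\})$ is another solution. By \cite[Theorem~2.3]{HHKtorsor} there is a unique isomorphism of $G_F$-torsors $S \to S'$ compatible with all $\Phi_i$, $\Phi_i'$. Since each $\Phi_i'$ is differential and the derivation on $S_i$ is fixed, the derivation on $S'$ is forced to equal the componentwise one, making this $G_F$-torsor isomorphism differential. The main technical point is really the compatibility check in the displayed equation above; the rest of the argument is bookkeeping, and the result then follows directly from the non-differential patching theorem.
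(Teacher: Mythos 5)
Your proposal is correct and follows essentially the same route as the paper, which simply observes that the non-differential solution of \cite{HHKtorsor} ``inherits a unique compatible derivation on the coordinate ring by restriction''; you have merely written out the componentwise compatibility check across edges (using that the $\Theta_{ki}$ are differential) and the verification that the restricted co-action is differential, which is exactly the content the paper leaves implicit.
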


In particular, this holds in the case of differential diamonds $(F,F_1,F_2,F_0)$ with the factorization property.  There, we may identify $S$ with the intersection $\Theta_{01}(S_1) \cap \Theta_{02}(S_2) \subseteq S_0$. With this identification, the isomorphism $\Phi_i:F_i \otimes_F S \to S_i$ sends $ a\otimes s$ to $a\cdot \Theta_{0i}^{-1}(s)$ for $i =1,2$, for $a \in F_i$ and $s \in S$.

\subsection{Patching Picard-Vessiot rings}\label{subsec PVR}

Using the above theorem, we prove a result about patching Picard-Vessiot rings, by relying on the relationship between Picard-Vessiot rings and differential torsors.  This enables us to construct Picard-Vessiot extensions of $F$ with a given group $G$, by using Picard-Vessiot extensions of appropriate overfields $F_i$ with differential Galois groups $G_i$ that generate $G$.

\begin{lem}\label{lem ideal cor}
 Let $R$ be a simple differential ring with field of constants $K$ and let $A$ be a $K$-algebra. We consider $A$ as a constant differential ring. Then there is a bijection between the differential ideals in $R\otimes_K A$ and the ideals in $A$, given by $I \mapsto I \cap A$ for differential ideals $I \trianglelefteq R\otimes_K A$, with inverse
$J \mapsto R \otimes_K J$ for ideals $J \trianglelefteq A$. 
\end{lem}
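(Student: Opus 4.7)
The plan is to show the two maps $J \mapsto R \otimes_K J$ and $I \mapsto I \cap A$ are mutually inverse by verifying three things: (i) $R \otimes_K J$ is always a differential ideal; (ii) $(R \otimes_K J) \cap A = J$ for every ideal $J \trianglelefteq A$; (iii) $I = R \otimes_K (I \cap A)$ for every differential ideal $I \trianglelefteq R \otimes_K A$. Parts (i) and (ii) are formal, and the whole difficulty is concentrated in (iii), which is the classical ``constant extension'' argument.

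For (i), note that since $A$ is constant, the derivation on $R \otimes_K A$ sends $r \otimes a$ to $\partial(r) \otimes a$; hence $R \otimes_K J$ is stable under $\partial$ (and clearly is an ideal). For (ii), use that $K$ is a field so $R$ is $K$-flat: the short exact sequence $0 \to J \to A \to A/J \to 0$ tensors to $0 \to R \otimes_K J \to R \otimes_K A \to R \otimes_K (A/J) \to 0$, and the inclusion $A \hookrightarrow R \otimes_K A$ composed with the quotient sends $a \mapsto 1 \otimes \bar a$, with kernel exactly $J$.

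For (iii), I would first reduce to the case $I \cap A = 0$ (where the goal becomes $I = 0$). By flatness, $R \otimes_K (I \cap A)$ embeds in $I$, and passing to the quotient gives a differential ideal $\bar I$ of $R \otimes_K (A / (I \cap A))$; a diagram chase shows $\bar I \cap (A/(I \cap A)) = 0$, so indeed one may replace $(I,A)$ by $(\bar I, A/(I \cap A))$.

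So, assume $I \cap A = 0$ and suppose for contradiction that $I$ contains a nonzero element. Pick $0 \neq x \in I$ with the minimal possible number $n$ of summands when written as $x = \sum_{i=1}^{n} r_i \otimes a_i$ with $a_1, \dots, a_n \in A$ that are $K$-linearly independent and all $r_i \in R$ nonzero. Define
\[
\mathfrak{a} = \{r \in R \mid \text{there exist } r_2, \dots, r_n \in R \text{ with } r \otimes a_1 + \textstyle\sum_{i \geq 2} r_i \otimes a_i \in I\}.
\]
Because the $a_i$ are constants, $\mathfrak{a}$ is closed under $\partial$, and closure under addition and $R$-multiplication is immediate; hence $\mathfrak{a}$ is a differential ideal of $R$. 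It contains $r_1 \neq 0$, so by simplicity of $R$ we get $\mathfrak{a} = R$, and in particular $1 \in \mathfrak{a}$. Pick a corresponding element $y = 1 \otimes a_1 + \sum_{i \geq 2} s_i \otimes a_i \in I$. Then $\partial(y) = \sum_{i \geq 2} \partial(s_i) \otimes a_i \in I$ has strictly fewer than $n$ terms, so by minimality $\partial(y) = 0$, forcing each $s_i \in C_R = K$. Thus $y = 1 \otimes \bigl(a_1 + \sum_{i\geq 2} s_i a_i\bigr)$ lies in $A \cap I = 0$, yet $a_1 + \sum s_i a_i \neq 0$ by the $K$-linear independence of the $a_i$, a contradiction. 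The main obstacle, as expected, is this minimal-length/simplicity argument in (iii); everything else is either flatness or formal algebra.
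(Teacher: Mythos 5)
Your proof is correct. The paper itself gives no argument for this lemma---it simply cites Kovacic (Prop.\ 5.6 of his paper on strongly normal extensions) and Maurischat (Lemma 10.7) as the standard references---and the minimal-length argument you give in step (iii), using simplicity of $R$ to force a coefficient to be $1$ and then $C_R=K$ to push the element into $A\cap I$, is precisely the classical proof found in those sources, with the flatness reductions in (i), (ii) and the passage to $I\cap A=0$ handled correctly.
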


\begin{proof}
 This is a well-known statement in differential algebra and it follows as in \cite[Prop. 5.6]{Kovacic2}. See also \cite[Lemma 10.7]{Maurischat}. 
\end{proof}

In the following, we use the notation $\Ind_H^G(R)$ for the coordinate ring of $\Ind_H^G(\Spec(R))$, when $H$ is a subgroup of $G$ and $\Spec(R)$ is an $H$-torsor. By the comment before Remark \ref{rem ind GG}, $\Ind_H^G(R)$ is the ring of invariants $(R\otimes_K K[G])^H$ with respect to a certain $H$-action on $\Spec(R)\times G$. A definition of the ring of invariants can be found in the second paragraph of Section \ref{subsec Gspaces}. For a $G$-space $\Spec(R)$ (and in particular for $G$ itself) we also use the notion of $H$-stable ideals in $R$ for subgroups $H$ of $G$ (see Lemma \ref{lemma:invariant ideal} and the subsequent paragraph).

\begin{lem} \label{lem patching}
Let $F \subseteq F_1 \subseteq F_0$ be extensions of differential fields, 
and let $G$ be a linear algebraic group over $K := C_F$.
Let $S$ be a differential ring containing $F$, and for $i=0,1$ write
$S_i = F_i \otimes_F S$.  Let $H_1$ be a closed subgroup of $G$, and
suppose that 
$S_1 = \Ind_{H_1}^G(R_1)$ for a Picard-Vessiot ring $R_1 \subseteq F_0$ over $F_1$
with differential Galois group $(H_1)_{C_{F_1}}$.
Let $I$ be a differential ideal of $S$, and 
for $i=0,1$ write $I_i := F_i \otimes_F I \subseteq S_i$.  View 
$I_0$ as an ideal in $F_0[G]$ via the 
isomorphism $\Theta:S_0 = F_0 \otimes_{F_1} \Ind_{H_1}^G(R_1) \to F_0[G]$ given in 
Lemma~\ref{lem theta}.   
For $i=0,1$, consider the right action of $G_{F_i}$ on itself given by $(g',g) \mapsto g^{-1}g'$.
Then
\renewcommand{\theenumi}{(\alph{enumi})}
\renewcommand{\labelenumi}{(\alph{enumi})}
\begin{enumerate}
\item \label{lem stable H}
For every closed subgroup $H \subseteq G$, the ideal $I_0 \subseteq F_0[G]$ is $H$-stable if and only if
$I_0 \cap F_1[G] \subseteq F_1[G]$ is $H$-stable.
\item \label{lem stable H1}
The ideals $I_0 \subseteq F_0[G]$ and $I_0 \cap F_1[G] \subseteq F_1[G]$ are $H_1$-stable.
\end{enumerate}
\end{lem}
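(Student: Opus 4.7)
The plan is to show that $\Theta(I_0) = J_1 \cdot F_0[G]$ and $I_0 \cap F_1[G] = J_1 \cdot F_1[G]$ for a single $H_1$-stable ideal $J_1 \subseteq K[G]$ (writing $K$ for the field of constants of $R_1$, which contains $C_F$ and coincides with it in the applications of interest). Parts (a) and (b) will then follow by faithful flatness.

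To construct $J_1$, observe that $I_1 := F_1 \otimes_F I \subseteq S_1$ is a differential ideal (since $I$ is and $F_1/F$ is flat), and hence so is $\widetilde I_1 := I_1 \cdot (R_1 \otimes_K K[G])$ in $R_1 \otimes_K K[G]$ by the Leibniz rule. Since $R_1$ is simple differential with constants $K$, Lemma~\ref{lem ideal cor} yields $\widetilde I_1 = R_1 \otimes_K J_1$ with $J_1 := \widetilde I_1 \cap K[G]$. The diagonal $H_1$-action on $R_1 \otimes_K K[G]$ is by ring automorphisms and fixes $I_1 \subseteq S_1 = (R_1 \otimes_K K[G])^{H_1}$ pointwise, so it preserves $\widetilde I_1$; intersecting with the $H_1$-stable subring $1 \otimes K[G] \subseteq R_1 \otimes_K K[G]$ then yields $H_1$-stability of $J_1$.

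Next I would identify $\Theta(I_0)$. The formula $\Theta(a \otimes s) = a \cdot s$ from Lemma~\ref{lem theta} shows that $\Theta$ restricts on $1 \otimes S_1 \subseteq F_0 \otimes_{F_1} S_1$ to the composition $S_1 \hookrightarrow R_1 \otimes_K K[G] \hookrightarrow F_0[G]$ (the second inclusion using $R_1 \subseteq F_0$). Hence $\Theta(I_0) = \Theta(F_0 \otimes_{F_1} I_1)$ is the $F_0[G]$-ideal generated by the image of $I_1$; since inside $F_0[G]$ one has $R_1 \otimes_K K[G] \subseteq F_0[G]$, this equals the $F_0[G]$-ideal generated by $\widetilde I_1 = R_1 \otimes_K J_1$, namely $F_0 \otimes_K J_1 = J_1 \cdot F_0[G]$. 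Contracting via faithful flatness of $F_0[G]$ over $F_1[G]$ then yields $I_0 \cap F_1[G] = J_1 \cdot F_1[G]$.

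Both parts now follow. For (b), the ideals $J_1 \cdot F_0[G]$ and $J_1 \cdot F_1[G]$ are $H_1$-stable because $J_1$ is. For (a), faithful flatness of $F_0[G]$ (resp.\ $F_1[G]$) over $K[G]$ implies $(J_1 \cdot F_\ast[G]) \cap K[G] = J_1$, so for any closed subgroup $H \subseteq G$, $H$-stability of $I_0 = J_1 \cdot F_0[G]$ (resp.\ of $I_0 \cap F_1[G] = J_1 \cdot F_1[G]$) is equivalent to $H$-stability of $J_1$; both conditions in (a) are therefore equivalent. The main subtlety is the identification $\Theta(I_0) = J_1 \cdot F_0[G]$, which requires carefully matching the image of $I_1$ under the embedding $S_1 \hookrightarrow F_0[G]$ with the ideal $\widetilde I_1$ produced by Lemma~\ref{lem ideal cor}.
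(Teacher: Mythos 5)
Your proof is correct and follows essentially the same route as the paper's: both identify $\Theta(I_0)$ via Lemma~\ref{lem theta} as the extension of the ideal generated by $I_1$ in $R_1\otimes_{F_1}F_1[G]$, use the differential simplicity of $R_1$ together with Lemma~\ref{lem ideal cor} to descend that extended ideal to an ideal $J_1$ of constants, recover $I_0\cap F_1[G]$ by faithfully flat contraction, and derive $H_1$-stability from the fact that $I_1$ lies in the $H_1$-invariant subring. The only cosmetic difference is that you descend $J_1$ all the way to $K_1[G]$ (with $K_1=C_{F_1}$, despite the notational clash with $K=C_F$ that you flag) and deduce both parts symmetrically from stability of $J_1$, whereas the paper sets $J_1=\widetilde{J}_1\cap F_1[G]$ and proves part~(a) by showing directly that $I_0$ is the extension of $I_0\cap F_1[G]$.
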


\begin{proof}
The forward direction of part~\ref{lem stable H}
is clear. For the converse direction, it suffices to show that $(I_0\cap F_1[G])F_0[G]=I_0$.

Let $\vartheta \colon R_1\otimes_{F_1} F_1[G]\to F_0[G]$ be the differential homomorphism 
induced by the inclusions of $R_1$ and $F_1$ into $F_0$.
By Lemma~\ref{lem theta}, $\Theta|_{S_1}=\vartheta|_{S_1}$.
Since $\Ind_{H_1}^G(R_1) = (R_1 \otimes_{F_1} F_1[G])^{H_1}$ by
Proposition~\ref{prop: universal property of ind},
we have the following commutative diagram of differential $F$-algebras, where 
$\Phi_i:S \to S_i$ is the natural map for $i=0,1$:
\[
\xymatrix  {
F_0[G] 
& F_1[G] \ar@{_{(}->}[l] \ar@{^{(}->}[d]_\iota \\
S_0 \ar[u]^{\wr}_\Theta
& R_1 \otimes_{F_1} F_1[G] 
\ar[ul]_-{\vartheta} 
\\
S \ar[u]_{\Phi_0} \ar[r]^-{\Phi_1} & S_1 = (R_1 \otimes_{F_1} F_1[G])^{H_1} 
\ar@{_{(}->}[ul]
\ar@{_{(}->}[u] 
}
\] 
Note that $I_i$ is the ideal of $S_i$ generated by $\Phi_i(I)$, for $i=0,1$.
 Write $\tilde J_1$ for the ideal of $R_1 \otimes_{F_1} F_1[G]$ generated by $I_1$.  
If we identify $S_0$ with $F_0[G]$ via $\Theta$, then $I_0$ is identified with the 
ideal of $F_0[G]$ generated by $\Theta(I_1)$, or equivalently 
the ideal generated by $\vartheta(\tilde J_1)$. 

Let $K_1=C_{F_1}$.  As $R_1$ is differentially simple, every ideal in $R_1\otimes_{F_1} F_1[G]\cong R_1\otimes_{K_1} K_1[G]$ is generated by its intersection with $K_1[G]$ by Lemma~\ref{lem ideal cor};
and in particular it is generated by its intersection with $F_1[G]$. Thus $J_1:=\tilde J_1\cap {F_1}[G]$ generates $\tilde J_1$ as an ideal of $R_1\otimes_{F_1} F_1[G]$.
But $\vartheta(\tilde J_1)$ generates $I_0$ as an ideal of $F_0[G]$; hence 
$I_0=J_1F_0[G]$.
Also, $J_1=I_0\cap F_1[G]$ since 
$F_0[G]\cong F_0\otimes_{F_1} F_1[G]$ is faithfully flat over $F_1[G]$
(see \cite[Theorem~7.5(b)]{Mats}).  Thus 
$(I_0\cap F_1[G])F_0[G]=I_0$, proving part~\ref{lem stable H}.

The ideal $\tilde J_1 \subseteq R_1 \otimes_{F_1} F_1[G]$ is $H_1$-stable
(with respect to the action given in Proposition~\ref{prop: universal property of ind})
since it is the extension of the ideal $I_1 \subseteq S_1 = (R_1 \otimes_{F_1} F_1[G])^{H_1}$.
Since the projection morphism $\iota^*:(\Spec R_1) \times_{F_1} G_{F_1} \to G_{F_1}$ is $H_1$-equivariant,
and since $\tilde J_1$ is the extension of $J_1 \subseteq F_1[G]$ with respect to $\iota$, 
it follows that $J_1$ is $H_1$-stable (by stability condition~\ref{stable cond b} of 
Lemma~\ref{lemma:invariant ideal}).    
The $H_1$-stability of $J_1 = I_0 \cap F_1[G] \subseteq F_1[G]$
implies the $H_1$-stability of 
$I_0 \subseteq F_0[G]$, by part~\ref{lem stable H}.
This proves part~\ref{lem stable H1}.
\end{proof}

\begin{thm}\label{thm patching PVR}
In the context of Theorem~\ref{thm diff solution}, suppose that $G$ is generated by a set of closed $K$-subgroups $H_i$, for $i\in \I_v$; and that for each $i \in \I_v$ there is a Picard-Vessiot ring $R_i/F_i$ with differential Galois group $(H_i)_{K_i}$ such that $S_i = \Ind_{H_i}^G(R_i)$, where $K_i = C_{F_i}$.  Suppose also that $S_k = F_k[G]$ for all $k \in \I_e$; and suppose that for each $i \succ k$ there is an embedding $R_i \hookrightarrow F_k$ of differential rings. Suppose moreover that the map $\Theta_{ki} \colon F_{k}\otimes_{F_i}S_i\to S_k$ is the isomorphism of differential $G_{F_{k}}$-torsors given in 
Lemma~\ref{lem theta} with respect to that embedding (for all $i$).  Then $S/F$ is a Picard-Vessiot ring with differential Galois group $G$.
\end{thm}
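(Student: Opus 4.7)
The plan is to verify the hypotheses of Corollary~\ref{cor PV equiv}\ref{PV equiv a}: namely that $\Spec(S)$ is a simple differential $G_F$-torsor with $C_S=K$. The differential $G_F$-torsor structure on $\Spec(S)$ follows immediately from Theorem~\ref{thm diff solution}, which applies directly to the given patching data and yields $S=\varprojlim_{i\in \I} S_i$, with derivation and $G$-action obtained by restriction from the $S_i$.

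The main work is to prove that $S$ is a simple differential ring. Suppose toward contradiction that $I\subseteq S$ is a nonzero proper differential ideal, and for each edge $k\in\I_e$ set $I_k := F_k\otimes_F I\subseteq S_k=F_k[G]$. Faithful flatness of $F\subseteq F_k$ ensures $I_k$ is proper; the inverse-limit description together with the faithful flatness of each $S_i\hookrightarrow F_k\otimes_{F_i}S_i$ and the connectedness of $\Gamma$ shows $S\hookrightarrow \prod_{k\in\I_e} S_k$ is injective, so $I_k\neq 0$ for some $k$. Applying Lemma~\ref{lem patching}\ref{lem stable H1} first with the chain $F\subseteq F_{l_k}\subseteq F_k$ (using $R_{l_k}, \Theta_{k,l_k}$) and then with $F\subseteq F_{r_k}\subseteq F_k$ (using $R_{r_k}, \Theta_{k,r_k}$), I conclude that $I_k\subseteq F_k[G]$ is both $H_{l_k}$- and $H_{r_k}$-stable under the intrinsic $G$-action on the torsor $S_k$; the two statements refer to the same action because both $\Theta$-maps are $G$-equivariant. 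Since $I_k$ is a differential ideal of $F_k[G]=F_k\otimes_K K[G]$, Lemma~\ref{lem ideal cor} yields $I_k=F_k[G]\cdot J_k$, where $J_k := I_k\cap C_{F_k}[G]$ is also $\langle H_{l_k},H_{r_k}\rangle$-stable (the $H_i$-actions being $K$-linear and commuting with the derivation). Analogously, at each vertex $i\in\I_v$, applying Lemma~\ref{lem ideal cor} to the simple differential ring $R_i\otimes_{K_i}K_i[G]$ shows that $I_i$ corresponds to an $H_i$-stable ideal $J_i\subseteq K_i[G]$, and faithful flatness of $K_i[G]\hookrightarrow C_{F_k}[G]$ identifies $J_k=C_{F_k}[G]\cdot J_i$ for each $i\in\{l_k,r_k\}$. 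Exploiting the connectedness of $\Gamma$ and the generation hypothesis on the $H_i$, these compatibilities force the family $\{J_i\}_{i\in\I_v}$ to descend to a common $G$-stable ideal of $K[G]$. Since $G$ acts transitively on itself, the only $G$-stable ideals of $K[G]$ are $0$ and $K[G]$, whence $I_k=0$ or $I_k=F_k[G]$---contradicting that $I_k$ is nonzero and proper. Hence $I=0$ and $S$ is simple.

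For $C_S=K$, an analogous descent argument applies. Simplicity of $R_i$ with $C_{R_i}=K_i$ gives $C_{S_i}=K_i[G]^{H_i}=K_i[H_i\backslash G]$, while $C_{S_k}=C_{F_k}[G]$. A constant $s\in C_S$ yields a compatible family whose image in each $C_{S_k}$ is $\langle H_{l_k},H_{r_k}\rangle$-invariant; the same connectivity-and-generation descent forces the image of $s$ in each $S_k$ to lie in $C_{F_k}[G]^G=C_{F_k}$, placing $s$ in $F=\varprojlim F_i$ and hence in $C_F=K$. With both $S$ simple and $C_S=K$ in hand, Corollary~\ref{cor PV equiv}\ref{PV equiv a} completes the proof.

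The principal \textbf{obstacle} is the descent step at the heart of both parts: showing that the locally defined, $H_i$-stable objects over the fields $K_i$ descend to a single $G$-stable object over $K$. This requires a careful combination of faithful flatness of the field extensions $K\subseteq K_i\hookrightarrow C_{F_k}$, the connectedness of $\Gamma$, and the hypothesis that the $H_i$ generate $G$.
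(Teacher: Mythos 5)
Your overall skeleton matches the paper's: reduce to showing $C_S=K$ and differential simplicity via Corollary~\ref{cor PV equiv}\ref{PV equiv a}, get the torsor structure from Theorem~\ref{thm diff solution}, and use Lemma~\ref{lem patching} together with connectedness of $\Gamma$ and the generation hypothesis. But the step you yourself flag as ``the principal obstacle'' --- that the locally defined $H_i$-stable ideals $J_i\subseteq K_i[G]$ ``descend to a common $G$-stable ideal of $K[G]$'' --- is not carried out, and it is both stronger than what is needed and not achievable by the faithful-flatness bookkeeping you gesture at. A priori the $J_i$ live over different constant fields $K_i=C_{F_i}$ with no common overfield except along edges, so ``descending to $K[G]$'' would require a fields-of-definition argument that you do not supply; and you would still need to know the descended ideal is proper and nonzero to reach your contradiction. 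The same unproven propagation step is what your constants argument leans on.

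The paper's proof shows this descent is unnecessary. The mechanism is purely graph-theoretic stabilizer propagation over the fields $F_i$ and $F_k$ themselves: Lemma~\ref{lem patching}\ref{lem stable H} says that for each incident pair $i\succ k$ and \emph{every} closed subgroup $H\subseteq G$, the ideal $I_k\subseteq F_k[G]$ is $H$-stable if and only if $J_i:=I_k\cap F_i[G]\subseteq F_i[G]$ is. Since each vertex can belong to several edges and $\Gamma$ is connected, all the ideals $I_k$ ($k\in\I_e$) and $J_i$ ($i\in\I_v$) are stabilized by exactly the same collection of subgroups of $G$; Lemma~\ref{lem patching}\ref{lem stable H1} contributes $H_i$-stability at each vertex $i$, so every $I_{k}$ is stable under all the $H_i$, hence $G$-stable by Lemma~\ref{lem invariants}\ref{stable ideals b} applied over $F_{k}$, hence zero by Lemma~\ref{lem invariants}\ref{stable ideals c} --- with no passage to $K_i[G]$ or $K[G]$ at all. (The constants step is handled the same way: each component $x_i$ lies in $F_i[G]$, is invariant under all $H_j$ by propagation, hence lies in $F_i[G]^{G}=F_i$ by Lemma~\ref{lem invariants}\ref{stable ideals a}, and then $x\in\varprojlim F_i=F$.) A secondary point: Lemma~\ref{lem patching} is stated for the action $(g',g)\mapsto g^{-1}g'$, not the torsor action, so your remark that ``the two statements refer to the same action because both $\Theta$-maps are $G$-equivariant'' conflates two different actions; the paper is careful to work with the former throughout this proof.
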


\begin{proof} 
By Theorem~\ref{thm diff solution}, $\Spec(S)$ is a differential $G_F$-torsor and $S$ is the inverse limit of $\{S_i\}_{i \in \I}$ with respect to the maps $\Theta_{ki}$.
As in Theorem~\ref{thm diff solution}, we write $\Phi_i:S \to S_i$ for
the projection onto the $i$-th component. 
We identify $S$ with $F \otimes_F S \subseteq F_i \otimes_F S \subseteq F_k \otimes_F S$, and similarly identify $S_i$ with $F_i \otimes_{F_i} S_i \subseteq F_k \otimes_{F_i} S_i$.  By Proposition \ref{prop: universal property of ind}, 
$S_i=\Ind_{H_i}^G(R_i)$ consists of the invariants of 
$R_i\otimes_{F_i} F_i[G]$ under the $(H_i)_{F_i}$-action on $\Spec(R_i)\times G_{F_i}$ given by the formula $(x,g).h=(x.h, h^{-1}g)$.  This action restricts to an $H_i$-action on $G$ given by $g.h=h^{-1}g$, which is also the restriction of the (right) $G$-action on $G$ given by $g.g'=g'^{-1}g$.  

To prove the theorem, it suffices to show that $C_S=K$ and that $S$ is differentially simple, by 
Corollary~\ref{cor PV equiv}\ref{PV equiv a}.

\smallskip 

\textit{First step:} We show $C_S=K$. 

Let $x\in S$ be constant, and write $x=(x_i)_{i\in \I}$ with $x_i \in S_i$ constant.  
We wish to show that $x \in K$.
For $i \in \I_v$, the constants of $R_i\otimes_{F_i} F_i[G]=R_i\otimes_{K_i} K_i[G]$ equal $K_i[G]$, since $C_{R_i}=K_i$;
and thus $C_{S_i}=K_i[G]^{(H_i)_{K_i}}\subseteq F_i[G]^{(H_i)_{F_i}}$. 
Hence $x_i \in F_i[G]^{(H_i)_{F_i}} \subseteq F_i[G]$.  Suppose that $i \succ k$.  Then $x_k$ is the image of $x_i$ under the natural inclusion 
$F_i[G] \to F_k[G]$ (see Lemma~\ref{lem theta}).  The co-action on $F_i[G]$ is the restriction
of the co-action on $F_k[G]$; so
$x_i$ is invariant under a given subgroup 
$H \subseteq G$ if and only if $x_k$ is invariant under $H$.  Since the graph $\Gamma$ associated to $\{F_i\}_{i \in \I}$ is connected, it follows that all the $x_i$ (for $i \in \I$) are invariant under the same subgroups of $G$.  As a consequence, each of these elements is invariant under $H_j$ for every $j \in \I_v$, since $x_j$ is.

Thus $x_i \in \bigcap \limits_{j \in \I_v} F_i[G]^{(H_j)_{F_i}}$ for all $i \in \I_v$, and this intersection equals $F_i$ by Lemma~\ref{lem invariants}\ref{stable ideals a}. Hence $x=(x_i)_{i \in \I} \in \varprojlim\limits_{i \in \I} F_i=F$ and thus $x \in C_F=K$.

\smallskip

\textit{Second step:} We show that $S$ is differentially simple.
Let $I$ be a proper differential ideal of $S$.  It suffices to show that $I = (0)$.  

For an edge $k \in \I_e$ and a vertex $i \in \I_v$ of $k$, 
let $I_i$ be the ideal of $S_i$ generated by $\Phi_i(I)$, and let $I_k$ be the ideal 
of $S_k = F_k[G]$ generated by $\Theta_{ki}(I_i)$ (which is independent of the choice of vertex $i$ of $k$).
Let $J_i = I_k \cap F_i[G]$.  We may now apply
Lemma~\ref{lem patching}, with $F_i,F_k$ playing the roles of $F_1,F_0$, and where we consider
the right action of $G$ on itself given by  $(g',g) \mapsto g^{-1}g'$.
By part~\ref{lem stable H} of the lemma, for any subgroup $H$ of $G$, $J_i$ is $H$-stable under 
this action if and only if $I_k$ is.  Since this holds for all such pairs $(k,i)$, and since the graph is connected, it follows that all the ideals $J_i \trianglelefteq F_i[G]$ (for $i \in \I_v$) and $I_k \trianglelefteq F_k[G]$ (for $k \in \I_e$) are stabilized by the same subgroups of $G$.  
But by part~\ref{lem stable H1} of the lemma, 
$J_i$ and $I_k$ are $H_i$-stable, with respect to the above action.
Thus for {\em every} $k' \in \I_e$, the ideal $I_{k'} \trianglelefteq F_{k_0}[G]$ is stable under $H_i$ for all $i \in \I_v$.  But the subgroups $H_i$ generate $G$.  So $I_{k'}$ is $G_{F_{k'}}$-stable by Lemma~\ref{lem invariants}\ref{stable ideals b} and thus $I_{k'}=(0)$ by Lemma~\ref{lem invariants}\ref{stable ideals c}.  Hence $I \subseteq I_{k'}$ is also the zero ideal, as asserted.
\end{proof}

As an example application, we show how Theorem \ref{thm patching PVR} can be applied to show that $\SL_2$ is a differential Galois group over $F$ under the assumption that there exists a differential diamond $(F,F_1,F_2,F_0)$ with the factorization property such that $F_0$ contains ``logarithmic elements'' over $F_1$ and $F_2$. 

\begin{ex} \label{ex SL2} Let $(F,F_1,F_2,F_0)$ be a differential diamond with the factorization property. Assume that there exist Picard-Vessiot rings $R_1/F_1$ and $R_2/F_2$ 
with differential Galois groups the additive group $\Ga$ and such that $R_1\subseteq F_0$ and $R_2\subseteq F_0$. Then there is a Picard-Vessiot ring $S/F$ with differential Galois group $\SL_2$. Indeed, let $H_1$ and $H_2$ be the subgroups of $\SL_2$ of upper and lower triangular matrices whose diagonal entries are equal to $1$. These are known to generate $\SL_2$ over any field. The groups $H_1$ and $H_2$ are isomorphic to $\Ga$ and the Tannaka formalism implies that there are $(2\times 2)$-fundamental solution matrices for $R_1/F_1$ and $R_2/F_2$ such that the corresponding representations of the differential Galois groups of $R_1/F_1$ and $R_2/F_2$ yield $H_1$ and $H_2$ (see \cite[Prop. 3.2]{BHH}). 
Since $\SL_2$ is generated by $H_1$ and $H_2$, Theorem \ref{thm patching PVR} implies that there exists a Picard-Vessiot ring $S/F$ with differential Galois group $\SL_2$.

Note that the assumptions on the existence of $R_1/F_1$ and $R_2/F_2$ are equivalent to the existence of ``logarithmic elements'' $y_1, y_2 \in F_0$ such that $y_1 \notin F_1$ and $\del(y_1) \in F_1$ and similarly $y_2 \notin F_2$ and $\del(y_2) \in F_2$. 
\end{ex}

\begin{rem} 
If $F=k((t))(x)$ for some field $k$ of characteristic zero and $\del=d/dx$, a weaker version of Theorem \ref{thm patching PVR} was proven in \cite[Thm 2.4(a)]{BHH} using ad-hoc methods, on the way to solving the inverse differential Galois problem over $F$. However, that theorem only applies to factorization inverse systems $\{F_i\}_{i \in \I}$ over $F$ where the corresponding graph $\Gamma$ is star-shaped, where all fields of constants $C_{F_i}$ equal $C_F$ and where the Picard-Vessiot ring over $F_i$ is trivial for the internal node $i$ of the star. Theorem \ref{thm patching PVR} does not rely on any of these assumptions 
and it can also be applied to more general factorization inverse systems, e.g. of the sort 
that arise in \cite{HHKtorsor}. 
\end{rem}

\subsection{Embedding problems} \label{subsec EP}

As in ordinary Galois theory, one can consider embedding problems in differential Galois theory.  Using the above ideas, we prove a result about split differential embedding problems. 

Let $F$ be a differential field with field of constants $K$.  In analogy with the case of
ordinary Galois theory,  
a {\em differential embedding problem} over $F$ consists of an epimorphism of linear algebraic groups $G \to H$ over $K$, say with kernel $N$, together with
a Picard-Vessiot ring $R/F$ with differential Galois group $H$.  (Notice that we work with Picard-Vessiot rings here rather than Picard-Vessiot extensions, because those rings are needed to define the differential Galois groups as group schemes.) 
If the short exact sequence $1 \to N \to G \to H \to 1$ is split (i.e., if $G$ is a semi-direct product $N \rtimes H$), then we say that the embedding problem is \textit{split} and abbreviate it by $(N\rtimes H, R)$.

A \textit{proper solution} of a differential embedding problem as above is a Picard-Vessiot ring $S/F$ with differential Galois group $G$ and an embedding of differential rings $R \subseteq S$ such that the following diagram commutes: 
\[\xymatrix{ G  \ar@{->}[d]^\cong \ar@{->>}[rr]  && H \ar@{->}[d]_\cong \\
            \underline{\Aut}^\del(S/F) \ar@{->>}[rr]^{\operatorname{res}}&    & \underline{\Aut}^\del(R/F)} \]

\begin{lem}\label{ebp-solution-criterion}
    Let $G\to H$ and $R/F$ determine a differential embedding problem as above and let $S/F$ be a Picard-Vessiot ring with differential Galois group $G$. Then there exists an embedding of differential rings $R\subseteq S$ constituting a proper solution to the differential embedding problem if and only if there exists an isomorphism of differential $H_F$-torsors $\Spec(R) \cong \Spec(S^{N_F})$.
\end{lem}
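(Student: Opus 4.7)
My plan is to exploit the identification of Picard-Vessiot subrings of $S$ with closed normal subgroups of $G$ provided by differential Galois theory; specifically, to identify the embedding $R \subseteq S$ of a proper solution with the canonical embedding $S^{N_F} \subseteq S$. The key auxiliary fact is Proposition~\ref{prop X/N diff}, which tells us that $S^{N_F}$ is a Picard-Vessiot ring over $F$ with differential Galois group $G/N = H$, so that $\Spec(S^{N_F})$ is naturally a simple differential $H_F$-torsor via Corollary~\ref{cor PV equiv}.

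For the forward direction, suppose $R \subseteq S$ is a proper solution. Commutativity of the given square says that for every $K$-algebra $L$, each $\sigma \in N(L) \subseteq G(L) = \underline{\Aut}^\del(S/F)(L)$ has trivial image in $\underline{\Aut}^\del(R/F)(L) = H(L)$; that is, $\sigma$ acts as the identity on $R \otimes_K L \subseteq S \otimes_K L$. Hence $R \subseteq S^{N_F}$. Since both $R$ and $S^{N_F}$ are Picard-Vessiot rings over $F$ with differential Galois group $H$, and the inclusion $R \hookrightarrow S^{N_F}$ is compatible with the given identifications of their Galois groups with $H$, the differential Galois correspondence recalled after Proposition~\ref{prop X/N diff} forces $R = S^{N_F}$. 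The $H_F$-actions on the two sides both come from the quotient $G_F \to H_F$ acting on $\Spec(S)$, so this equality is an isomorphism of differential $H_F$-torsors.

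For the backward direction, given an isomorphism $\phi \colon \Spec(R) \to \Spec(S^{N_F})$ of differential $H_F$-torsors, its dual $\phi^*$ is an $H$-equivariant isomorphism of differential $F$-algebras, so its inverse followed by the differential inclusion $S^{N_F} \hookrightarrow S$ furnishes an embedding $R \hookrightarrow S$ of differential rings. It remains to verify that the embedding problem diagram commutes. Applying Proposition~\ref{prop PVR} to the quotient morphism $\Spec(S) \to \Spec(S^{N_F})$, the restriction homomorphism $\underline{\Aut}^\del(S/F) \to \underline{\Aut}^\del(S^{N_F}/F)$ gets identified with the quotient $G \to G/N = H$. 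The $H$-equivariance of $\phi^*$ then transports this to the analogous identification for $R$ in place of $S^{N_F}$, yielding the required commutativity.

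I expect the principal technical point to be the careful matching of the three isomorphisms with $H$ that appear in the argument---namely the one that is part of the given data of the embedding problem, the one induced by the torsor structure on $\Spec(S^{N_F})$ via Proposition~\ref{prop PVR}, and the one coming from the surjection $G \to G/N$---so that the commutative diagrams actually compose correctly. Once that bookkeeping is carried out, each direction is essentially a translation between the automorphism group scheme formalism and the differential torsor formalism developed in Section~\ref{sec: Differential torsors}.
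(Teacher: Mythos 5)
Your proposal is correct and follows essentially the same route as the paper: the forward direction identifies $R$ with $S^{N_F}$ using the quotient structure of $\Spec(S)$ by $N_F$, and the backward direction transports the isomorphism $\Spec(R)\cong\Spec(S^{N_F})$ into the commutative diagram of automorphism group schemes, exactly as in the paper's proof. One small point: to get $R=S^{N_F}$ (rather than merely $\Frac(R)=\Frac(S^{N_F})$, which is all the Galois correspondence for intermediate fields directly yields), it is cleaner to note that your $H$-equivariant inclusion $R\hookrightarrow S^{N_F}$ dualizes to a morphism of $H_F$-torsors $\Spec(S^{N_F})\to\Spec(R)$, which is automatically an isomorphism --- this is in effect how the paper's appeal to Propositions~\ref{prop X/N diff} and~\ref{prop X/N} works.
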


\begin{proof}
    If $R\subseteq S$ constitutes a proper solution,
then we have isomorphisms of differential $H_F$-torsors $\Spec(R) \cong \Spec(S)/N_F \cong \Spec(S^{N_F})$
by Proposition \ref{prop X/N diff} and Lemma~\ref{prop X/N}\ref{normal part}.

    Conversely, an isomorphism of $H_F$-torsors $\Spec(R) \cong \Spec(S^{N_F})$ gives rise to a commutative diagram
        \begin{equation} \label{eqn: dia for aut} \xymatrix{ \underline{\Aut}^\del(S^{N_F}/F)
     \ar^{\cong}@{->}[rr] &    &      \underline{\Aut}^\del(R/F) \\
    & H \ar_{\cong}[ru] \ar^{\cong}[lu] &}
    \end{equation}
    As the inner diagrams in
    $$
    \xymatrix{
        & \underline{\Aut}^\del(R/F) \\
        \underline{\Aut}^\del(S/F) \ar[ru] \ar[r] & \underline{\Aut}^\del(S^{N_F}/F) \ar_\cong[u] \\
        G\ar@{->>}[r] \ar^\cong[u] & H \ar_\cong[u]
    }    
    $$
    commute, also the outer diagram commutes. It follows from (\ref{eqn: dia for aut}) that we have solved the embedding problem.
\end{proof}   

Differential embedding problems not only provide information about
which differential Galois groups arise over a given differential field,
but also encode information about how the Picard-Vessiot extensions of
that field fit together.
As in ordinary Galois theory, the assertion that all split embedding problems over some field $F$ have proper solutions implies that all groups occur as (differential) Galois groups, since one can take $H$ to be trivial.  
In Theorem~\ref{thm ebp}
below we show that proper solutions to split differential embedding problems can be obtained from solutions to patching problems.

\begin{lem}\label{lem ebp1}
 Let $(F,F_1,F_2,F_0)$ be a differential diamond such that $C_{F_0}=C_F$. Let $R/F$ be a Picard-Vessiot ring such that $R\subseteq F_1$. Then the compositum $F_2R\subseteq F_0$ is a Picard-Vessiot ring over $F_2$ with the same differential Galois group as $R/F$.  Moreover, $F_2R$ is isomorphic to $F_2 \otimes_F R$ under the natural map $F_2 \otimes_F R \to F_2R$.
\end{lem}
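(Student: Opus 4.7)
Write $R = F[Z, \det(Z)^{-1}]$ with $Z\in \GL_n(R)$ a fundamental solution matrix for $\del(z) = Az$, $A \in F^{n\times n}$, so that $G$ is linearized as a closed $K$-subgroup of $\GL_{n,K}$. I first verify $C_{F_2} = K$: the inclusions $K = C_F \subseteq F \subseteq F_2$ give $K \subseteq C_{F_2}$, and the differential embedding $F_2 \hookrightarrow F_0$ combined with $C_{F_0} = K$ gives the reverse inclusion.

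The compositum $F_2 R = F_2[Z, \det(Z)^{-1}] \subseteq F_0$ is a domain, and since its fraction field lies in $F_0$ its constants are contained in $C_{F_0} = K$; combined with $K \subseteq F_2 \subseteq \Frac(F_2 R)$ consisting of constants, this forces $C_{\Frac(F_2 R)} = K = C_{F_2}$. By the alternative characterization of Picard-Vessiot rings recalled in Subsection~\ref{subsec PV}, $F_2 R / F_2$ is therefore a Picard-Vessiot ring for $\del(z) = Az$, with differential Galois group some closed subgroup $H \hookrightarrow \GL_{n,K}$. Since every defining relation of $G$ among the entries of $Z$ already holds in $R$ and hence in $F_2 R$, this gives $H \subseteq G$.

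The main obstacle is to upgrade $H \subseteq G$ to equality. I plan to invoke the standard ``natural irrationality'' consequence of the differential Galois correspondence (\cite[Thm.~4.4]{Dyc}, \cite[Thm.~2.11]{AmanoMasuokaTakeuchi:Hopf}): for a Picard-Vessiot extension $E/F$ with Galois group $G$ and any differential extension $L/F$ with $C_L = C_F$ embedded together with $E$ in a common differential field $M$ with $C_M = C_F$, the compositum $EL/L$ is again Picard-Vessiot and its Galois group is identified under restriction with the closed subgroup of $G$ fixing $E \cap L$ pointwise. Applying this with $E = \Frac(R) \subseteq F_1$, $L = F_2$, and $M = F_0$, the diamond condition forces $E \cap F_2 \subseteq F_1 \cap F_2 = F$, so $E \cap F_2 = F$; the corresponding subgroup of $G$ is all of $G$, giving $H = G$.

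With $H = G$ in hand, I show the natural surjection $\mu\colon F_2 \otimes_F R \twoheadrightarrow F_2 R$ is an isomorphism. Both sides carry differential $G_{F_2}$-torsor structures (the left by base change of $\Spec(R)$, the right by Corollary~\ref{cor PV equiv}\ref{PV equiv b}), and on the generators $Z_{ij}$ both coactions are given by $Z_{ij} \mapsto \sum_k Z_{ik} \otimes T_{kj}$, where $T$ denotes the universal matrix of $G$. Hence $\mu$ is $G_{F_2}$-equivariant, and any equivariant morphism between torsors for the same group is automatically an isomorphism, which completes the proof.
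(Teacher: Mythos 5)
Your proof is correct and takes essentially the same route as the paper's: it establishes that $F_2R$ is a Picard--Vessiot ring over $F_2$ with group $H\subseteq G$ (the paper simply cites \cite[Lemma~1.7]{BHH} for this), deduces $H=G$ from the Galois correspondence via $\Frac(R)\cap F_2\subseteq F_1\cap F_2=F$ --- which is exactly what your appeal to natural irrationality unpacks to --- and concludes $F_2\otimes_F R\cong F_2R$ because an equivariant morphism of $G_{F_2}$-torsors is an isomorphism. The only wrinkle is your justification of $H\subseteq G$ (``every defining relation of $G$ among the entries of $Z$ already holds in $F_2R$''), which is loosely phrased --- the defining equations of $G$ live in $K[\GL_n]$ and act on $Z$ by right translation rather than as relations among the $z_{ij}$ --- but the fact itself is standard and is precisely the content of the cited lemma.
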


\begin{proof}
Let $G$ be the differential Galois group of $R/F$. By \cite[Lemma 1.7]{BHH}, the compositum $F_2R$ is a Picard-Vessiot ring over $F_2$ and its differential Galois group $H$ is a subgroup of $G$. 
By the differential Galois correspondence (see \cite[Theorem~4.4]{Dyc} and the discussion at the beginning of 
Section~\ref{subsec PV}), in order to prove $H=G$
it suffices to show that $\Frac(R)^H=F$.
This equality follows from the containments $\Frac(R)^H\subseteq \Frac(R) \subseteq F_1$ and
$\Frac(R)^H \subseteq \Frac(F_2R)^H=F_2$, since $F_1 \cap F_2=F$.  So the first assertion of the lemma holds.  The final assertion follows from the fact that the natural surjection $F_2 \otimes_F R \to F_2R$ corresponds to a homomorphism of $G_{F_2}$-torsors, which must therefore be an isomorphism.
\end{proof}

\smallskip

Note that instead of citing the Galois correspondence from \cite[Theorem~4.4]{Dyc} in the above proof, one could use
the observation after Proposition~\ref{prop X/N diff}.  Namely,
$X:=\Spec(R)$ is a $G$-torsor over $F$ 
and $X_2:=\Spec(F_2R)$ is an $H$-torsor over $F_2$, where the actions of $H \subseteq G$ are compatible.  By Proposition~\ref{prop X/N}\ref{X/H part} we may consider the quotient schemes $X/H$ and $X_2/H=\Spec(F_2)$.
These are quasi-projective varieties, and are integral since $X$ and $X_2$ are the spectra of integral domains.
We have an inclusion of function fields $F(X/H) \subseteq F(X) = \Frac(R) \subset F_1$,
as well as an inclusion $F(X/H) \subseteq F_2(X_2/H) = F_2$.  Thus $F \subseteq F(X/H) \subseteq F_1 \cap F_2 = F$; i.e., $F(X/H) = F$.  By the second part of the observation after Proposition~\ref{prop X/N diff}), $H=G$.

\smallskip

To avoid burdening the notation, we sometimes drop the base change subscripts on groups in the remainder of this section if there is no possibility of confusion, especially when the group appears in a subscript or superscript.  For example, for field extensions $L/F$, we write expressions such as 
$\Ind_H^G(Y_L)$ for $\Ind_{H_L}^{G_L}(Y_L)$. We consider the following situation:

\begin{hyp}\label{hypothesis}
Let $(F,F_1,F_2,F_0)$ be a differential diamond with the factorization property and write $K=C_F$.  Let $G$ be a linear algebraic group over $K$ of the form $N \rtimes H$.  Let $S_1 = \Ind_N^G(R_1)$ for some Picard-Vessiot ring $R_1/F_1$ with differential Galois group $N_{C_{F_1}}$ such that $R_1 \subseteq F_0$.
Let 
$R/F$ be a Picard-Vessiot ring with differential Galois group $H$ such that $R \subseteq F_1 \subseteq F_0$; write $R_2 = F_2 \otimes_F R$; and let
$S_2 = \Ind_H^G(R_2) = F_2 \otimes_F \Ind_H^G(R)$. 
For $i=1,2$ let $\Theta_i: F_0 \otimes_{F_i} S_i \to F_0[G]$ be the induced differential isomorphism defined in Lemma \ref{lem theta}, using $R_i \subseteq F_0$.
\end{hyp}

\begin{prop}\label{prop S^N} 
In the situation of Hypothesis~\ref{hypothesis}, let $(S,\Phi_1,\Phi_2)$ be the solution of the patching problem $(S_1,S_2,F_0[G], \Theta_1, \Theta_2)$ of differential $G$-torsors over $(F,F_1,F_2,F_0)$ (see Theorem~\ref{thm diff solution}). Then the ring of $N_F$-invariants $S^{N_F}$ is isomorphic to $R$ as a differential $H_F$-torsor.  
\end{prop}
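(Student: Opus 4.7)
The strategy is to recognize $R$ and $S^{N_F}$ as solutions of the same differential $H_F$-torsor patching problem over the diamond $(F,F_1,F_2,F_0)$, and then invoke the uniqueness assertion of Theorem~\ref{thm diff solution}. Writing $Y_j=\Spec R_j$ and $X_j=\Spec S_j$ for $j=1,2$, the plan is first to identify each of the pieces $S_1^{N_{F_1}}$, $S_2^{N_{F_2}}$, and $F_0[G]^{N_{F_0}}$.

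For $S_1^{N_{F_1}}$, the Picard-Vessiot property of $R_1/F_1$ gives $R_1^{N_{F_1}}=F_1$, so the differential morphism $Y_1\hookrightarrow X_1\twoheadrightarrow X_1/N_{F_1}$ factors through the $F_1$-point $Y_1/N_{F_1}=\Spec F_1$, trivializing the $H_{F_1}$-torsor $X_1/N_{F_1}$. Using the decomposition $G\cong N\times H$ as $K$-varieties coming from $G=N\rtimes H$, together with Remark~\ref{rem: uniqueness of diff}, one computes that $S_1^{N_{F_1}}\cong F_1[H]$ as a differential $H_{F_1}$-torsor carrying the standard differential structure (derivation on $F_1$, constant on $K[H]$). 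The same decomposition yields $F_0[G]^{N_{F_0}}\cong F_0[H]$ with the standard differential structure. For $S_2^{N_{F_2}}$, the splitting $H\hookrightarrow G$ of the surjection $G\twoheadrightarrow G/N\cong H$ implies that the natural $H_{F_2}$-equivariant composition $Y_2\hookrightarrow X_2\twoheadrightarrow X_2/N_{F_2}$ is a morphism of $H_{F_2}$-torsors, hence an isomorphism; and since both constituents are differential (by Remark~\ref{rem: uniqueness of diff} and Proposition~\ref{prop X/N diff}), this gives $R_2\cong S_2^{N_{F_2}}$ as differential $H_{F_2}$-torsors.

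Since $N_F$-invariants commute with inverse limits, $S^{N_F}$ is the unique solution of the $H_F$-torsor patching problem with pieces $F_1[H]\cong S_1^{N_{F_1}}$ and $R_2\cong S_2^{N_{F_2}}$, glued inside $F_0[H]\cong F_0[G]^{N_{F_0}}$ via the restrictions of $\Theta_1,\Theta_2$ from Lemma~\ref{lem theta} to $N$-invariants. This same patching problem is solved by $R$ itself: the inclusion $R\subseteq F_1$ gives a differential isomorphism $F_1\otimes_F R\cong F_1[H]$ (via the canonical iso $R\otimes_F R\cong R\otimes_K K[H]$ base-changed along $R\hookrightarrow F_1$); $F_2\otimes_F R=R_2$ by construction; and by Lemma~\ref{lem ebp1} the two induced base-changes to $F_0$ agree with the canonical iso $F_0\otimes_F R\cong F_0[H]$ coming from $R\subseteq F_0$. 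Uniqueness in Theorem~\ref{thm diff solution} then delivers the required isomorphism $S^{N_F}\cong R$ of differential $H_F$-torsors. The main technical obstacle is this final comparison: after transporting the identifications $S_i^{N_{F_i}}\cong F_i\otimes_F R$ through the isomorphisms above, one must verify that the resulting gluings in $F_0[H]$ coincide with the base-change gluings coming from $R\subseteq F_0$, which is a bookkeeping check using the explicit formula of Lemma~\ref{lem theta}.
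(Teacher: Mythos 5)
Your proposal is correct and follows essentially the same route as the paper: both identify $S_1^{N}\cong F_1[H]$, $S_2^{N}\cong\Ind_H^H(R_2)\cong R_2$ and $F_0[G]^{N}\cong F_0[H]$ (the paper via Corollary~\ref{ind cor}, you via the quotient-torsor geometry and the splitting), and then invoke uniqueness of solutions in Theorem~\ref{thm diff solution} after matching the $N$-invariant patching problem with the one trivially solved by $R$. The ``bookkeeping check'' you defer at the end is exactly the commutative diagram displayed in the paper's proof, assembled from the maps $\til\Theta_i\circ(\id\otimes\rho_H)$ of Lemma~\ref{lem theta}, and it goes through as you expect.
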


\begin{proof}
By taking $N$-invariants, the given patching problem gives rise to a patching problem $(S_1^N,S_2^N,F_0[H], \bar\Theta_1, \bar\Theta_2)$ of differential $H$-torsors over $(F,F_1,F_2,F_0)$, and the given solution gives rise to a solution $(S^N,\bar\Phi_1,\bar\Phi_2)$ to that patching problem.
We also have a patching problem of differential $H$-torsors given by 
$(F_1 \otimes_F R,F_2 \otimes_F R,F_0 \otimes_F R, \Omega_1, \Omega_2)$, where 
$\Omega_i$ is the natural isomorphism $F_0 \otimes_{F_i} (F_i \otimes_F R) \to F_0 \otimes_F R$.
A solution to this latter patching problem is $(R,\Psi_1,\Psi_2)$, where $\Psi_i$ is the identity on $F_i \otimes_F R$, for $i=1,2$. 
But solutions to patching problems are unique up to isomorphism (given by inverse limit, and the restriction of the derivations; see Theorem~\ref{thm diff solution}).  So it suffices to show that 
the above two patching problems of differential $H$-torsors are isomorphic.
That is, for $j=0,1,2$ we want to find differential isomorphisms $\Lambda_j$ between the respective rings in the two $H$-patching problems that carry $\Omega_i$ to $\bar\Theta_i$ for $i=1,2$.

By Corollary~\ref{ind cor}\ref{ind N mod N}, $S_1^N = (\Ind_N^G(R_1))^N = F_1[H]$, where we identify
$F_1[H]$ with $F_1 \otimes_{F_1} F_1[H] \subseteq R_1 \otimes_{F_1} F_1[G]$.  The differential isomorphism $\bar\Theta_1$ is given by $F_0 \otimes_{F_1} S_1^N = F_0 \otimes_{F_1} F_1[H] \to F_0[H]$.
Meanwhile, since $R_2 = F_2 \otimes_F R$, we have
$S_2 = F_2\otimes_F  \Ind_H^G(R) = 
(R_2 \otimes_F F[G])^{H_{F_2}}= \Ind_H^G(R_2)$, and so
$S_2^N = (\Ind_H^G(R_2))^N = \Ind_H^H(R_2)$ by Corollary~\ref{ind cor}\ref{ind H mod N}.  
The differential isomorphism $\bar\Theta_2$ is given by
$F_0 \otimes_{F_2} S_2^N = F_0 \otimes_{F_2} \Ind_H^H(R_2) = F_0 \otimes_F \Ind_H^H R  \,\stackrel{\til\Theta_0}{\to}\, F_0[H]$, where $\til\Theta_0$ is the differential isomorphism given by Lemma \ref{lem theta} using that $R \subseteq F_0$.  Also, since $R \subseteq F_1$, Lemma \ref{lem theta} yields a differential isomorphism $\til\Theta_1:F_1 \otimes_F \Ind_H^H(R) \to F_1[H] = S_1^N$; and this map is the restriction of $\til\Theta_0$. 
Let $\til\Theta_2$ be the identity map 
$F_2 \otimes_F \Ind_H^H(R) \to \Ind_H^H R_2 = S_2^N$.
We then have a commutative diagram
\[
\xymatrix  { 
F_0 \otimes_{F_1} S_1^N \ar[r]^{\bar\Theta_1} & F_0[H] & F_0 \otimes_{F_2} S_2^N 
\ar[l]_{\bar\Theta_2} \\
F_0 \otimes_{F_1}(F_1 \otimes_F \Ind_H^H(R)) \ar[r]^-{\sim} \ar[u]_{\id_{F_0} \otimes \til\Theta_1}
& F_0 \otimes_F \Ind_H^H(R) \ar[u]_{\til\Theta_0} & 
F_0 \otimes_{F_2} (F_2 \otimes_F \Ind_H^H(R)) \ar[l]_-{\sim} \ar[u]_{\id_{F_0} \otimes \til\Theta_2} \\
F_0\otimes_{F_1}(F_1 \otimes_F R) \ar[u]_{\id_{F_0} \otimes \id_{F_1} \otimes \rho_H} \ar[r]^-{\Omega_1} & F_0 \otimes_F R 
 \ar[u]_{\id_{F_0} \otimes \rho_H} 
& F_0\otimes_{F_2}(F_2 \otimes_F R) \ar[u]_{\id_{F_0} \otimes \id_{F_2} \otimes \rho_H} \ar[l]_-{\Omega_2} 
}
\] 
of differential isomorphisms, where $\rho_H:R \to \Ind_H^H(R)$
is the co-action map associated to $H$ (see Remark~\ref{rem ind GG}).
Setting $\Lambda_i = \til\Theta_i \circ (\id \otimes \rho)$ for $i=0,1,2$
then yields the assertion.
\end{proof}

\begin{thm}\label{thm ebp}
In the situation of Hypothesis \ref{hypothesis}, let $(S,\Phi_1,\Phi_2)$ be a solution of the patching problem $(S_1,S_2,F_0[G], \Theta_1, \Theta_2)$ of differential $G$-torsors. Then $S/F$ is a Picard-Vessiot ring with differential Galois group $G$, and hence is a proper solution to the split differential embedding problem given by $G=N \rtimes H$ and the Picard-Vessiot ring $R/F$.  
\end{thm}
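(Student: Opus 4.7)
The plan is to apply Theorem~\ref{thm patching PVR} directly to conclude that $S/F$ is a Picard-Vessiot ring with differential Galois group $G$, and then combine this with Proposition~\ref{prop S^N} and Lemma~\ref{ebp-solution-criterion} to promote $S$ to a proper solution of the embedding problem.

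To apply Theorem~\ref{thm patching PVR} in the diamond setting ($\mathcal{I}_v=\{1,2\}$, $\mathcal{I}_e=\{0\}$), I take the closed $K$-subgroups $H_1:=N$ and $H_2:=H$ of $G$. Since $G=N\rtimes H$, these subgroups generate $G$, and by Hypothesis~\ref{hypothesis} we have $S_1=\operatorname{Ind}_{H_1}^G(R_1)$ with $R_1/F_1$ a Picard-Vessiot ring with differential Galois group $N_{C_{F_1}}$. For the second vertex I need a Picard-Vessiot ring over $F_2$ with differential Galois group $H_{C_{F_2}}$ whose induction is $S_2$. Since $R/F$ is a Picard-Vessiot ring with group $H$ and $R\subseteq F_1$, Lemma~\ref{lem ebp1} (using the standard patching assumption $C_{F_0}=C_F$ implicit in Hypothesis~\ref{hypothesis}) asserts that $F_2R\subseteq F_0$ is a Picard-Vessiot ring over $F_2$ with differential Galois group $H$, and that the natural map $F_2\otimes_F R\to F_2R$ is an isomorphism. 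Thus $R_2=F_2\otimes_F R\cong F_2R\subseteq F_0$ is a Picard-Vessiot ring over $F_2$ with the required differential Galois group, and $S_2=\operatorname{Ind}_H^G(R_2)$ by definition.

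The remaining hypotheses of Theorem~\ref{thm patching PVR} are immediate from Hypothesis~\ref{hypothesis}: the edge ring is $S_0=F_0[G]$, the containments $R_1\subseteq F_0$ and $R_2=F_2R\subseteq F_0$ hold, and the gluing isomorphisms $\Theta_i\colon F_0\otimes_{F_i}S_i\to F_0[G]$ are precisely those of Lemma~\ref{lem theta} with respect to these embeddings. Theorem~\ref{thm patching PVR} therefore yields that $S/F$ is a Picard-Vessiot ring with differential Galois group $G$.

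Finally, Proposition~\ref{prop S^N} provides an isomorphism of differential $H_F$-torsors $\operatorname{Spec}(R)\cong\operatorname{Spec}(S^{N_F})$. By Lemma~\ref{ebp-solution-criterion} this produces an embedding of differential rings $R\subseteq S$ realizing a proper solution of the split differential embedding problem determined by $G=N\rtimes H$ and the Picard-Vessiot ring $R/F$. All the substantive work sits inside the earlier results; the main point of this theorem is that those results interlock to solve the split embedding problem. The only non-automatic hypothesis to verify is the Picard-Vessiot property of $R_2/F_2$, which is handled by Lemma~\ref{lem ebp1}.
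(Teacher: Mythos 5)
Your argument has a genuine gap: you invoke Lemma~\ref{lem ebp1} ``using the standard patching assumption $C_{F_0}=C_F$ implicit in Hypothesis~\ref{hypothesis}'', but that assumption is \emph{not} part of Hypothesis~\ref{hypothesis}. The hypothesis is deliberately stated so as to allow the constants of the overfields to grow (note that it speaks of the differential Galois group $N_{C_{F_1}}$, base-changed to $C_{F_1}$, precisely because $C_{F_1}$ may be strictly larger than $K$); this generality matters for the factorization inverse systems over complete discretely valued fields mentioned in Example~\ref{ex diamond}\ref{ex diamond other} and in the companion work on Laurent series fields. Without $C_{F_0}=C_F$, Lemma~\ref{lem ebp1} does not apply, so you cannot conclude that $R_2=F_2\otimes_F R$ is a Picard-Vessiot ring over $F_2$ with group $H$, and hence Theorem~\ref{thm patching PVR} is not available. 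The paper records exactly your reduction as Remark~\ref{rem patching thm implication}: in the special case $C_{F_0}=C_F$ (which does hold in the complex-function-field application of Section~\ref{sec: Complex embedding problems}), Theorem~\ref{thm ebp} follows from Theorem~\ref{thm patching PVR} via Lemma~\ref{lem ebp1}. As a proof of the theorem as stated, however, your argument only covers that special case.

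The paper's own proof avoids the issue by verifying the two conditions of Corollary~\ref{cor PV equiv}\ref{PV equiv a} directly. For $C_S=K$, it uses the explicit form of $\Theta_1$ from Lemma~\ref{lem theta} to show $C_{\Theta_1(S_1)}\subseteq F_0[G]^{N_{F_0}}$, deduces $C_S\subseteq S^{N_F}$, and then concludes $C_S=C_{S^{N_F}}=C_R=K$ via the isomorphism $S^{N_F}\cong R$ of Proposition~\ref{prop S^N} --- note that this route never needs $C_{F_1}$ or $C_{F_0}$ to equal $K$. For differential simplicity, it takes a maximal differential ideal $I$, shows $I$ is $N$-stable using Lemma~\ref{lem patching} (with $H_1=N$) together with Lemma~\ref{lem left right}\ref{lem same stability} and faithful flatness, observes $I\cap S^{N_F}=(0)$ since $S^{N_F}\cong R$ is simple, and concludes $I=(0)$ by Lemma~\ref{lem intersection invariants}. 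Your final step (Proposition~\ref{prop S^N} plus Lemma~\ref{ebp-solution-criterion} to pass from the Picard-Vessiot property to a proper solution of the embedding problem) does agree with the paper. To repair your proof you would either need to add the hypothesis $C_{F_0}=C_F$ (weakening the theorem) or replace the appeal to Lemma~\ref{lem ebp1} and Theorem~\ref{thm patching PVR} with a direct verification along the lines above.
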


\begin{proof}
Once the first assertion is shown, the second assertion follows from Proposition~\ref{prop S^N} together with Lemma~\ref{ebp-solution-criterion}.  By Corollary~\ref{cor PV equiv}\ref{PV equiv a}, to prove the first assertion it suffices to show that $S$ has no new constants and is differentially simple.

We first prove that $C_S=K$. Recall that $S_1=\Ind_{N_{F_1}}^{G_{F_1}}(R_1) = (R_1\otimes_{F_1} F_1[G])^{N_{F_1}}=(R_1\otimes_{C_{F_1}} C_{F_1}[G])^{N_{F_1}}$. By the assumptions, $R_1/F_1$ is a Picard-Vessiot ring, hence $C_{R_1}=C_{F_1}$ and thus the ring of constants of $R_1\otimes_{C_{F_1}} C_{F_1}[G]$ equals $C_{F_1}\otimes_{C_{F_1}} C_{F_1}[G]$. So $C_{S_1}=(C_{F_1}\otimes_{C_{F_1}} C_{F_1}[G])^{N_{C_{F_1}}}$, and the explicit description of $\Theta_1$ in Lemma \ref{lem theta} implies $\Theta_1(C_{S_1})=C_{F_1}[G]^{N_{C_{F_1}}}$. Hence $C_{\Theta_1(S_1)}=\Theta_1(C_{S_1})=C_{F_1}[G]^{N_{C_{F_1}}}\subseteq F_0[G]^{N_{F_0}}$. Here the invariants are taken with respect to the action given in Lemma~\ref{lem left right}(\ref{other action}).  But by Lemma~\ref{lem left right}\ref{lem same stability}, since $N$ is normal these are the same as the $N$-invariants under the action given in Lemma~\ref{lem left right}(\ref{torsor action}) (which agrees with the torsor action on $S$).  We thus have 
$C_S \subseteq S \cap C_{\Theta_1(S_1)} \subseteq  S\cap F_0[G]^{N_{F_0}}=S^{N_{F}}$ and hence $C_S=C_{S^{N_{F}}}$. By Proposition \ref{prop S^N}, there is a differential isomorphism $S^{N_{F}}\cong R$, hence $C_{S^{N_{F}}}=C_R$. By the assumptions, $R/F$ is a Picard-Vessiot ring, so $C_R=K$ and we conclude $C_S=K$.

\smallskip

Next, we show that $S$ is a simple differential ring.  Let $I$ be a maximal differential ideal of $S$.  By Lemma~\ref{lem patching} (with $H_1=N$), the ideal $I_0 := F_0 \otimes_F I \subseteq F_0 \otimes_F S$ is $N$-stable with respect to the action considered there; or equivalently with respect to the torsor action, by Lemma~\ref{lem left right}\ref{lem same stability}.  Since $F_0 \otimes_F S$ is faithfully flat over $S$, the ideal $I$ is the contraction of its extension $I_0$ to $ F_0 \otimes_F S$, and it is therefore $N$-stable.    
Moreover, $I \cap S^{N_F}$ is a proper differential ideal in $S^{N_F}$ and so $I\cap S^N=(0)$, since $S^N\cong R$ is differentially simple. Therefore, Lemma~\ref{lem intersection invariants} implies $I=(0)$ and thus $S$ is differentially simple. 
\end{proof}

\begin{rem} \label{rem patching thm implication}
In the situation that $C_{F_0}=C_F$, Theorem~\ref{thm ebp} follows from Theorem~\ref{thm patching PVR} (in the case of a differential diamond) via 
Lemma~\ref{lem ebp1}, since that lemma implies that $R_2/F_2$ is a Picard-Vessiot ring with differential Galois group $H$.
\end{rem}

We conclude this section by summarizing the content of Proposition \ref{prop S^N} and Theorem \ref{thm ebp} in the following

\begin{thm}\label{thm ebp new}
Let $(F,F_1,F_2,F_0)$ be a differential diamond with the factorization property and let $(N\rtimes H, R)$ be a split differential embedding problem over $F$ with the property that $R\subseteq F_1$. Assume further that there exists a Picard-Vessiot ring $R_1/F_1$ with differential Galois group $N_{C_{F_1}}$ and with $R_1\subseteq F_0$. Then there exists a proper solution to the differential embedding problem $(N\rtimes H, R)$ over $F$.	
\end{thm}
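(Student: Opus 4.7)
The plan is to recognize that the hypotheses of Theorem~\ref{thm ebp new} are precisely those of Hypothesis~\ref{hypothesis}, so that Proposition~\ref{prop S^N} and Theorem~\ref{thm ebp} can be applied in combination. First I would set $G = N \rtimes H$ and form the induced $G_{F_i}$-torsors $S_1 = \Ind_N^G(R_1)$ and $S_2 = \Ind_H^G(R_2)$, where $R_2 := F_2 \otimes_F R$. Lemma~\ref{lem ebp1} is not used here (indeed, it may fail when $C_{F_0}\neq C_F$, which is precisely why this formulation is needed), but we do use that $R \subseteq F_1 \subseteq F_0$ and $R_1 \subseteq F_0$ to get, from Lemma~\ref{lem theta}, differential isomorphisms $\Theta_1 \colon F_0 \otimes_{F_1} S_1 \to F_0[G]$ and $\Theta_2 \colon F_0 \otimes_{F_2} S_2 \to F_0[G]$ of differential $G_{F_0}$-torsors.

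Next I would assemble these data into a patching problem of differential $G_F$-torsors over the diamond $(F,F_1,F_2,F_0)$. Since this diamond has the factorization property, Theorem~\ref{thm diff solution} yields a unique (up to differential isomorphism) solution $(S, \Phi_1, \Phi_2)$, where $S$ is realized as the inverse limit of the $S_i$ with the restricted derivation and $G$-action.

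Now I would invoke Theorem~\ref{thm ebp}, which applies in exactly this setup: it gives that $S/F$ is a Picard-Vessiot ring with differential Galois group $G = N \rtimes H$. To verify that this $S$ actually solves the given embedding problem, I would apply Proposition~\ref{prop S^N}, which under the same Hypothesis~\ref{hypothesis} yields a differential $H_F$-torsor isomorphism $S^{N_F} \cong R$.

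Finally, the criterion of Lemma~\ref{ebp-solution-criterion} converts the differential $H_F$-torsor isomorphism $\Spec(R) \cong \Spec(S^{N_F})$ into an embedding $R \subseteq S$ of differential rings compatible with the quotient $G \twoheadrightarrow H$; this is exactly the data of a proper solution of the embedding problem $(N\rtimes H, R)$. Since the main technical obstacles (namely the simplicity of $S$ and the absence of new constants, proved via the $N$-stability analysis in Lemma~\ref{lem patching}, and the identification $S^{N_F} \cong R$ via patching the induced $H$-torsors) are already handled in Theorem~\ref{thm ebp} and Proposition~\ref{prop S^N}, the present theorem is essentially a bookkeeping consequence; no further arguments are needed beyond verifying the hypotheses and invoking these three results in sequence.
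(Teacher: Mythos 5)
Your proposal is correct and follows essentially the same route as the paper: both verify that the given data satisfy Hypothesis~\ref{hypothesis}, form the patching problem $(S_1,S_2,F_0[G],\Theta_1,\Theta_2)$, solve it via Theorem~\ref{thm diff solution}, and then invoke Theorem~\ref{thm ebp} (whose proof already incorporates Proposition~\ref{prop S^N} and Lemma~\ref{ebp-solution-criterion}) to conclude. Your explicit unpacking of the final step into Proposition~\ref{prop S^N} plus Lemma~\ref{ebp-solution-criterion} is just a restatement of how Theorem~\ref{thm ebp} itself is proved, so there is no substantive difference.
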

\begin{proof} Let $S_1 = \Ind_{N_{F_1}}^{G_{F_1}}(R_1)$,
	let $R_2=F_2 \otimes_F R$, let $S_2 = \Ind_{H_{F_2}}^{G_{F_2}}(R_2) = 
	F_2 \otimes_F \Ind_{H_F}^{G_F}(R)$, and let $\Theta_i: F_0 \otimes_{F_i} S_i \to F_0[G]$ be the isomorphisms as explained in Hypothesis \ref{hypothesis}. Then $(S_1,S_2,F_0[G],\Theta_1,\Theta_2)$ defines a patching problem of 
	differential $G_F$-torsors; and this has a solution $(Z,\Phi_1,\Phi_2)$ 
	that is unique up to isomorphism, by Theorem \ref{thm diff solution}.  Write 
	$Z=\Spec(S)$.  Then Theorem \ref{thm ebp} asserts that $S$ is a 
	Picard-Vessiot ring over $F$ with differential Galois group $G$, and it 
	is a proper solution to the given split differential embedding problem.
\end{proof}

\section{Differential embedding problems over complex function fields}\label{sec: Complex embedding problems}

Results about differential embedding problems, as considered in Section~\ref{subsec EP}, were obtained in \cite{Oberlies}
for the case $F=C(x)$ with $\del=d/dx$ and $C$ algebraically closed, by building on results of Kovacic (\cite{Kovacic}, \cite{Kovacic1}).
In \cite{Oberlies}, it was shown that there are proper solutions to certain types of differential embedding problems, including all those whose kernel is connected; but the general case has remained open.  

As an application of Theorem~\ref{thm ebp new}, we show in this section that the assumption that $H$ is connected can be dropped for $F=\C(x)$, and that more generally every differential embedding problem can be solved over the field of functions $F$ of any compact Riemann surface (i.e., complex curve).  Here we can take any non-trivial $\C$-linear derivation on $F$, or equivalently any derivation on $F$ for which the constants are $\C$.

The next result, based on an idea that is often called the ``Kovacic trick'', is a more precise version of an assertion in \cite{BHH}.

\begin{prop}\label{Kovacic trick}
Let $F$ be a differential field of characteristic zero and write $K=C_F$. Let $L/F$ be a differential field extension that is finitely generated over $F$ with $C_L=K$.  Let $L'$ be the algebraic closure of $F$ in $L$ and let $L''$ be the normal closure of $L'$ in $\bar{F}$. Set $d=[L'':F]$ and $m=\trd(L/F)+1$.  Let $G$ be a linear algebraic group defined over $K$ and let $R/F$ be a Picard-Vessiot ring with differential Galois group $G^{2m+2d}$.  Then there is a subring $R_0$ of $R$ such that $R_0/F$ is a Picard-Vessiot ring with differential Galois group $G$, and such that $R_0 \otimes_F L$ is a Picard-Vessiot ring over $L$ with differential Galois group $G$. 
\end{prop}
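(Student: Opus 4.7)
The plan is to prove this via the classical Kovacic trick: the hypothesis $\underline{\Aut}^\del(R/F)=\tilde G:=G^{2m+2d}$ produces an abundance of Picard-Vessiot sub-rings of $R$ with differential Galois group $G$, and a dimension/counting argument identifies one whose base change to $L$ remains Picard-Vessiot with group $G$.

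For the setup, for any surjective homomorphism $\pi\colon \tilde G \twoheadrightarrow G$, Proposition~\ref{prop X/N diff} together with Corollary~\ref{cor PV equiv}\ref{PV equiv a} shows that the invariant subring $R^{\ker \pi}\subseteq R$ is a Picard-Vessiot ring over $F$ with differential Galois group $G$. Setting $N=2m+2d$, the $N$ coordinate projections $\pi_i\colon \tilde G \to G$ thereby furnish candidate sub-rings $R_i := R^{\ker \pi_i}$; the goal will be to locate an index $i$ for which the base change $R_i\otimes_F L$ is itself Picard-Vessiot over $L$ with group $G$.

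To analyze the base change, I would pick any maximal differential ideal $I\subseteq R\otimes_F L$ and invoke $C_L=K=C_F$ together with the linear disjointness of constants from the base differential field (\cite[Chapter II, Section 1]{Kolchin:DifferentialAlgebra}) to see that the quotient $R':=(R\otimes_F L)/I$ is a Picard-Vessiot ring over $L$, with differential Galois group a closed subgroup $H\leq \tilde G$ (via the common fundamental solution matrix). Since $R$ is differentially simple, $R\hookrightarrow R'$ is injective, which gives
\[
\dim H^\circ \;=\; \trd(R'/L) \;\geq\; \trd(R/F) - \trd(L/F) \;=\; N\dim G^\circ - (m-1).
\]
If every $\pi_i(H^\circ)$ were a proper subgroup of $G^\circ$ (and hence of dimension at most $\dim G^\circ - 1$), then $H^\circ \subseteq \prod_i \pi_i(H^\circ)$ would force $\dim H^\circ \leq N\dim G^\circ - N$, contradicting the lower bound as soon as $N>m-1$. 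Hence $\pi_i(H^\circ)=G^\circ$ for some $i$. A parallel count at the level of finite component groups, using that $L''/F$ has degree $d$ to bound how much $H/H^\circ$ can shrink inside $(G/G^\circ)^N$, shows that some projection surjects $H/H^\circ$ onto $G/G^\circ$. The generous bound $N=2m+2d$ is engineered so that one can pick a single index $i$ meeting both surjectivity conditions at once, and with the further property that the natural map $R_i\otimes_F L \to R'$ is injective; this injectivity upgrades ``$R_i\otimes_F L$ admits a PV quotient with group $G$'' to ``$R_i\otimes_F L$ itself is PV with group $G$''. Taking $R_0:=R_i$ for such $i$ then completes the proof.

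The principal obstacle is the combined pigeonhole in the previous paragraph: one must arrange the dimension bound (controlled by $m$), the component-group bound (controlled by $d$), and the injectivity of $R_i\otimes_F L \to R'$ all for the same index $i$. The factor $2$ in $2m+2d$ is what affords enough room for such a simultaneous choice.
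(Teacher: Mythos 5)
Your strategy---realizing $G$-quotients of $R$ via the coordinate projections $G^{2m+2d}\to G$ and selecting a good coordinate by a pigeonhole count---is in the spirit of the third step of the proof that the paper actually invokes (the paper's own proof is a citation to, and summary of, the proof of Theorem~4.12 of \cite{BHH}), and your dimension count for the identity components is correct. But there is a genuine gap at the first step of your analysis of the base change: the claim that $R':=(R\otimes_F L)/I$ is a Picard-Vessiot ring over $L$ does not follow from $C_L=C_F=K$ together with linear disjointness of constants. The quotient by a maximal differential ideal is differentially simple, but its ring of constants is in general only an \emph{algebraic} extension of $K$, and it can be strictly larger when $K$ is not algebraically closed---which the proposition permits, and which is the relevant case in \cite{BHH}, where $K=k((t))$. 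For instance, take $F=\R(x)$ with $\del=d/dx$, let $R=F[y]$ with $y^2=(1+x^2)^{-1}$ (a Picard-Vessiot ring for $y'=\tfrac{-x}{1+x^2}y$ with group $\mu_2$), and let $L=F(z)$ with $z^2=-(1+x^2)$; then $C_L=\R$, but $R\otimes_F L$ is a field in which $yz$ is a constant with $(yz)^2=-1$, so $C_{R'}=\C\supsetneq K$. Once $C_{R'}\supsetneq K$, every identification you rely on downstream---$\dim H=\trd(R'/L)$, the equality of the differential Galois group of $R_iL/L$ with $\pi_i(H)$, and the torsor argument giving $R_i\otimes_F L\cong R_iL$---breaks down, because $\Spec(R')$ is no longer an $H_L$-torsor in the required sense.

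This failure mode is precisely what the $2d$ in the exponent is designed to control: in the cited proof, the first two steps use those $2d$ factors to cut down to a sub-Picard-Vessiot ring $R'\subseteq R$ with group $G^{2m}$ whose algebraic part $F'$ satisfies that $F'\otimes_F L'$ is a field with $K$ algebraically closed in it; only then is $\Frac(R')\otimes_F L$ an integral domain with no new constants in its fraction field, and only then does a count like yours (the third step, using the remaining $2m$ factors) go through. Your reassignment of the $2d$ to a component-group pigeonhole addresses a related but different issue (the image $\pi_i(H)$ being too small) and does not by itself exclude new constants; relatedly, the injectivity of $R_i\otimes_F L\to R'$ is not an extra condition to arrange but is automatic once $\pi_i(H)=G$ \emph{and} $R'$ is genuinely Picard-Vessiot over $L$, since a surjection of $G_L$-torsors is an isomorphism. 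In the special case $K=\C$ used in Section~\ref{sec: Complex embedding problems} your argument is essentially salvageable, since $C_{R'}$ is then algebraic over $K$ and hence equal to $K$; but as a proof of the proposition as stated it is incomplete.
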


\begin{proof}
This assertion was shown in the proof of Theorem~4.12 of \cite{BHH}, though the statement of that result asserted a bit less.  Namely, in the first step of that proof of that theorem, it was shown that with notation as above, $R$ contains a subring $R'$ which is a Picard-Vessiot ring having differential Galois group $G^{2m}$, with the additional properties that $F' \otimes_F L'$ is a field, where $F'$ is the algebraic closure of $F$ in $E:=\Frac(R')$; and that $K$ is algebraically closed in $F' \otimes_F L'$.  In the second step of that proof, it was shown that $E \otimes_F L$ is an integral domain and that $K$ is algebraically closed in the fraction field $\til E$ of that domain.  In the third step, it was shown that $R'$ contains a Picard-Vessiot ring $R_0/F$ (called $R_i$ there) with differential Galois group $G$, such that the fraction field $\til E_0$ of $\til R_0 := R_0 \otimes_F L \subseteq \til E$ is the compositum $E_0L \subseteq \til E$, and such that $C_{\til E_0} = K$.  Finally, in the conclusion of the proof, it was observed that $\til R_0$ is a Picard-Vessiot ring over $L$ with differential Galois group $G$.
\end{proof}

We also recall the following from \cite{BHH} (see the statement and proof of Lemma~4.2 there):

\begin{lem} \label{change of derivation}
Let $(F,\del)$ be a differential field, let $a \in F^\times$,  let $\del'=a\del$, and let $K$ be the constants of $(F,\del)$ (or equivalently, of $(F,\del')$).  Let $G$ be a linear algebraic group over $K$, 
let $A \in F^{n \times n}$, and let $R$ be a Picard-Vessiot ring over $(F,\del)$ for the differential equation $\del y=Ay$ with differential Galois group $G$.  Then $R$ is a Picard-Vessiot ring over $(F,\del')$ for the differential equation $\del' y=aAy$ with differential Galois group $G$.
\end{lem}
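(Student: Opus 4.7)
The plan is to transport all structure along the equality $\del' = a\del$ and check that everything in the definition of a Picard-Vessiot ring is preserved. Concretely, let $\del_R \colon R \to R$ denote the derivation on $R$ that extends $\del$ on $F$ and makes $R$ a Picard-Vessiot ring for $\del y = Ay$, and define $\del'_R := a\,\del_R$. Since multiplication by an element of $R$ takes derivations to derivations, $\del'_R$ is a derivation on $R$, and by construction it restricts to $\del' = a\del$ on $F$. Writing $R = F[Z,\det(Z)^{-1}]$ with $\del_R(Z) = AZ$, we immediately have $\del'_R(Z) = a\,\del_R(Z) = aAZ$, so $Z$ is a fundamental solution matrix for $\del' y = aAy$ over $(F,\del')$ with respect to $\del'_R$.

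Next I would verify the two remaining conditions in the definition of a Picard-Vessiot ring, using that $a \in F^\times$ and that $R$, being differentially simple, is an integral domain in which $a$ is a unit of $F \subseteq R$. First, an additive subgroup $I \subseteq R$ is stable under $\del_R$ if and only if it is stable under $\del'_R = a\del_R$: the forward direction uses $a\del_R(I) \subseteq aI \subseteq I$ when $I$ is an ideal, and the reverse direction multiplies by $a^{-1}$. Thus the $\del'_R$-ideals of $R$ coincide with the $\del_R$-ideals, so $R$ is simple as a $(F,\del')$-differential ring. Second, since $a$ is a nonzerodivisor in $R$, we have $\del'_R(r) = a\,\del_R(r) = 0$ if and only if $\del_R(r) = 0$, so $C_R$ is the same for either derivation, namely $K$. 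Together with the previous paragraph, this shows $R$ is a Picard-Vessiot ring over $(F,\del')$ for $\del' y = aAy$, by the definition recalled in Subsection~\ref{subsec PV}.

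Finally, for the differential Galois group, I would observe that for an $F$-algebra endomorphism $\sigma$ of an $F$-algebra $A$, the condition $\sigma \circ \del_R = \del_R \circ \sigma$ (on $R \otimes_F A$, extending $\del_R$ by $\id_A$) is equivalent to $\sigma \circ \del'_R = \del'_R \circ \sigma$, because multiplying an identity of $A$-linear maps by the $F$-scalar $a$ (which $\sigma$ fixes) is reversible. Hence the functors $\underline{\Aut}^\del(R/F)$ and $\underline{\Aut}^{\del'}(R/F)$ on $K$-algebras coincide, so the differential Galois group of $R$ over $(F,\del')$ is again $G$. There is no real obstacle here; the only thing to keep in mind is that every step uses nothing beyond $a$ being a unit of $F$ and $R$ being an integral domain, so the argument goes through uniformly.
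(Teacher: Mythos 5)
Your proof is correct and is the standard direct verification: rescale the derivation on $R$ by $a$, and check that differential ideals, constants, fundamental solution matrices, and derivation-commuting automorphisms are all unchanged because $a$ is a unit of $F$ fixed by every $F$-linear map. The paper itself gives no proof of this lemma but defers to \cite[Lemma~4.2]{BHH}, whose argument proceeds in essentially the same way, so there is nothing to add.
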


In order to prove our main result, we consider fields of meromorphic functions on open subsets of Riemann surfaces, especially the Riemann sphere $\P^1_{\C} = \C \cup \{\infty\}$, and we will apply our patching and embedding problem results using these fields.  Consider the open disc $U \subset \C$ of radius $c>0$ about the origin, and let $F_1$ be the field of meromorphic functions on $U$.  Thus $\C(x) \subset F_1 \subset \C((x))$, and the standard derivation $d/dx$ on $\C((x))$ restricts to the complex derivative $d/dx$ on $F_1$ and to the standard derivation $d/dx$ on $\C(x)$.

\begin{lem}\label{lem ebp2} 
Let $c>0$, and let $\del$ be the derivation $d/dx$ on the field $F_1$ of meromorphic functions on the open disc $U = \{ x \in \C \mid |x| < c\}$.  Let $F$ be a differential subfield of $F_1$ with $C_F={\mathbb C}$, let $A \in F^{n \times n}$, and let $R/F$ be a Picard-Vessiot ring for the differential equation $\del(y)=Ay$.  If all the entries of $A$ are holomorphic on $U$, then $R$ embeds as a differential subring of $F_1$.
\end{lem}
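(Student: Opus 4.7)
The plan is to exhibit $R$ concretely as a differential subring of $F_1$ by constructing a fundamental solution matrix for $\del(y)=Ay$ inside $F_1$, and then invoking uniqueness of Picard-Vessiot rings.

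First I would apply the classical Cauchy existence theorem for linear systems on a simply connected domain: since the entries of $A$ are holomorphic on the simply connected open disc $U$, the initial value problem $\del(Y)=AY$, $Y(0)=I$, has a unique holomorphic solution $Y$ on all of $U$ (existence locally by power-series/Picard iteration, then extension to all of $U$ by the monodromy theorem, which yields a single-valued solution because $U$ is simply connected). From Liouville's formula $\del(\det Y)=\operatorname{tr}(A)\cdot\det Y$ together with $\det(Y(0))=1$, the determinant has no zeros on $U$, so $Y\in\GL_n(F_1)$.

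Next, I would consider the differential $F$-subalgebra $S:=F[Y,\det(Y)^{-1}]\subseteq F_1$. It is closed under $\del$ because $\del(Y)=AY$ has entries in $F\cdot S$ and $\del(\det(Y)^{-1})=-\operatorname{tr}(A)\det(Y)^{-1}\in S$. Since $F_1$ is a field, $S$ is an integral domain; and because $C_{F_1}=\mathbb{C}=C_F$, we have $C_{\Frac(S)}=\mathbb{C}$. Thus $S$ satisfies the criterion (given in the paragraph preceding Proposition~\ref{prop PVR}) for being a Picard-Vessiot ring over $F$ for the linear differential equation $\del(y)=Ay$.

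Finally, since $C_F=\mathbb{C}$ is algebraically closed, Picard-Vessiot rings over $F$ for a fixed linear differential equation are unique up to isomorphism of differential $F$-algebras. Hence $R\cong S$ as differential $F$-algebras, and composing any such isomorphism with the inclusion $S\hookrightarrow F_1$ yields the desired embedding. The only nontrivial ingredient in this argument is the analytic step, namely the global existence on $U$ of a fundamental solution matrix with meromorphic (in fact holomorphic) entries; once that is in hand, everything else is formal from the definition of a Picard-Vessiot ring and the uniqueness statement over an algebraically closed field of constants.
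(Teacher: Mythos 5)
Your proposal is correct and follows essentially the same route as the paper's proof: produce a holomorphic fundamental solution matrix on the disc (the paper simply cites Forster, Theorem~11.2, for the analytic step you carry out via Cauchy existence and the monodromy theorem), observe that $F[Z,\det(Z)^{-1}]\subseteq F_1$ is a Picard--Vessiot ring because $C_{F_1}=\C=C_F$, and conclude by uniqueness of Picard--Vessiot rings over an algebraically closed field of constants.
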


\begin{proof}
By \cite[Theorem~11.2]{Forster},
there exists a fundamental solution matrix $Z$ for $A$ with entries in $F_1$ (in fact holomorphic on $U$). Since $C_{F_1}={\mathbb C}$, which equals the field of constants of $F$, the ring 
$F[Z,\det(Z)^{-1}]\subseteq F_1$ is a Picard-Vessiot ring over $F$ for the differential equation $\del(y)=Ay$ 
(notation as in Section~\ref{subsec PV}). The assertion then follows from the uniqueness of Picard-Vessiot rings over algebraically closed fields of constants. 
\end{proof}

\begin{lem} \label{int-fact lem}
Let $F$ be a one-variable function field over $\C$, or equivalently the field of meromorphic functions on a compact Riemann surface $\X$.  
Let $O_1,O_2$ be connected metric open subsets of $\X$ such that $O_i \ne \X$, $O_1 \cup O_2 = \X$, and $O_0 := O_1 \cap O_2$ is connected.  Let $F_i$ be the field of meromorphic functions on 
$O_i$.  Then 
\renewcommand{\theenumi}{(\alph{enumi})}
\renewcommand{\labelenumi}{(\alph{enumi})}
\begin{enumerate}
\item \label{field intersection}
As subfields of $F_0$, $F_1\cap F_2 = F$, the field of meromorphic functions on $\X$.
\item \label{field factorization}
For every $n \ge 1$, every element of $\GL_n(F_0)$ can be written as $A_2^{-1}A_1$ with 
$A_i \in \GL_n(F_i)$.
\end{enumerate}
\end{lem}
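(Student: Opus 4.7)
My plan is to treat the two parts separately.

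For part~(a), the restriction maps $F_i \to F_0$ are injective by the identity principle for meromorphic functions on the connected open set $O_i$ (since $O_0$ is a nonempty open subset of $O_i$), so the intersection $F_1 \cap F_2 \subseteq F_0$ is well-defined. Given $f$ in this intersection, the meromorphic functions on $O_1$ and $O_2$ that represent $f$ must agree as meromorphic functions on $O_0$, and hence they glue via the sheaf property of meromorphic functions into a single meromorphic function on $O_1 \cup O_2 = \mathcal{X}$, i.e.\ an element of $F$. The containment $F \subseteq F_1 \cap F_2$ is immediate.

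For part~(b), I plan two steps. In the first step, I would reduce $A \in \GL_n(F_0)$ to the case where $A$ and $A^{-1}$ are both holomorphic on $O_0$. The ``bad'' locus $P \subseteq O_0$, consisting of points where $A$ or $A^{-1}$ has a pole, is discrete in $O_0$. Using the Smith normal form over each discrete valuation ring $\mathcal{O}_{\mathcal{X},p}$ for $p\in P$ to determine the required local corrections, together with a Mittag--Leffler type argument on the non-compact (hence Stein) connected Riemann surface $O_1$, one constructs a matrix $C \in \GL_n(F_1)$ such that both $AC$ and $(AC)^{-1}$ are holomorphic on $O_0$. (The matrix-valued Mittag--Leffler step reduces, via local row and column operations, to the scalar Weierstrass theorem on non-compact Riemann surfaces, which is a consequence of the vanishing of $H^1$ of coherent sheaves on Stein surfaces.) Since any factorization $AC = A_2^{-1}A_1'$ with $A_2 \in \GL_n(F_2)$ and $A_1' \in \GL_n(F_1)$ yields $A = A_2^{-1}(A_1' C^{-1})$ of the required form, we may replace $A$ by $AC$ and thereby assume that $A$ and $A^{-1}$ are both holomorphic on $O_0$.

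In the second step, I would glue the trivial holomorphic rank-$n$ vector bundles on $O_1$ and $O_2$ via the transition function $A$ to obtain a holomorphic rank-$n$ vector bundle $E$ on $\mathcal{X}$. Since $\mathcal{X}$ is compact, Riemann--Roch applied to $E \otimes \mathcal{O}(mD)$ for a fixed ample divisor $D$ and $m$ sufficiently large produces many global holomorphic sections of $E \otimes \mathcal{O}(mD)$, equivalently global meromorphic sections of $E$ (with poles controlled by $mD$); precisely $n$ such sections can be chosen to be $F$-linearly independent, since the generic fiber of $E$ is $n$-dimensional over $F$. In the trivializations on $O_1$ and $O_2$, these $n$ meromorphic sections are represented by matrices $T_1 \in \GL_n(F_1)$ and $T_2 \in \GL_n(F_2)$ satisfying $T_2 = A T_1$ on $O_0$. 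Setting $A_1 := T_1^{-1}$ and $A_2 := T_2^{-1}$ then gives the desired factorization $A = A_2^{-1} A_1$.

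The main obstacle will be the first step: constructing a single meromorphic matrix $C$ on $O_1$ that simultaneously corrects the local Smith invariants of $A$ at all points of the (possibly infinite) discrete set $P$. This is the genuinely analytic part of the argument and relies essentially on Stein theory for $O_1$; once this is in place, the second step is a direct application of Riemann--Roch on the compact Riemann surface $\mathcal{X}$.
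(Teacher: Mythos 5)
Your part~(a) and the second step of your part~(b) are essentially the paper's argument: the paper also handles the case $A \in \GL_n(\H(O_0))$ by gluing free rank-$n$ modules over $O_1,O_2$ along the transition matrix $A$ into a locally free sheaf on $\X$ (algebraized via GAGA) and extracting the factorization from transition matrices to a basis of generically independent meromorphic sections supplied by Riemann--Roch. The genuine gap is in your first step. The bad locus $P$ is discrete in $O_0$ but need not be closed and discrete in $O_1$: since $O_1\cup O_2=\X$ with both sets open, $\partial O_2$ is a compact subset of $O_1$, and the poles of the entries of $A$ and the zeros of $\det A$ may accumulate along $\partial O_2$, i.e.\ at points \emph{interior} to $O_1$. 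Already for $n=1$, take $\X=\P^1_\C$, $O_1$ and $O_2$ discs about $0$ and $\infty$ overlapping in an annulus $O_0$, and $f$ meromorphic on $O_0$ with zeros accumulating at the boundary circle $\partial O_2\subset O_1$. Any $c\in F_1^\times$ with $fc$ and $(fc)^{-1}$ holomorphic on $O_0$ would have to satisfy $\mathrm{ord}_p(c)=-\mathrm{ord}_p(f)$ at every $p\in O_0$, so the zero/pole set of $c$ would accumulate at an interior point of $O_1$, which is impossible for a nonzero meromorphic function on $O_1$. Thus no one-sided correction $C\in\GL_n(F_1)$ exists in general, and the Weierstrass/Mittag--Leffler machinery on the Stein surface $O_1$ does not apply because the correction data is not supported on a closed discrete subset of $O_1$. (At a minimum the correction would have to be two-sided, $A\mapsto C_2AC_1$ with $C_i\in\GL_n(F_i)$, splitting $P$ according to whether it accumulates at $\partial O_1$ or at $\partial O_2$; that requires additional care with the local Smith-form data.)

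The paper's reduction is different and sidesteps the issue entirely: using the collar neighborhood theorem it shrinks $O_1,O_2$ to connected opens $\til O_1,\til O_2$ still covering $\X$ and chosen so that only finitely many bad points survive in the new overlap; it deletes those finitely many points from $\til O_1$, applies the holomorphic case to the resulting cover $(O_1',O_2')$, and then shows a posteriori that the factors $A_1,A_2$ obtained there have entries meromorphic on all of $O_1$ and $O_2$ by a field-intersection argument (e.g.\ $A_1=A_2A$ is meromorphic on $O_1'\cap O_2$ as well as on $O_1'$, and $F_1'\cap F^+=F_1$). If you replace your first step by a shrinking argument of this kind, the rest of your proof goes through.
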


\begin{proof}
Since being meromorphic is a local property, and since $O_1 \cup O_2=\X$, the first assertion 
follows from the fact that every meromorphic function on $\X$ is a rational function on $\X$.
For the second assertion, let $\H$ be the sheaf of holomorphic functions on $\X$ in the 
complex metric topology.  Then $F_i$ is the fraction field of $R_i := \H(O_i)$, the ring of holomorphic 
functions on $O_i$.  Let $A \in \GL_n(F_0)$.

\medskip

{\em Case 1:} $A \in \GL_n(R_0)$.  

Let $M_i$ be a free $R_i$-module of rank $n$, for $i=1,2$, say with bases $B_1,B_2$. 
Thus $B_i$ is also an $F_i$-basis of the vector space $M_i \otimes_{R_i} F_i$.
Consider the locally free 
$\H$-module $\M$ on $\X$ with $\M(O_i)=M_i$ for $i=1,2$, and with transition matrix $A \in \GL_n(R_0)$ 
on $O_0$ between $B_1,B_2$ (i.e., $B_1=B_2A$).  Since $\H$ is coherent, so is $\M$, being locally free of finite rank.  By 
\cite[D\'efinition~2, Proposition~10, Th\'eor\`eme~3]{GAGA}, there is an equivalence of categories 
$\F \mapsto \F^{\mathrm h}$ from the  
coherent $\O$-modules on $\X$ to coherent $\H$-modules on $\X$, 
satisfying $\F^{\mathrm h}(U) = \H(U) \otimes_{\O(U)} \F(U)$ for every Zariski open subset
$U \subseteq \X$.  (Here $\O$ is the sheaf of regular functions on $\X$ in the Zariski topology.)  Thus $\M = \F^{\mathrm h}$ for some coherent sheaf $\F$ of $\O$-modules on $\X$.  Moreover, since $\M$ is 
locally free, so is $\F$ (see \cite{GAGA}, bottom of page~31).     

For $i=1,2$ choose a point $P_i \in \X \smallsetminus O_i$, and let $U_i = \X \smallsetminus \{P_i\}$.  
Also let $U_0 = U_1 \cap U_2 = \X \smallsetminus \{P_1,P_2\}$.  Since $\F$ is locally free, 
there is a non-empty Zariski open subset $U \subset U_0$ such that $\F(U)$ is free of rank $n$ over $\O(U)$, say with basis $B$.  Each element of $B$ has only finitely many poles on $U_i$, viewing
$F \otimes_{\O(U)} \F(U) = F \otimes_{\O(U_i)} \F(U_i)$.
By Riemann-Roch, for $i=1,2$ there exists a non-zero element 
$f_i \in \O(U_i) \subset F$ such that the elements of
$f_iB$ lie in $\F(U_i)$.  Here $f_iB$ is an $F$-basis of $F \otimes_{\O(U_i)} \F(U_i)$.
Since $O_i \subset U_i$, the set $f_iB$ is also an $F_i$-basis of $F_i \otimes_{\H(O_i)} \M(O_i)$.
Let $C_i \in \GL_n(F_i)$ be the transition matrix between the bases $B_i$ and $f_iB$ of $F_i \otimes_{\H(O_i)} \M(O_i)$; i.e., $B_i=(f_iB)C_i$.  Thus 
$B_1 = f_1BC_1 = f_1(f_2^{-1}B_2C_2^{-1})C_1 \in \GL_n(F_0)$.  Taking $A_1 = f_1f_2^{-1}C_1 \in \GL_n(F_1)$ and $A_2=C_2 \in \GL_n(F_2)$ 
yields $A_2^{-1}A_1 = B_2^{-1}B_1=A$, 
completing the proof of Case~1.

\medskip

{\em Case 2:} General case.

The entries of $A$ are meromorphic functions on $O_0$, as is $\det(A)$.  So the set $\Sigma \subset O_0$ consisting of the poles of the entries of $A$ and the zeroes of $\det(A)$ is a discrete subset of $O_0$.  
Thus the limit points of $\Sigma$ in $\X$ lie in $\partial O_1 \cup \partial O_2$, where
$\partial O_i = \bar O_i \smallsetminus O_i$ is the boundary of $O_i$.  
Since $O_i$ is open and $O_1 \cup O_2 = \X$, it follows that
$\partial O_1$ and $\partial O_2$ are disjoint closed subsets of $\X$, with 
$\partial O_1 \subset O_2$ and $\partial O_2 \subset O_1$.
Using the collar neighborhood theorem, we see that there are 
metric open neighborhoods $N_i \subset \X$ of $\partial O_i$
(for $i=1,2$) such that the following properties hold: 
\begin{itemize}
\item $\bar N_1$ is disjoint from $\bar N_2$; 
\item the open sets  
$\til O_i := O_i \smallsetminus (O_i \cap \bar N_i)$ are connected for $i=1,2$; 
\item the three intersections 
$\til O_0 := \til O_1 \cap \til O_2$, $\til O_1 \cap O_2$, and $O_1 \cap \til O_2$ are each connected; 
\item $\til O_1 \cup \til O_2 = \X$; and 
\item $\til \Sigma := \Sigma \cap \til O_0$ is finite.
\end{itemize}  

Let $O_i' := \til O_i \smallsetminus \til \Sigma$ for $i=0,1$, and let $O_2' := \til O_2$.  Then $O_i'$ is a connected open set for $i=0,1,2$; $O_1' \cap O_2' = O_0'$; 
$O_1' \cup O_2' = \X$; and $\Sigma$ is disjoint from $O_0'$.  Thus $A \in \GL_n(R_0')$, where $R_0' = \H(O_0')$. Let $O^- = O_1' \cap O_2$ and let $O^+ = O_1 \cap O_2'$; these are connected open subsets of $\X$.
Write $F_i',F^\pm$ for the field of meromorphic functions on $O_i', O^\pm$, respectively.  By Case~1, there exist $A_i \in \GL_n(F_i')$, for $i=1,2$, such that $A = A_2^{-1}A_1$ in $\GL_n(F_0')$.  It remains to show that $A_i \in \GL_n(F_i)$.  Now $A \in \GL_n(F_0) \subset \GL_n(F^+)$; $A_1 \in \GL_n(F_1')$; and $A_2 \in \GL_n(F_2') \subset \GL_n(F^+)$.  Since $A_1 = A_2A \in \GL_n(F^+)$, and since $F_1' \cap F^+ = F_1$,
it follows that $A_1 \in \GL_n(F_1' \cap F^+) = \GL_n(F_1)$.  Similarly, $F_2' \cap F^- = F_2$;
so $A_2 = A_1A^{-1}$ lies in $\GL_n(F_2' \cap F^-) = \GL_n(F_2)$.
\end{proof}

\begin{prop}\label{prop sdep application}
Let $F$ be a one-variable function field over $\C$, and let $\del$ be a derivation on $F$ with constant field $\C$.  Then every split differential embedding problem over $(F,\del)$ has a proper solution. 
\end{prop}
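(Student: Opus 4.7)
My plan is to deduce Proposition \ref{prop sdep application} from Theorem \ref{thm ebp new} by constructing a differential diamond on the compact Riemann surface $\X$ underlying $F$, placing the given Picard-Vessiot ring and an $N$-Picard-Vessiot ring into appropriate pieces.

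First, by Lemma \ref{change of derivation}, Picard-Vessiot rings and their differential Galois groups are insensitive to scaling the derivation by an element of $F^\times$. Writing $\del = a\cdot d/dx$ for some non-constant meromorphic function $x\in F$ and $a\in F^\times$, I would thus replace $\del$ by $d/dx$, reducing to a setting in which standard complex-analytic constructions apply.

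Second, I would choose connected metric open subsets $O_1,O_2\subsetneq\X$ with $O_1\cup O_2=\X$ and $O_0:=O_1\cap O_2$ connected. By Lemma \ref{int-fact lem}, the corresponding fields $F_i$ of meromorphic functions form a differential diamond $(F,F_1,F_2,F_0)$ with the factorization property; since each $O_i$ is connected, the constant field of every $F_i$ equals $\C$. To invoke Theorem \ref{thm ebp new}, I then need (a) an embedding $R\subseteq F_1$ and (b) a Picard-Vessiot ring $R_1/F_1$ with differential Galois group $N$ satisfying $R_1\subseteq F_0$.

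For (a), using the solution of the inverse differential Galois problem over function fields of complex curves (cf.\ \cite{Tretkoff}, \cite{Oberlies}), I would realize $H$ as the differential Galois group of a Picard-Vessiot ring $R'/F$ whose defining equation has its singular set contained in a prescribed finite subset of $\X\setminus O_1$; a suitable adaptation of Lemma \ref{lem ebp2} to meromorphic functions on an open set of $\X$ then yields $R'\subseteq F_1$. Since Picard-Vessiot rings with a given differential Galois group are unique up to isomorphism over $F$, we may replace the given $R$ by $R'$ in the embedding problem. For (b), an analogous realization provides a Picard-Vessiot ring $\tilde R_1/F$ with differential Galois group $N$ and $\tilde R_1\subseteq F_2$; by Lemma \ref{lem ebp1} (with the roles of $F_1$ and $F_2$ exchanged), the compositum $R_1:=F_1\tilde R_1\subseteq F_0$ is then a Picard-Vessiot ring over $F_1$ with the same Galois group $N$. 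Applying Theorem \ref{thm ebp new} produces the desired proper solution.

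The hard part will be arranging both embeddings simultaneously when $\X$ has positive genus: in that case $O_1$ cannot be taken simply connected, and the Zariski-dense monodromy of an $H$-realization on $O_1$ may obstruct the direct embedding $R\subseteq F_1$. I expect to deal with this by applying Kovacic's trick (Proposition \ref{Kovacic trick}) to lift $R$ to an auxiliary Picard-Vessiot ring with differential Galois group $H^{2m+2d}$ whose defining equation can be prescribed with singularities in $\X\setminus O_1$, and then to extract the sub-Picard-Vessiot ring isomorphic to $R$ which embeds into $F_1$ by construction; the same device applies to $\tilde R_1$.
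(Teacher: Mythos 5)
Your overall architecture (reduce the derivation via Lemma~\ref{change of derivation}, build a diamond of meromorphic function fields via Lemma~\ref{int-fact lem}, put $R$ into $F_1$ and an $N$-realization into $F_0$ via Lemma~\ref{lem ebp1}, then apply Theorem~\ref{thm ebp new}) matches the paper's. But there is a genuine gap in step (a): you assert that ``Picard-Vessiot rings with a given differential Galois group are unique up to isomorphism over $F$'' and use this to replace the given $R$ by a freshly constructed realization $R'$ of $H$ with prescribed singularities. That uniqueness statement is false: uniqueness (over algebraically closed constants) holds only for Picard-Vessiot rings of a \emph{fixed differential equation}, not for a fixed Galois group. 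For example, $\C(x)[\sqrt{x}]$ and $\C(x)[\sqrt{x-1}]$ both have differential Galois group $\Z/2\Z$ but are non-isomorphic over $\C(x)$. Since the embedding problem is posed for the \emph{specific} ring $R$ (a proper solution must satisfy $S^{N_F}\cong R$), substituting a different realization of $H$ changes the problem, and your argument does not solve the one that was given. Your fallback via Kovacic's trick has the same defect: Proposition~\ref{Kovacic trick} again produces \emph{some} Picard-Vessiot ring with group $H$, not the given one.

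The repair is to reverse the order of quantifiers: choose the diamond \emph{after} inspecting $R$. The given $R$ is a Picard-Vessiot ring for a concrete equation $\del(y)=Ay$, and $A$ has only finitely many singularities on $\X$; so one can pick a coordinate $\phi:\X\to\P^1_\C$ and a small disc $D$ about $0$ avoiding both the branch locus of $\phi$ and the singularities of $A$, take $\hat O$ to be one sheet of $\phi^{-1}(D)$, and then choose $O_1$ to be a disc whose closure lies in $\hat O$ (arranged as the preimage of a disc about $\infty$ for a second map $\pi$ ramified only over a point of $\hat O$). Lemma~\ref{lem ebp2}, applied to the identification of $\hat O$ with $D$, then embeds the \emph{given} $R$ into $F_1$ — here the uniqueness used is for the fixed equation $\del(y)=Ay$. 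This construction also dissolves your ``hard part'': $O_1$ is simply connected and contains no singularities of $A$, so there is no monodromy obstruction regardless of the genus of $\X$. Kovacic's trick is genuinely needed, but only on the kernel side, where any Picard-Vessiot ring for $N$ over $F_1$ lying in $F_0$ will do: one realizes $N^{2d}$ over $\C(x)$ (with $d$ the degree of the Galois closure of $F/\C(x)$ under $\pi$), places it in the meromorphic functions on a complementary disc $D''$, and uses Proposition~\ref{Kovacic trick} to extract a subring whose base change to $F$, and then compositum with $F_1$ inside $F_0$, still has group $N$.
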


\begin{proof}
A given split differential embedding problem consists of a semi-direct product $G=N\rtimes H$ of linear algebraic groups and a Picard-Vessiot ring $R/F$ with differential Galois group $H$ for some differential equation $\del(y)=Ay$ over $F$.   

Consider the smooth complex projective curve $\X$ with function field $F$; we may also view this as a compact Riemann surface.  By taking a non-constant rational function on $\X$, we obtain a finite morphism $\phi:\X \to \P^1_\C$, corresponding to a branched cover of Riemann surfaces, and also corresponding to an inclusion $\iota:\C(x) \hookrightarrow F$ of function fields.  If we write $\del'$ for the derivation on $F$ induced via $\iota$ from the derivation $d/dx$ on $\C(x)$, then $\del = g(x)\del'$, where $g(x) := \del(\iota(x)) \in F$ is non-zero (and where we use that the space of derivations of $F$ over $K$ is one dimensional).  So by Lemma~\ref{change of derivation}, 
in order to prove the result we may assume that $\del$ is the derivation on $F$ that extends the derivation $d/dx$ on $\C(x)$.  

Away from a finite subset of $\X$, $\phi$ is unramified and the entries of $A$ are holomorphic.  After a translation $x \mapsto x+c$, we may assume that the fiber over the point $(x=0)$ on $\P^1_\C$ does not contain any point in that finite set.  So there is an open disc $D$ around $0 \in \P^1_\C$ 
such that $\phi^{-1}(D)$ also does not meet that finite set; and then $\phi^{-1}(D) \subset \X$, being unramified over $D$, consists of finitely many disjoint copies of $D$.  Call one of those copies $\hat O$, and let $P$ be the unique point in $\hat O \cap \phi^{-1}(0)$.  The map $\phi$ then defines a differential isomorphism between the fields of meromorphic functions on $\hat O$ and on $D$, which we identify and call $\hat F$.  
Applying Lemma~\ref{lem ebp2} to the inclusion of differential fields $F \subset \hat F$, we obtain an inclusion $R \subseteq \hat F$.

Choose a non-constant regular function on the complex affine curve $\X \smallsetminus \{P\}$; this defines a finite morphism $\pi:\X \to \P^1_\C$ such that $P$ is the unique point mapping to $\infty \in \P^1_\C$.  So there is an open disc $D' \subset \P^1_\C$ centered about $\infty$, whose closure contains no branch point of $\pi$ other than $\infty$, such that $O_1 := \pi^{-1}(D')$ is homeomorphic to an open disc, and such that the closure of $O_1$ is contained in $\hat O$.   
Since $O_1$ is contained in $\hat O$, the field of meromorphic functions $F_1$ on $O_1$ contains $\hat F$; and thus 
$R \subseteq F_1$.  Also, 
the map $\pi$ defines an inclusion $j:\C(x) \hookrightarrow F$ (not the same as the above inclusion $\iota$ defined by $\phi$).  With respect to this inclusion, we may view $F$ as a finite extension of $\C(x)$; let $d$ be the degree of its Galois closure over $\C(x)$.  

It is known that every linear algebraic group over $\C$ is a differential Galois group over $\C(x)$ (\cite{Tretkoff}; see also \cite[Theorem~5.12]{vdPutSinger}).  So there is a matrix $A'$ over $\C(x)$, and a Picard-Vessiot ring $\til R'$ over $(\C(x),d/dx)$ for the differential equation $\del y=A' y$, with 
differential Galois group $N^{2d}$.  Here $A'$ is holomorphic on some open disc in $\A^1_\C$.  A transformation of the form $x \mapsto ax+b$ (with $a \in \C^\times, b \in \C$) takes this open disc to an open disc $D''$ that is centered at the origin and contains $\P^1_\C \smallsetminus D'$.  Thus $D' \cup D'' = \P^1_\C$.  Note that $D' \cap D''$ is homeomorphic to an annulus, as is its inverse image $O_0$ under $\pi$.  Also, $\P^1_\C \smallsetminus D''$ is homeomorphic to a closed disc, as is its inverse image under $\pi$.  Hence 
$O_2 := \pi^{-1}(D'')$, which is the complement in $\X$ of this inverse image, is connected. 

The above transformation $x \mapsto ax+b$ carries $d/dx$ to $a\cdot d/dx$; carries $A'$ to a matrix $A''$ whose entries are holomorphic on $D''$; and carries $\til R'$ to a Picard-Vessiot ring $\til R''$ over $(\C(x),a\cdot d/dx)$ with differential Galois group $N^{2d}$.  By Lemma~\ref{change of derivation}, $\til R''$ is also a Picard-Vessiot ring 
over $(\C(x),d/dx)$ for the differential equation $dy/dx = a^{-1}A''y$, with differential Galois group $N^{2d}$.
Since the entries of $a^{-1}A''$ are also holomorphic on the open disc $D''$, it follows from Lemma~\ref{lem ebp2} that $\til R''$ is contained in the field of meromorphic functions on $D''$.  By Proposition~\ref{Kovacic trick}, 
there is a subring $\til R_0$ of $\til R''$ such that $\til R_0$ is a Picard-Vessiot ring over $(\C(x),d/dx)$ with differential Galois group $N$, and such that $R' := \til R_0 \otimes_{\C(x)} F$ is a Picard-Vessiot ring over $(F,\del)$ with differential Galois group $N$.  Since 
$\til R''$ is contained in the field $F''$ of meromorphic functions on $D''$, so is its subring $\til R_0$.
Tensoring this latter containment of $\C(x)$-algebras with $F$ (which is flat over $\C(x)$), we obtain an 
inclusion $R' \subseteq F'' \otimes_{\C(x)} F = F_2$, where $F_2$ is 
the field of meromorphic functions on $O_2 = \pi^{-1}(D'')$.  

Let $F_0$ denote the field of meromorphic functions on $O_0 := O_1 \cap O_2$. By Lemma~\ref{int-fact lem}, $(F,F_1,F_2,F_0)$ is a differential diamond with the factorization property. Since $R'\subseteq F_2$ and  $C_F=C_{F_0}=\C$, we may apply Lemma~\ref{lem ebp1} and obtain that the compositum $R_1=F_1R' \subseteq F_0$ is a Picard-Vessiot ring over $F_1$ with differential Galois group $N$. Recall that $R\subseteq F_1$. We conclude that by Theorem~\ref{thm ebp new}, the differential embedding problem given by $G=N\rtimes H$ together with $R$ has a proper solution. 
\end{proof}

\begin{prop} \label{prop: SEP to EP}
Let $F$ be a one-variable function field over an algebraically closed field
$K$ of characteristic zero, and let $\del$ be a derivation on $F$ with constant field $K$. 
Suppose that every split differential embedding problem over $F$ has a proper solution. Then the same holds for \textbf{\textit{every}} differential embedding problem over $F$.
\end{prop}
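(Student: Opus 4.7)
Let $(\phi\colon G \twoheadrightarrow H, R)$ be the given differential embedding problem, let $N := \ker \phi$, and let $X_R := \Spec(R)$, viewed as a simple differential $H_F$-torsor. By Lemma~\ref{ebp-solution-criterion}, producing a proper solution is equivalent to constructing a simple differential $G_F$-torsor $Y = \Spec(S)$ over $F$ together with an isomorphism $Y/N \cong X_R$ of differential $H_F$-torsors. My plan is to build $Y$ in two stages: first as an abstract $G_F$-torsor sitting over $X_R$, and then to promote it to a differential torsor via the transport results of Section~\ref{sec: Differential torsors}.

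\emph{Stage 1 (underlying lift).} I would first realize an underlying $G_F$-torsor $Y_0$ together with a $\phi$-equivariant morphism $\psi\colon Y_0 \to X_R$ whose induced map $Y_0/N \to X_R$ is an isomorphism of (non-differential) $H_F$-torsors. Such a $Y_0$ exists as soon as $H^1(F, G) \to H^1(F, H)$ is surjective. Over a one-variable function field $F$ with algebraically closed constants $K$ of characteristic zero, this surjectivity is available: by Steinberg and Tsen--Lang one has $H^1(F, G^\circ) = 1$ for the connected component, so torsors are classified by the finite component groups, and the remaining lifting problem $H^1(F, \pi_0 G) \to H^1(F, \pi_0 H)$ reduces to the solvability of ordinary embedding problems of finite groups over $F$, a classical consequence of the solution of the inverse Galois problem over such function fields (Harbater--Pop).

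\emph{Stage 2 (transport of differential structure).} With $\psi$ in hand, Proposition~\ref{prop: moving differential structures}\ref{moving b}, applicable because $K$ is algebraically closed and $\phi$ is surjective, produces a differential structure on $Y_0$ making $\psi$ a differential morphism. The induced map $Y_0/N \to X_R$ is then automatically an isomorphism of differential $H_F$-torsors.

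\emph{Stage 3 and main obstacle.} It remains to verify that with this transported differential structure $F[Y_0]$ is a \emph{simple} differential ring with $C_{F[Y_0]} = K$; granted this, Corollary~\ref{cor PV equiv}\ref{PV equiv a} identifies $F[Y_0]$ as a Picard-Vessiot ring with differential Galois group $G$, and Lemma~\ref{ebp-solution-criterion} supplies the proper solution. This is the principal difficulty: the $N$-invariants $F[Y_0]^N = R$ are simple with constants $K$, but simplicity of the ambient ring $F[Y_0]$ is not automatic from Proposition~\ref{prop: moving differential structures}\ref{moving b}. To propagate it, I would pass to $E := \Frac(R)$ and use the hypothesis (applied to the split problem with $H = 1$ and group $N^{2m+2d}$) together with Proposition~\ref{Kovacic trick} to obtain a Picard-Vessiot ring for $N$ over $F$ whose base change to $E$ remains Picard-Vessiot with group $N$; matching this up with the $N$-torsor structure $Y_0 \to X_R$ after base change to $E$ should exhibit $F[Y_0] \otimes_R E$ as a Picard-Vessiot ring with group $N$ over $E$, hence simple with constant field $K$. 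Simplicity and the constants computation for $F[Y_0]$ then descend via faithful flatness of $F[Y_0]$ over $R$ (in the spirit of Lemma~\ref{lem patching}) together with the simplicity of $R$. Checking that the transported differential structure is really the one coming from such a Picard-Vessiot realization of $N$ over $E$ is the technical heart of the reduction.
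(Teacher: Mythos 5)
Your Stages 1 and 2 match the paper's opening moves: lift the torsor class along $H^1(F,G)\to H^1(F,H)$ (the paper gets surjectivity in one step from Serre's theorem on fields of dimension $\le 1$, rather than via Steinberg plus finite embedding problems, but that is a cosmetic difference) and transport the differential structure using Proposition~\ref{prop: moving differential structures}\ref{moving b}. The genuine gap is exactly where you locate it, in Stage 3, and the mechanism you propose does not close it. Proposition~\ref{prop: moving differential structures}\ref{moving b} only asserts the \emph{existence} of some compatible differential structure on $Y_0$; it gives you no control over which one, and distinct compatible structures on the same underlying torsor can differ in simplicity (cf.\ Example~\ref{Gl_n example}, where $A=(0)$ and $A=(1)$ give a non-simple and a simple structure on the same trivial $\Gm$-torsor). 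Your plan to realize $N$ over $E=\Frac(R)$ via the inverse problem for $N^{2m+2d}$ and Proposition~\ref{Kovacic trick} produces \emph{some} simple differential $N_E$-torsor, but there is no identification of it with $F[Y_0]\otimes_R E$ carrying its transported structure -- neither as a differential torsor nor, a priori, as a bare torsor -- so you cannot conclude that $F[Y_0]\otimes_R E$ is simple. Arranging such an identification would amount to already having solved the embedding problem.

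The paper avoids this by never trying to make the lifted torsor $X=Y_0$ simple. Instead it applies Proposition~\ref{prop: torsors are ind} to write $X=\Ind_J^G(Y)$ for a closed subgroup $J\le G$ and a \emph{simple} differential $J$-torsor $Y=\Spec(S)$, so that $S$ is a Picard-Vessiot ring for $J$; it then uses Lemma~\ref{lem: simple not ind} (applied to $Z\cong \Ind_{JN/N}^{G/N}(Y/(N\cap J))$, which is simple) to deduce $JN=G$, and solves the auxiliary \emph{split} embedding problem $(N\rtimes J, S)$ supplied by the hypothesis. Pushing the resulting Picard-Vessiot ring $\tilde R$ forward along $N\rtimes J\twoheadrightarrow G$ (i.e., taking $M$-invariants for $M=\ker(N\rtimes J\to G)\cong N\cap J$) yields the proper solution. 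Note that this uses the full hypothesis -- solvability of split embedding problems over the nontrivial Picard-Vessiot ring $S$ -- whereas your argument only ever invokes the inverse problem (the case $H=1$); that weaker input cannot suffice for the reduction.
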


\begin{proof}
Consider a differential embedding problem given by an exact sequence of linear algebraic $K$-groups 
$1 \to N \to G \to H \to 1$ and a Picard-Vessiot ring $R$ for $H$ over $F$.  As in Proposition~\ref{prop PVR}, $Z := \Spec(R)$ is a simple differential $H$-torsor over $F$, corresponding to an element $\alpha \in H^1(F,H)$ under the functorial bijection between torsors and cohomology classes (e.g.\ see \cite[I.5.2 and III.1.1]{Serre:CG}).  Since $K$ is algebraically closed, 
it follows from \cite[II.3.3.Ex.~3 and II.3.1.Proposition~6(a)]{Serre:CG} that 
$F$ is a field of dimension one
(or equivalently, of cohomological dimension 
one, since $\cha(F)=0$ [Ser97, II.3.1]). 
Hence $F$ satisfies the hypotheses of
\cite[III.2.4, Theorem~3]{Serre:CG}.  Thus Corollary~2 of that theorem applies; i.e., the surjection 
$G \to H$ induces a surjection $H^1(F,G) \to H^1(F,H)$.  

Let $\tilde \alpha \in H^1(F,G)$ be an element that maps to $\alpha$, and let $X$ be the corresponding $G$-torsor over $F$.  We thus have a morphism of $G$-spaces $X \to Z$ that is constant on $N$-orbits (since $G$ acts on $Z$ through $H=G/N$).  By the universal mapping property of the quotient torsor (see Proposition~\ref{prop X/N} and the discussion just before), this morphism factors through $X/N$.  The resulting map $\iota:X/N \to Z$ is an 
$H$-torsor morphism, which is automatically an isomorphism.  
The isomorphism $\iota$ defines a differential structure on $X/N$.
By Proposition~\ref{prop: moving differential structures}\ref{moving b},
there is a differential structure on $X$ with respect to which 
the quotient morphism $\psi:X \to X/N$ is differential.  
Let $\pi: X \to Z$ be the composition $\iota \circ \psi$, which is then also differential.  

By Proposition~\ref{prop: torsors are ind}, there is a closed subgroup $J$ of $G$ such that
$X = \Ind_J^G Y$ for some simple differential $J$-torsor $Y$ such that the fraction field of the coordinate ring $S=F[Y]$ has no new constants.  Thus $S$ is a Picard-Vessiot ring for $J$ over $F$, by Proposition~\ref{prop PVR}\ref{PVR b}.  By the defining property of induced torsors, 
we have a $J$-equivariant inclusion $Y \hookrightarrow X$.  Applying $\pi$  gives a $J/(N \cap J)$-equivariant inclusion $Y/(N \cap J) \to X/N \,\tilde\longrightarrow\, Z$.  Here $Y/(N \cap J)$ is a torsor under $J/(N \cap J) = JN/N$, and $Z$ is a torsor under $H = G/N$, and the above injection is equivariant with respect to the natural inclusion $JN/N \subseteq G/N$.  (Here $JN$ is the closed subgroup of $G$ generated by $J$ and $N$.)  Also, $Z \cong X/N$ is of the form $(\Ind_J^G Y)/N = \Ind_{J/(N \cap J)}^{G/N} \bigl(Y/(N \cap J)\bigr) = \Ind_{JN/N}^{G/N} \bigl(Y/(N \cap J)\bigr)$, using Proposition~\ref{lem ind mod}.
But $Z = \Spec(R)$ is a simple differential $H$-torsor with no new constants, since $R$ is a Picard-Vessiot ring for $H$.  So Lemma~\ref{lem: simple not ind} implies that $JN/N=G/N$; i.e., $JN=G$ and the restriction of $G \to G/N = H$ to $J$ is surjective.  So the above $JN/N$-equivariant inclusion $Y/(N \cap J) \to Z$ is actually an inclusion of $G/N$-torsors, and it is therefore an isomorphism.  Hence the composition $Y \to Y/(N \cap J) \,\tilde\longrightarrow\, Z$ is surjective; i.e., $Z$ is a quotient of $Y$ as a differential torsor.  

Let $\tilde G$ be the semi-direct product $N \rtimes J$, where $J$ acts on $N$ via conjugation (as subgroups of $G$).  Let $M$ be the kernel of the surjection $\phi:\tilde G \to G$ 
given by $(n,j) \mapsto nj$. Let $\bar\phi:J \to H$ be the composition $J \hookrightarrow G \to G/N = H$.
We then have the following commutative diagram of groups with exact rows and columns, and where the middle row is split exact:
\[
\xymatrix  {
         &                 & 1 \ar[d]         & 1 \ar[d] & \\
         & 1 \ar[r] \ar[d] & M  \ar[r] \ar[d] & N \cap J  \ar[r] \ar[d] & 1 \\
1 \ar[r] & N \ar[r] \ar[d]^{=} & \tilde G \ar[r] \ar[d]^\phi & J \ar[r] \ar[d]^{\bar\phi} & 1 \\
1 \ar[r] & N \ar[r] \ar[d] & G \ar[r] \ar[d] & H \ar[r]  \ar[d] & 1 \\
         & 1               & 1                          & 1
}
\]
Here the map $M \to N \cap J$ is an isomorphism, given by $(a^{-1},a) \mapsto a$ for $a \in N \cap J$.

Consider the split differential embedding problem given by the middle row and the Picard-Vessiot ring $S$ for $J$.  
By the hypothesis of the proposition, this has a proper solution, given by a Picard-Vessiot ring $\tilde R$ for $\tilde G$ that contains $S$ as a differential subring.  Its spectrum is a differential $\tilde G$-torsor $\tilde W$ such that $\tilde W/N \,\tilde\longrightarrow\, Y$ as differential $J$-torsors.  Since $Y/(N \cap J) \to X/N \,\tilde\longrightarrow\, Z$, and since the above diagram commutes, the differential $G$-torsor $W := \tilde W/M$ satisfies $W/N \,\tilde\longrightarrow\, Z$.  
By the Galois correspondence (\cite[Thm.~4.4]{Dyc} or the observation after Proposition~\ref{prop X/N diff}),
the coordinate ring $\tilde R^M$ of $W$ is a Picard-Vessiot ring for $G = \tilde G/M$ over $F$, and 
$(\tilde R^M)^N \,\tilde\longrightarrow\, R$.  So $\tilde{R}^M$ is a proper solution to the given differential embedding problem.
\end{proof}

Combining Proposition~\ref{prop sdep application} with Proposition~\ref{prop: SEP to EP}, we obtain:

\begin{thm}\label{thm ebp application}
Let $F$ be any one-variable complex function field, together with a non-trivial $\C$-linear derivation $\del$.  Then
every differential embedding problem over $(F,\del)$ has a proper solution. 
\end{thm}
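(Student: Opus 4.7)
The plan is essentially to combine the two immediately preceding propositions. First I would note that Proposition~\ref{prop sdep application} already establishes the theorem for split differential embedding problems: every split differential embedding problem over $(F,\del)$ admits a proper solution. Then I would invoke Proposition~\ref{prop: SEP to EP}, whose hypothesis is precisely that every split differential embedding problem over $(F,\del)$ has a proper solution and whose conclusion upgrades this to \emph{every} differential embedding problem. The hypotheses of Proposition~\ref{prop: SEP to EP}---namely that $K = C_F = \C$ be algebraically closed of characteristic zero and that $F$ be a one-variable function field over $K$---are manifest in the setup of the theorem.

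The substantive mathematics has been absorbed into the two propositions, and no further work is needed. The main obstacle, handled in Proposition~\ref{prop sdep application}, is to construct a differential diamond $(F,F_1,F_2,F_0)$ with the factorization property such that the given Picard-Vessiot ring $R$ for $H$ embeds into $F_1$ and such that the kernel $N$ is realized as a differential Galois group over $F_1$ by a Picard-Vessiot ring contained in $F_0$; this is achieved by viewing $F$ as the field of meromorphic functions on a compact Riemann surface, using Lemma~\ref{int-fact lem} for the factorization property, Lemma~\ref{lem ebp2} to embed Picard-Vessiot rings into fields of meromorphic functions on discs, the Kovacic trick (Proposition~\ref{Kovacic trick}) to transport realizations from $\C(x)$ to $F$ while preserving the differential Galois group, and finally Theorem~\ref{thm ebp new} to patch everything together. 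The main obstacle in Proposition~\ref{prop: SEP to EP}, in turn, is the reduction from arbitrary to split embedding problems, which proceeds by lifting the $H$-torsor $\Spec(R)$ to a $G$-torsor using $\mathrm{cd}(F)\leq 1$, transporting a differential structure via Proposition~\ref{prop: moving differential structures}, writing this torsor as an induced torsor from a simple differential $J$-torsor for some closed $J\subseteq G$ that surjects onto $H$, solving the resulting split problem for $N\rtimes J$, and descending by taking invariants.

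Thus the proof amounts to a single line: by Proposition~\ref{prop sdep application}, every split differential embedding problem over $(F,\del)$ is properly solvable, and so by Proposition~\ref{prop: SEP to EP} the same holds without the splitness hypothesis. No step is technically difficult at this level; the weight of the argument lies entirely in the preparatory patching, torsor-theoretic, and Riemann-surface-theoretic results already established.
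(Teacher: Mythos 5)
Your proposal is correct and is exactly the paper's argument: the paper derives Theorem~\ref{thm ebp application} in one line by combining Proposition~\ref{prop sdep application} (the split case) with Proposition~\ref{prop: SEP to EP} (reduction of arbitrary to split embedding problems), noting as you do that a non-trivial $\C$-linear derivation on a one-variable complex function field automatically has constant field $\C$, so the hypotheses of both propositions are met.
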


\begin{ex} Let $\Gamma$ be a lattice and let $F$ be the field of elliptic functions with respect to~$\Gamma$. Then every differential embedding problem over $F$ has a proper solution. 
\end{ex}
\begin{proof}
Recall that $F$ is generated over $\C$ by the Weierstra{\ss} function $\wp$ and its derivative~$\wp'$ which satisfy the equation $(\wp')^2=4\wp^3-g_2(\Gamma)\wp-g_3(\Gamma)$. Therefore, $F$ is a one-variable function field over $\C$ and it has a non-trivial derivation, so the claim follows from Theorem~\ref{thm ebp application}.
\end{proof}

\appendix\section{Torsors, 	quotients, and induction} \label{sec: Preliminaries on torsors}

This appendix contains basic results about torsors, group actions, and quotients, which are used in the body of the paper in a differential context.  Many of these results are known, but for lack of a good reference we include them here.
Throughout this appendix, $K$ is an arbitrary field, and $G$ is an affine group scheme of finite type over $K$.

\subsection{$G$-spaces} \label{subsec Gspaces}

An \textit{affine $G$-space over} $K$ is an affine scheme $X$ of finite type over $K$ together with a morphism $\alpha \colon X \times_K G \to X$ such that $\alpha_R\colon X(R)\times G(R)\to X(R)$ defines a right group action of $G(R)$ on $X(R)$ for every $K$-algebra $R$.
We will usually write $x.g$ for $\alpha_R(x,g)$ when $x\in X(R)$ and $g\in G(R)$.
A \textit{morphism} of $G$-spaces is a $G$-equivariant morphism of schemes.
The $G$-space structure $X$ corresponds to a {\em co-action} homomorphism $\rho \colon K[X] \to K[X] \otimes_K K[G]$ (where $K[X]$ is the affine coordinate ring of $X$, and similarly for $K[G]$). 

In addition to this co-action $\rho$, there is also a {\em functorial left group action} of $G$ on the coordinate ring $K[X]$.  That is, for 
every $K$-algebra $R$, there is a left action
\begin{equation} \label{eq: functorial action} 
G(R)\times (K[X]\otimes_K R)\to K[X]\otimes_K R, \ (h,f)\mapsto h(f)
\end{equation}
of the group $G(R)$ on $K[X]\otimes_K R$, and these are compatible.  To define the action in (\ref{eq: functorial action}), first note that we can identify $K[X]\otimes_K R$ with the morphisms from $X_R$ to $\mathbb{A}^1_R$.  Namely, an element $f\in K[X]\otimes_K R$ is interpreted as a morphism $f\colon X_R\to\mathbb{A}^1_R$ by sending an element $x\in X_R(S)=\Hom_R(K[X]\otimes_K R,S)$ to $x(f)$ for every $R$-algebra $S$. So for $g \in G(R)$ and $f$ as above, we can define $g(f)$ by
$$g(f)(x)=f(x.g)$$
for every $R$-algebra $S$ and every $x\in X_R(S)$. Alternatively, $g(f)$ can be described as the image of $f\in K[X]\otimes_K R$ under the composition
\begin{equation} \label{eq: H-action}
K[X]\otimes_K R\xrightarrow{\rho\otimes\operatorname{id_R}} K[X]\otimes_K K[G]\otimes_K R\to K[X]\otimes_K R,
\end{equation}
where the last map is given by $a\otimes b\otimes r\mapsto a\otimes g(b)r$ for $a\in K[X]$, $b\in K[G]$, $r\in R$ and $g\in G(R)=\Hom_K(K[G],R)$, and where $g(b)$ denotes the image of $b$ under $g:K[G] \to R$.

Given an affine $G$-space $X$ and a closed subgroup $H$ of $G$, we may consider $X$ as an $H$-space.  
We then obtain a functorial left action of $H$ on $K[X]$; this is the restriction of the above functorial action of $G$.  An element $f\in K[X]$ is $H$-{\em invariant} if $h(f\otimes 1)=f\otimes 1$ under the left action of $H(R)$ on $K[X]\otimes_K R$, for every $K$-algebra $R$ and $h \in H(R)$.
These $H$-invariant elements form a subring denoted by $K[X]^H$, the {\em (functorial) invariants} in $K[X]$.  

Observe that $f\in K[X]$ is invariant under $H$ if and only if $\rho_H(f)=f\otimes 1$, where
$\rho_H \colon K[X]\to K[X]\otimes_K K[H]$ is the co-action corresponding to the action of $H$ on $X$.  The co-action maps for $G$ and $H$ are related by the identity 
$\rho_H = \pi \circ \rho$, where $\pi:K[G]\to K[H]$ is the homomorphism corresponding to the inclusion of $H$ into $G$.  This identity shows that the map in the display (\ref{eq: H-action}) above agrees with the corresponding map with $G$ replaced by $H$ and $\rho$ by $\rho_H$, if the element $g \in G(R)$ that is used in (\ref{eq: H-action}) lies in $H(R)$.

\begin{lem} \label{lemma:invariant ideal}
	For an ideal $J\trianglelefteq K[X]$ the following conditions are equivalent:
\renewcommand{\theenumi}{(\alph{enumi})}
\renewcommand{\labelenumi}{(\alph{enumi})}
	\begin{enumerate}
		\item \label{stable cond a} $H(R)(J\otimes_K R)\subseteq J\otimes_K R$ for all $K$-algebras $R$.
		\item \label{stable cond b} $Z=\Spec(K[X]/J)$ is $H$-stable; i.e., $Z(R).H(R)\subseteq Z(R)$ for all $K$-algebras $R$.
		\item \label{stable cond c} $\rho_H(J)\subseteq J\otimes_K K[H]$.
	\end{enumerate}
\end{lem}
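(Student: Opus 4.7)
The plan is to establish the two equivalences $(c)\Leftrightarrow(b)$ and $(c)\Leftrightarrow(a)$ independently, since each pairs up conditions via a natural universal-element argument. Throughout, I will interpret each condition on the level of the co-action $\rho_H\colon K[X]\to K[X]\otimes_K K[H]$.

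For $(c)\Leftrightarrow(b)$, I would unravel the definition of the action on $R$-points: an element $x\in X(R)$ lies in $Z(R)$ iff the homomorphism $x\colon K[X]\to R$ annihilates $J$, and $x.g$ for $g\in H(R)$ is the composite $(x\otimes g)\circ\rho_H$. Condition~(b) then says that for every $K$-algebra $R$, every $x$ killing $J$, and every $g\in H(R)$, the homomorphism $(x\otimes g)\circ \rho_H$ again kills $J$. The implication $(c)\Rightarrow(b)$ is then immediate, since $(x\otimes g)(J\otimes_K K[H])=x(J)\cdot g(K[H])=0$. For the converse, I would test (b) on the universal pair $R=(K[X]/J)\otimes_K K[H]$, with $x$ the obvious composite $K[X]\twoheadrightarrow K[X]/J\hookrightarrow R$ into the first factor and $g$ the inclusion $K[H]\hookrightarrow R$ into the second factor. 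Then $(x\otimes g)\circ\rho_H$ is visibly the map $\bigl((K[X]\twoheadrightarrow K[X]/J)\otimes \id_{K[H]}\bigr)\circ\rho_H$, whose vanishing on $J$ is precisely the containment $\rho_H(J)\subseteq J\otimes_K K[H]$ asserted in~(c).

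For $(c)\Leftrightarrow(a)$, I would use the explicit description of the functorial action from the display (\ref{eq: H-action}) (applied with $G,\rho$ replaced by $H,\rho_H$, as justified in the paragraph following that display). Given (c), the image $(\rho_H\otimes \id_R)(J\otimes_K R)$ is contained in $J\otimes_K K[H]\otimes_K R$, and the final map $a\otimes b\otimes r\mapsto a\otimes g(b)r$ visibly lands in $J\otimes_K R$; this yields (a) by $R$-linearity. For the converse, I would feed the tautological universal element into~(a): take $R=K[H]$ and $g=\id_{K[H]}\in H(K[H])=\Hom_K(K[H],K[H])$. Then for $f\in J$, writing $\rho_H(f)=\sum a_i\otimes b_i$, the formula for the action collapses to $g(f\otimes 1)=\sum a_i\otimes g(b_i)=\sum a_i\otimes b_i=\rho_H(f)$, which lies in $J\otimes_K K[H]$ by~(a); this is precisely~(c). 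No serious obstacle is expected; the only subtle point is identifying the correct universal element (the identity map, tautological but essential) that forces each reverse direction, and keeping the two distinct $H$-actions (on $X(R)$ versus on $K[X]\otimes_K R$) properly related via $\rho_H$.
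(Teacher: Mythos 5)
Your proof is correct, but it organizes the equivalences differently from the paper: you pivot on condition (c), proving (c)$\Leftrightarrow$(b) and (c)$\Leftrightarrow$(a), whereas the paper proves (a)$\Leftrightarrow$(b) and (a)$\Leftrightarrow$(c). The (a)$\Leftrightarrow$(c) portion is essentially identical in both treatments --- the same tautological element $g=\id_{K[H]}\in H(K[H])$ collapses the action formula to $\rho_H$ itself. The difference lies in how (b) is handled: the paper links (b) to (a) by describing $Z(R)$ as the points annihilating $J\otimes_K R$ and, conversely, $J\otimes_K R$ as the functions vanishing on $Z(S)$ for all $R$-algebras $S$, then using the identity $h(f)(z)=f(z.h)$; you instead link (b) directly to (c) by evaluating at the universal point of $Z$ over $R=(K[X]/J)\otimes_K K[H]$, which identifies $(x\otimes g)\circ\rho_H$ with $(\pi\otimes\id_{K[H]})\circ\rho_H$ and reads off (c) from the flatness of $K[H]$ over $K$. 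Your route is slightly more economical in that it avoids justifying the paper's characterization of $J\otimes_K R$ as an intersection of vanishing conditions (which itself secretly rests on the same universal-point argument), at the cost of not recording the pointwise identity $h(f)(z)=f(z.h)$ that the paper reuses implicitly elsewhere. Both arguments are complete and correct.
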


\begin{proof}
	To see \ref{stable cond a} $\Rightarrow$ \ref{stable cond b}, let $R$ be a $K$-algebra and let $z\in Z(R),\ h\in H(R)$ and $f\in J\otimes_K R$. Since
	$$Z(R)=\{x\in X(R)\,|\, f(x)=0 \ \mathrm{for\ all} \ f\in J\otimes_K R\}$$
	and $h(f)\in J\otimes_K R$,
	we have $f(z.h)=h(f)(z)=0$. Therefore $z.h\in Z(R)$.
	
	To prove \ref{stable cond b} $\Rightarrow$ \ref{stable cond a}, note that
	$$J\otimes_K R=\{f\in K[X]\otimes_K R\,|\, f(z)=0\  \text{for\ all} \ R\text{-algebras } S \ \text{and\ all} \ z\in Z(S) \}$$
	If $f\in J\otimes_K R$, $h\in H(R)$, $z\in Z(S)$, then
	$h(f)(z)=f(z.h)=0$ because $z.h\in Z(S)$. Therefore $h(f)\in J\otimes_K R$.
	
    In order to prove \ref{stable cond a} $\Rightarrow$ \ref{stable cond c}, 
    consider display (\ref{eq: H-action}) with $G$ replaced by $H$, taking $R=K[H]$,
    and letting $g$ be the element $h \in H(R)$ that corresponds to the identity map
    in $\Hom_K(K[H],R)$.  Taking $j \in J$ and using the display in that situation,
    we see that $h(j \otimes 1) = \rho_H(j)$.
    But part~\ref{stable cond a} says in particular that $h(j \otimes 1) \in J \otimes_K R$.
    So \ref{stable cond c} follows.
	
	Finally, \ref{stable cond c} $\Rightarrow$ \ref{stable cond a} follows from the description of the $H$-action in (\ref{eq: H-action}) with $G$ replaced by $H$.
\end{proof}

An ideal $J\trianglelefteq K[X]$ satisfying the equivalent conditions of Lemma \ref{lemma:invariant ideal} is called \textit{$H$-stable}.
We will frequently use that every ideal generated by elements in $K[X]^H$ is $H$-stable.

If $H_1,\dots,H_m$ are closed subgroups of $G$, the smallest closed subgroup of $G$ that contains all $H_i$'s is called the closed subgroup {\em generated} by the $H_i$'s.

\begin{lem}\label{lem invariants}
Consider $X=G$ as a $G$-space via the action given by the formula $x.g=g^{-1}x$. Let $H$ be the closed subgroup of $G$ generated by the closed subgroups $H_1,\dots,H_m$ of $G$. Then the following holds:
\renewcommand{\theenumi}{(\alph{enumi})}
\renewcommand{\labelenumi}{(\alph{enumi})}
	\begin{enumerate}
		\item \label{stable ideals a} $\bigcap \limits_{i=1}^m K[G]^{H_i}=K[G]^H$ and $K[G]^G=K$.
		\item \label{stable ideals b} Every ideal in $K[G]$ that is $H_i$-stable for all $i=1,\dots,m$ is $H$-stable.
		\item \label{stable ideals c} The only proper $G$-stable ideal in $K[G]$ is the zero ideal.
	\end{enumerate}
\end{lem}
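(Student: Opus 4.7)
The plan is to prove all three parts via stabilizer subfunctors of $G$ together with the Hopf algebra structure of $K[G]$.

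For part~\ref{stable ideals a}, the containment $K[G]^H\subseteq\bigcap_i K[G]^{H_i}$ is immediate. For the reverse, I would introduce the stabilizer $\operatorname{Stab}(f)\colon R\mapsto\{g\in G(R):g\cdot(f\otimes 1)=f\otimes 1\}$, which is evidently a subgroup functor of $G$. Writing the co-action as $\rho(f)=\sum_i f_i\otimes b_i$ (a finite sum), the condition $g\cdot f=f$ in $K[G]\otimes_K R$ becomes finitely many polynomial equations on $g\in G$ upon expanding in a fixed $K$-basis of $K[G]$, since only finitely many basis elements appear with nonzero coefficient in $f$ and the $f_i$'s; this shows $\operatorname{Stab}(f)$ is a closed subgroup scheme. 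As it contains each $H_i$, it contains $H$, giving $f\in K[G]^H$. For the equality $K[G]^G=K$, specializing to the generic point $g=\mathrm{id}_{K[G]}\in G(K[G])$ shows that $f$ is $G$-invariant iff $\rho(f)=f\otimes 1$ in $K[G]\otimes K[G]$. Using the formula $\rho(f)=\sum f^{(2)}\otimes S(f^{(1)})$, which follows from $\rho(f)(x,g)=f(g^{-1}x)=\Delta(f)(g^{-1},x)$, together with $S^2=\mathrm{id}$ (valid since $K[G]$ is commutative), this rewrites as $\Delta(f)=1\otimes f$; applying $\mathrm{id}\otimes\epsilon$ then yields $f=\epsilon(f)\in K$.

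Part~\ref{stable ideals b} is analogous for ideals. I would consider $\operatorname{Stab}(J)\colon R\mapsto\{g\in G(R):g(J\otimes_K R)=J\otimes_K R\}$, a subgroup functor of $G$. Since $K[G]$ is Noetherian, $J$ is finitely generated; expanding the stability condition for a set of generators in a $K$-basis of $K[G]/J$ (as in part~\ref{stable ideals a}) gives finitely many polynomial equations on $g$, so $\operatorname{Stab}(J)$ is a closed subgroup scheme of $G$. Each $H_i$ lies in $\operatorname{Stab}(J)$ (applied to both $h$ and $h^{-1}$ for $h\in H_i(R)$, the inclusion in Lemma~\ref{lemma:invariant ideal}\ref{stable cond a} yields the equality on the nose), and hence so does $H$.

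For part~\ref{stable ideals c}, I would first convert $G$-stability into a coalgebra statement: from $\rho=\tau\circ(S\otimes\mathrm{id})\circ\Delta$ (with $\tau$ the swap) and $S^2=\mathrm{id}$, the condition $\rho(J)\subseteq J\otimes K[G]$ of Lemma~\ref{lemma:invariant ideal}\ref{stable cond c} is equivalent to $\Delta(J)\subseteq K[G]\otimes J$, i.e., $J$ is a right coideal of $K[G]$. The antipode identity $m\circ(S\otimes\mathrm{id})\circ\Delta=\eta\circ\epsilon$ then gives, for every $j\in J$, that $\epsilon(j)\cdot 1=m((S\otimes\mathrm{id})\Delta(j))\in J$, because $(S\otimes\mathrm{id})\Delta(j)\in K[G]\otimes J$ and $J$ is an ideal. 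If some $j\in J$ has $\epsilon(j)\ne 0$, then $1\in J=K[G]$, contrary to properness; otherwise $J\subseteq\ker\epsilon$, and applying $\mathrm{id}\otimes\epsilon$ to $\Delta(j)\in K[G]\otimes J\subseteq K[G]\otimes\ker\epsilon$ forces $j=(\mathrm{id}\otimes\epsilon)\Delta(j)=0$, so $J=(0)$. The principal obstacle is verifying representability of the stabilizer functors in parts~\ref{stable ideals a} and~\ref{stable ideals b}, which requires careful bookkeeping with the co-action despite $K[G]$ being generally infinite-dimensional over $K$; once this is in place, parts~\ref{stable ideals a} and~\ref{stable ideals b} follow immediately, and part~\ref{stable ideals c} reduces to the standard Hopf algebra manipulations above.
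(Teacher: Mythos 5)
Your proposal is correct, and for the substantive parts it follows the same strategy as the paper: for the first claim of \ref{stable ideals a} and for \ref{stable ideals b}, the paper likewise introduces the stabilizer subfunctor of an invariant element (resp.\ of an ideal), shows it is closed by writing $\rho(f)=\sum f_i\otimes g_i$ in a $K$-basis adapted to $f$ (resp.\ to $J$) so that the condition becomes the vanishing of finitely many regular functions, and concludes that this closed subgroup contains $H$ because it contains each $H_i$. One point to tidy in \ref{stable ideals b}: the finitely-many-equations argument directly exhibits the \emph{containment} locus $Z(R)=\{g:g(J\otimes_KR)\subseteq J\otimes_KR\}$ as closed, whereas the subgroup you call $\operatorname{Stab}(J)$ is $Z\cap Z^{-1}$ (the paper passes to this intersection explicitly); your parenthetical about applying stability to both $h$ and $h^{-1}$ shows you see the issue, but the representability claim as written conflates the two. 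Where you genuinely diverge is in the two short statements. For $K[G]^G=K$ the paper just evaluates, $f(g)=f(1.g^{-1})=f(1)\in K$, while your route via $\rho(f)=f\otimes 1\Rightarrow\Delta(f)=1\otimes f\Rightarrow f=\epsilon(f)$ is an equivalent, purely Hopf-algebraic computation. For \ref{stable ideals c} the paper cites Lemma~\ref{lemma:invariant ideal}\ref{stable cond b} and the fact that $G$ is the only nonempty $G$-stable closed subscheme of itself (which still requires a short base-change-to-get-a-point-and-translate argument); your antipode computation --- $\epsilon(j)\cdot 1=\sum S(j^{(1)})j^{(2)}\in J$ forces $J\subseteq\ker\epsilon$, and then $j=(\operatorname{id}\otimes\epsilon)\Delta(j)=0$ --- is a clean algebraic substitute that uses no points and no reducedness, so it applies verbatim to arbitrary affine group schemes of finite type, matching the generality of the appendix. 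Both routes are valid; the paper's is shorter, yours makes explicit why the "clear" step is actually clear.
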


\begin{proof}
For part \ref{stable ideals a}, observe that
$K[G]^{H}\subseteq \bigcap \limits_{i=1}^m K[G]^{H_i}$. For any $K$-algebra $R$ write
	$$H'(R):=\left\{h\in  G(R)\,|\, h(f\otimes 1)=f\otimes 1 \ \text{for\ all}  \ f\in \bigcap_{i=1}^m K[G]^{H_i} \right\}.$$
	We claim that $H'$ is a closed subgroup of $H$. To see this it suffices to show that the condition  $h(f\otimes 1)=f\otimes 1$ is a closed condition for any non-zero $f\in K[G]$: Let $(f_i)$ be a $K$-basis of $K[X]=K[G]$ such that $f_0=f$ and write $\rho(f)=\sum f_i\otimes g_i\in K[X]\otimes_K K[G]$. Then $h(f\otimes 1)=\sum f_i\otimes h(g_i)$ for $h\in G(R)=\Hom_K(K[G],R)$. Therefore,  $h(f\otimes 1)=f\otimes 1$ if and only if $h(g_0)=1$ and $h(g_i)=0$ for $i\neq 0$. Thus $H'$ is the closed subscheme of $G$ defined by $g_0-1$ and $g_i$ for $i\neq 0$, proving the claim. 
	
	Because $H'$ is a closed subgroup of $G$ containing $H_1,\ldots,H_m$ we must have $H\subseteq H'$; i.e., $\bigcap \limits_{i=1}^m K[G]^{H_i}\subseteq K[G]^{H}$.  This proves the first part of \ref{stable ideals a}.  For the second part of \ref{stable ideals a}, if
$f\in K[G]^G$, then $f(g)=f(1.g^{-1})=g^{-1}(f)(1)=f(1)\in K$ for all $g\in G(R)$. So $f\in K$.
	
	To prove \ref{stable ideals b}, 
let $J\trianglelefteq K[G]$	be an ideal, and for any $K$-algebra $R$ write
		$$Z(R):=\{h\in G(R)|\ h(J\otimes_K R)\subseteq J\otimes_K R\}.$$
We claim that $Z$ is closed in $G$, and hence so is 
$Z \cap Z^{-1} \subseteq G$. But 
		$$(Z \cap Z^{-1})(R)=\{h\in G(R)|\ h(J\otimes_K R)=J\otimes_K R\}$$
for any $K$-algebra $R$; so $Z \cap Z^{-1}$ is a 
subgroup
 of $G$.  Thus if $J$ is $H_i$-stable for all $i$ then $J$ is also $H$-stable, by definition of $H$; and \ref{stable ideals b} would then follow.

To prove the claim, let $(f_i)_{i\in \I}$ be a $K$-basis of $K[G]$ such that $\I=\I_1\cup \I_2$ is the disjoint union of $\I_1$ and $\I_2$ and $(f_i)_{i\in \I_1}$ is a basis of $J$. Let $f\in J$ and write $\rho(f)=\sum f_i\otimes g_i\in K[G]\otimes_K K[G]$. Then $h(f\otimes 1)=\sum f_i\otimes h(g_i)$ for $h\in G(R)=\Hom_K(K[G],K)$. Therefore $h(f\otimes 1)\in J\otimes_K R$ if and only if $h(g_i)=0$ for $i\in \I_2$. This proves the claim, and part~\ref{stable ideals b}.
	
	Part \ref{stable ideals c} is clear from Lemma \ref{lemma:invariant ideal}\ref{stable cond b}, since $G$ is the only $G$-stable closed subscheme of $G$. 	
\end{proof}

\begin{lem}\label{lem left right}
Let $N$ be a normal closed subgroup of $G$. We consider two right actions of $N$ on $G$: 
	\begin{enumerate}[(i)]
		\item \label{torsor action} $G\times N \to G$ given by the formula $(g,n) \mapsto g\!\cdot\!n$
		\item \label{other action} $G\times N \to G$ given by the formula $(g,n) \mapsto n^{-1}\!\cdot\!g$.
	\end{enumerate}
	Let $\rho_1 \colon K[G] \to K[G]\otimes K[N]$ and $\rho_2 \colon K[G] \to K[G]\otimes K[N]$ denote the homomorphisms corresponding to (\ref{torsor action}) and (\ref{other action}). Then the following hold.
\renewcommand{\theenumi}{(\alph{enumi})}
\renewcommand{\labelenumi}{(\alph{enumi})}
	\begin{enumerate}
		\item \label{lem same invariants} The two actions give rise to the same ring of invariants $K[G]^N \subseteq K[G]$:
		\[\{f \in K[G] \mid \rho_1(f)=f\otimes 1\} = \{f \in K[G] \mid \rho_2(f)=f\otimes 1\}. \]
		\item \label{lem same stability} Let $J \trianglelefteq K[G]$ be an ideal. Then $J$ is $N$-stable with respect to the action (\ref{torsor action}) if and only if $J$ is $N$-stable with respect to the action (\ref{other action}), that is, $\rho_1(J) \subseteq J \otimes K[N]$ if and only if $\rho_2(J) \subseteq J \otimes K[N]$. 
	\end{enumerate}
\end{lem}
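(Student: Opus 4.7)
The plan is to deduce both parts from the observation that, when $N$ is normal in $G$, the two $N$-orbits of any $R$-point of $G$ coincide. The key identity is the functorial equality
\[ n^{-1}\cdot g \;=\; g\cdot (g^{-1}n^{-1}g), \]
which makes sense for any $K$-algebra $R$ and any $g\in G(R),\ n\in N(R)$. Normality of $N$ in $G$ means precisely that the conjugation morphism $G\times N\to G,\ (g,n)\mapsto g^{-1}ng$, factors through $N$; hence $g^{-1}n^{-1}g\in N(R)$, and the displayed identity expresses an (ii)-translate as an (i)-translate.

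For part \ref{lem same invariants}, I would unwind the definition of functorial invariance from the beginning of Section~\ref{subsec Gspaces}. An element $f\in K[G]$ satisfies $\rho_1(f)=f\otimes 1$ if and only if $f(x\cdot h)=f(x)$ for every $K$-algebra $R$, $x\in G(R)$, and $h\in N(R)$; and it satisfies $\rho_2(f)=f\otimes 1$ if and only if $f(h^{-1}\cdot x)=f(x)$ under the same quantifiers. Given an (i)-invariant $f$ and arbitrary $x\in G(R)$, $h\in N(R)$, the identity above yields
\[ f(h^{-1}x) \;=\; f\bigl(x\cdot(x^{-1}h^{-1}x)\bigr) \;=\; f(x), \]
since $x^{-1}h^{-1}x\in N(R)$ by normality. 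Thus $f$ is (ii)-invariant. The converse is symmetric, applying the identity $g\cdot n=(gng^{-1})\cdot g$.

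For part \ref{lem same stability}, I would pass to the equivalent geometric formulation by Lemma~\ref{lemma:invariant ideal}\ref{stable cond b}: stability of $J$ under (i), resp.\ (ii), amounts to $Z(R)\cdot N(R)\subseteq Z(R)$, resp.\ $N(R)^{-1}\cdot Z(R)\subseteq Z(R)$, where $Z=\Spec(K[G]/J)$. Assuming the first, for any $z\in Z(R)$ and $n\in N(R)$ the identity gives
\[ n^{-1}z \;=\; z\cdot(z^{-1}n^{-1}z), \]
and $z^{-1}n^{-1}z\in N(R)$ by normality, so $n^{-1}z\in Z(R)\cdot N(R)\subseteq Z(R)$. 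Hence $Z$ is (ii)-stable, and by Lemma~\ref{lemma:invariant ideal}\ref{stable cond c}, $\rho_2(J)\subseteq J\otimes K[N]$. The reverse implication is symmetric.

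There is no serious obstacle here: the content is just translating the set-theoretic fact that right and left $N$-cosets coincide for a normal subgroup into functorial language. The only point requiring a little care is to insist on the functorial formulation of normality (i.e.\ that the conjugation morphism $G\times N\to G$ factors through $N$), so that the manipulations above are valid uniformly in the $K$-algebra $R$.
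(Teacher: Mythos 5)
Your proposal is correct and follows essentially the same route as the paper: both reduce each part to a pointwise statement about $R$-points (invariance of $f$, resp.\ stability of the closed subscheme $Z$ via Lemma~\ref{lemma:invariant ideal}) and then use the conjugation identity $n^{-1}g = g\cdot(g^{-1}n^{-1}g)$ with $g^{-1}n^{-1}g \in N(R)$ by normality, together with its symmetric counterpart for the converse. The only cosmetic difference is that the paper quantifies over $g \in G(S)$ for $R$-algebras $S$ in part (a), which is equivalent to your formulation since one quantifies over all $K$-algebras anyway.
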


\begin{proof}
	To prove \ref{lem same invariants} we have to show that for $f\in K[G]$ we have $f(gn)=f(g)$ for all $n\in N(R)$ and $g\in G(S)$ for all $K$-algebras $R$ and $R$-algebras $S$ if and only if $f(n^{-1}g)=f(g)$ for all $n\in N(R)$ and $g\in G(S)$ for all $K$-algebras $R$ and $R$-algebras $S$.
	
	First assume that $f(gn)=f(g)$. Since $g^{-1}n^{-1}g\in N(S)$ we find $f(n^{-1}g)=f(gg^{-1}n^{-1}g)=f(g)$.
	Conversely, if  $f(n^{-1}g)=f(g)$, then $f(gn)=f(gng^{-1}g)=f(g)$ because $gng^{-1}\in N(S)$.
	
	To prove \ref{lem same stability}, according to Lemma \ref{lemma:invariant ideal}, we have to show that for a closed subscheme $Z$ of $G$ we have
	$Z(R)N(R)\subseteq Z(R)$ for all $K$-algebras $R$ if and only if $N(R)Z(R)\subseteq Z(R)$ for all $K$-algebras $R$.
	
	First assume that $Z(R)N(R)\subseteq Z(R)$. For $z\in Z(R)$ and $n\in N(R)$ the element $z^{-1}nz$ belongs to $N(R)$ and therefore
	$nz=zz^{-1}nz\in Z(R)$.
	
	Conversely, assume that $N(R)Z(R)\subseteq Z(R)$, then
	$zn=znz^{-1}z\in Z(R)$ because $znz^{-1}\in N(R)$.
\end{proof}
 
\subsection{Quotients}\label{quotients}

In order to discuss the quotient of a $G$-space $X$ by the action of $G$, we recall
some basic facts about sheaves in the faithfully flat topology. Let $K$ be a field and let $\F$ be a functor from the category of $K$-algebras to the category of sets. Then $\F$ is a \textit{sheaf} if 
$\F(A \times B) = \F(A) \times \F(B)$ for every pair of $K$-algebras $A,B$, and 
if for every faithfully flat homomorphism $R \to S$ of $K$-algebras the sequence $\F(R)\to \F(S)\rightrightarrows \F(S\otimes_R S)$ is exact (i.e., an equalizer of sets). \textit{Morphisms of sheaves} are defined as morphisms of functors. Every quasi-projective scheme over $K$ (in particular, every affine $K$-scheme) is a sheaf.

If $\F$ is a functor from the category of $K$-algebras to the category of sets, then there exists a sheaf $\tilde \F$ and a morphism $\iota \colon \F \to \tilde \F$ that is universal among morphisms from $\F$ to sheaves; i.e., for every morphism $\phi \colon \F \to \F'$ from $\F$ to a sheaf $\F'$, there exists a unique morphism $\tilde \phi \colon \tilde \F \to \F'$ such that $\phi=\tilde \phi \circ \iota$. See for example \cite[III, \S 1, Theorem 1.8]{DG} for a proof. The sheaf $\tilde \F$ is called the \textit{sheafification of $\F$}. If $\phi \colon \F \to \F'$ is a morphism of functors, then the universal property of $\tilde \F$ applied to the morphism $\F \to \F' \to \tilde \F'$ yields a morphism $\tilde \phi \colon \tilde \F \to \tilde \F'$ that is called the \textit{sheafification of $\phi$}. 

Let $G$ be an affine group scheme of finite type over $K$ and let $X$ be an affine $G$-space over $K$. 
We view $X$ as a sheaf.
A morphism $\pi \colon X \to Y$ of sheaves is \textit{constant on $G$-orbits} if the following diagram commutes:
\[\xymatrix{ X\times G  \ar@{->}[dr]_{\pi\circ \operatorname{pr}_1} \ar@{->}[rr]^\alpha   && X \ar@{->}[dl]^{\pi} \\
            & Y  & } \]

 We write $X\qq G$ for the functor of $G$-orbits on $X$; i.e., $(X\qq G)(R)=X(R)/G(R)$ for every $K$-algebra $R$. Let $\operatorname{pr} \colon X\to X\qq G$ be the canonical projection. Set $Y=\widetilde{X\qq G}$ and let $\pi=\tilde{\operatorname{pr}} \colon X\to Y$ be the sheafification of $\operatorname{pr}$. 
 It is easy to see that $Y$  together with $\pi\colon X\to Y$ is the \textit{universal quotient in the category of sheaves}, that is, $\pi$ is constant on $G$-orbits and for every other morphism $\phi \colon X\to Y'$ of sheaves that is constant on $G$-orbits, there exists a unique morphism $\psi \colon Y \to Y'$ such that $\psi \circ \pi=\phi$.  In the situation where $\widetilde{X\qq G}$ is represented by a quasi-projective $F$-scheme, we
will write $X/G$ for that scheme.  A case of particular interest in where $X/G$ is an affine scheme; in that situation we have the following result:

\begin{prop}\label{quotient}
Let $G$ be an affine group scheme of finite type over $K$ and let $X$ be an affine $G$-space over $K$. If the quotient $$Y=\widetilde{X\qq G}$$ is an affine scheme, then $K[Y]\cong K[X]^G$, and the projection $\pi \colon X\to Y$ is given by the inclusion $K[Y] \cong K[X]^G \subseteq K[X]$.
\end{prop}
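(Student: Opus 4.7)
The plan is to show that both $Y$ and $Z:=\Spec(K[X]^G)$ are ``universal quotients'' of $X$ in the category of sheaves, and hence are canonically isomorphic via maps that will identify $\pi^*$ with the inclusion $K[X]^G \hookrightarrow K[X]$.

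First I would let $\iota\colon K[X]^G\hookrightarrow K[X]$ denote the inclusion and let $\phi\colon X\to Z$ be the corresponding morphism of affine schemes. The first step is to verify that $\phi$ is constant on $G$-orbits in the sense defined before the proposition, i.e., that $\phi\circ\alpha=\phi\circ\pr_1$ as morphisms $X\times G\to Z$. Dually this amounts to the equality $\rho\circ\iota=(f\mapsto f\otimes 1)\circ\iota$ of maps $K[X]^G\to K[X]\otimes_K K[G]$, which is precisely the characterization $\rho(f)=f\otimes 1$ for $f\in K[X]^G$ noted in the paragraph preceding Lemma~\ref{lemma:invariant ideal}. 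Since $Z$ is an affine scheme, and hence a sheaf, the universal property of $Y=\widetilde{X\qq G}$ yields a unique morphism of sheaves $\psi\colon Y\to Z$ with $\psi\circ\pi=\phi$.

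Next I would construct a morphism in the opposite direction. The morphism $\pi\colon X\to Y$ is itself constant on $G$-orbits, and by assumption $Y$ is affine, so the same computation as above — applied to $\pi^*\colon K[Y]\to K[X]$ instead of $\iota$ — shows $\rho(\pi^*(f))=\pi^*(f)\otimes 1$ for every $f\in K[Y]$. Therefore $\pi^*$ factors through $K[X]^G$, giving a unique $K$-algebra homomorphism $\bar\pi^*\colon K[Y]\to K[X]^G$ with $\iota\circ\bar\pi^*=\pi^*$. Dually, $\bar\pi^*$ corresponds to a morphism $q\colon Z\to Y$ satisfying $q\circ\phi=\pi$.

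Finally I would check that $q$ and $\psi$ are mutually inverse, which then gives the desired ring isomorphism $\bar\pi^*\colon K[Y]\stackrel{\sim}{\to} K[X]^G$ under which $\pi^*$ becomes $\iota$. Chasing $\psi\circ q\circ\phi=\psi\circ\pi=\phi$ on coordinate rings yields $\iota\circ(q^*\circ\psi^*)=\iota$; since $\iota$ is injective, this forces $q^*\circ\psi^*=\id_{K[X]^G}$, hence $\psi\circ q=\id_Z$. In the other direction, $q\circ\psi\circ\pi=q\circ\phi=\pi$, and the uniqueness clause in the universal property of the sheafification $\pi$ then gives $q\circ\psi=\id_Y$. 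The only point that needs any care is the functorial verification that $\phi$ really is constant on $G$-orbits in the sheaf-theoretic sense (not merely on geometric points), but this falls out directly from the compatibility of $\rho$ with the action $\alpha$, so no serious obstacle arises; the proposition is essentially a formal consequence of the universal property of the sheafification combined with the co-action description of $G$-invariants.
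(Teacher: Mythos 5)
Your proposal is correct and follows essentially the same route as the paper: both proofs hinge on verifying that $\Spec(K[X]^G)$ receives a morphism from $X$ that is constant on $G$-orbits (dually, $\rho(f)=f\otimes 1$ for $f\in K[X]^G$) and then invoking the universal property of the sheaf quotient $Y$ together with the fact that $Y$ is affine. The paper phrases the conclusion as ``both objects are universal quotients in the category of affine schemes, hence isomorphic,'' whereas you unwind this into the explicit construction of the two mutually inverse morphisms; this is the same argument in slightly more detail.
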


\begin{proof}
Let $\rho \colon K[X]\to K[X]\otimes_K K[G]$ be the homomorphism corresponding to the action $\alpha: X\times G \to X$; i.e., $\alpha^*=\rho$. 

Set $Z=\Spec(K[X]^G)$ and let $\pi_Z \colon X \to Z$ be the morphism defined by the inclusion $\pi_Z^* \colon K[X]^G\hookrightarrow K[X]$. By definition of $K[X]^G$, the composition $\rho\circ \pi_Z^*$ maps $f$ to $f\otimes 1$ for every $f \in K[X]^G$. Hence $\pi_Z \circ \alpha \colon X\times G \to Z$ equals $\pi_Z$ applied to the first factor, and $\pi_Z$ is thus constant on $G$-orbits. It is easy to check that $Z$ is the universal quotient of $X$ by $G$ in the category of affine schemes. 

By assumption, $Y$ is an affine scheme and it is the universal quotient in the category of sheaves, therefore it is also the universal quotient in the category of affine schemes. We conclude $Y \cong Z$ and thus $K[Y]\cong K[Z]=K[X]^G$. 
\end{proof}

\begin{rem}
For every affine $G$-space $X$, there exists a  universal quotient of $X$ mod $G$ in the category of affine schemes, namely $\Spec(K[X]^G)$.  However, this does not imply that this is a quotient with ``good properties'', since the category of affine schemes is in general too small for ``good quotients''. For example the multiplicative group $G=\mathbb{G}_m$ acts on the affine line $X=\mathbb{A}^1$, but $K[X]^G=K$, so the affine quotient $\Spec(K[X]^G)$ is trivial and does not contain any information on the $G$-orbits. 
\end{rem}

A non-empty affine $G$-space $X$ is called a \textit{$G$-torsor} if for every $K$-algebra $R$ and all $x,y \in X(R)$ there exists a unique $g \in G(R)$ with $y=x.g$.  Equivalently, the condition is that the morphism $(\alpha,\pr_1):X \times G  \to X \times X$ is an isomorphism, where $\pr_1$ is the projection onto the first component.
A torsor is called \textit{trivial} if it is isomorphic to the $G$-torsor $G$ (where the $G$-action on $G$ is defined by right multiplication). A torsor $X$ is trivial if and only if $X(K)\neq \varnothing$.  Over an algebraically closed field, every torsor is trivial; and so for general $K$, there is a finite extension $L/K$ such that the base change $X_L$ is trivial as a $G_L$-torsor.   

The next proposition assures that certain quotients of torsors are well-behaved.  We first recall the basics of flat descent theory for schemes, in the special case of field extensions.  Let $L/K$ be a field extension.  Then {\em descent data} for an $L$-scheme $Z$ consists of an isomorphism $\phi:Z \times_K L \to L \times_K Z$ such that $\phi_2=\phi_1 \circ \phi_3$, where $\phi_1 = \id_L \times \phi:L \times_K Z \times_K L \to L \times_K L \times_K Z$ and similarly for $\phi_2:Z \times_K L \times_K L \to L \times_K L \times_K Z$ and $\phi_3: Z \times_K L \times_K L\to L  \times_K Z\times_K L$ (where $\id_L$ is inserted into the second or third factor respectively).  A $K$-scheme $Y$ induces an $L$-scheme $Z := Y \times_K L$ together with descent data; and conversely an $L$-scheme $Z$ together with descent data is induced by a $K$-scheme $Y$ that is unique up to isomorphism.  See \cite[VIII]{SGA1} and \cite[I.2]{milneEtCoh} for more details.  Similarly, a sheaf over $K$ induces a sheaf over $L$ together with descent data in the analogous sense.
Concerning uniqueness, the sheaf axiom implies that a sheaf $\F$ over $K$ is determined by its base change to 
$L$ together with the induced descent data, since for any $K$-algebra $A$, the set
$\F(A)$ is the equalizer of $\F(A \otimes_K L) \rightrightarrows \F(A \otimes_K L \otimes_K L)$ and similarly for morphisms.  See also \cite[III, Section~3.4, Proposition~6.2]{DG}.

\begin{prop}\label{prop X/N}
Let $X$ be a $G$-torsor for an affine group scheme $G$ of finite type over $K$.
\renewcommand{\theenumi}{(\alph{enumi})}
\renewcommand{\labelenumi}{(\alph{enumi})}
\begin{enumerate}
\item \label{X/H part}
If $H$ is a closed subgroup of $G$, then $X/H$ exists; i.e., $\widetilde{X\qq H}$ is represented by a quasi-projective scheme.
\item \label{normal part}
Let $N\trianglelefteq G$ be a closed normal subgroup. Then 
$X/N$ is isomorphic to the affine scheme $\Spec(K[X]^N)$, and it is a $(G/N)$-torsor.  In addition, the homomorphism $\rho_X \colon K[X]\to K[X] \otimes K[G]$ corresponding to the $G$-action on $X$ restricts to a homomorphism $\rho_Y \colon K[X]^N \to K[X]^N \otimes K[G]^N$ and this homomorphism corresponds to the $(G/N)$-action on $Y$. Furthermore, $K[X]$ is faithfully flat over $K[X]^N$. 
\end{enumerate}
\end{prop}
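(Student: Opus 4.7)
The strategy is faithfully flat descent: since $X$ is a $G$-torsor, there exists a faithfully flat $K$-algebra $A$ over which $X$ trivializes, so $X_A \cong G_A$ as $G_A$-torsors. By the functoriality of the quotient and sheafification constructions, the canonical descent datum on $X_A$ (coming from $X$ being defined over $K$) induces descent data on any sheaf quotient constructed from $X_A$, which can then be descended back to $K$ provided the quotient is represented by a scheme with suitable properties.

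For part (a), I would invoke the classical theorem of Chevalley on quotients of affine algebraic groups (see, e.g., \cite[III, \S 3]{DG}) to get that $G/H$ exists as a quasi-projective $K$-scheme; hence $G_A/H_A = (G/H)_A$ is quasi-projective over $A$. Under the trivialization $X_A \cong G_A$ this $A$-scheme represents the sheafification $\widetilde{X_A \qq H_A}$. Faithfully flat descent for quasi-projective schemes then produces a quasi-projective $K$-scheme representing $\widetilde{X \qq H}$, which is $X/H$.

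For part (b), when $N$ is normal the quotient $G/N$ is itself an affine group scheme (again by Chevalley), so $G_A/N_A \cong X_A/N_A$ is affine. Since affineness descends under faithfully flat descent, $X/N$ is represented by an affine $K$-scheme, which by Proposition~\ref{quotient} must be $\Spec(K[X]^N)$. The $G$-action on $X$ descends, using the normality of $N$, to a morphism $(X/N) \times (G/N) \to X/N$; on coordinate rings this is exactly the restriction $\rho_Y$ of $\rho_X$, by the commutativity of the natural quotient diagram combined with the identification $K[G]^N = K[G/N]$. To verify that $X/N$ is a $(G/N)$-torsor, it suffices to check that the morphism $(X/N) \times (G/N) \to (X/N) \times_K (X/N)$ given by $(y,\bar g) \mapsto (y, y.\bar g)$ is an isomorphism; this reduces via base change to $A$ to the obvious trivial-torsor statement for $G_A/N_A$ acting on itself. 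The faithful flatness of $K[X]$ over $K[X]^N$ similarly follows from that of the inclusion $A[G]^{N_A} \hookrightarrow A[G]$, which corresponds to the faithfully flat quotient morphism $G_A \to G_A/N_A$.

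The main technical obstacle is verifying carefully that the descent data on the various $A$-schemes are compatible with the torsor and action structures being considered, so that the descended $K$-schemes inherit the desired structure; this is a functoriality check but requires some bookkeeping, particularly in confirming that the $(G/N)$-torsor structure obtained from descent agrees with the one coming directly from the $G$-action on $X$.
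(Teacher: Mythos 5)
Your proposal is correct and follows essentially the same route as the paper: trivialize the torsor over an extension of $K$, invoke the existence and quasi-projectivity (resp.\ affineness, for normal $N$) of $G/H$ and $G/N$ from Demazure--Gabriel and Raynaud, transfer these properties to the sheaf quotient of $X$, and descend back to $K$, with the coordinate-ring identifications then following from Proposition~\ref{quotient}. The one point to tighten is that for effectivity of descent of quasi-projective schemes in part (a) you should trivialize over a \emph{finite} field extension $L/K$ (which exists since $X$ is nonempty and of finite type over a field) rather than an arbitrary faithfully flat $K$-algebra $A$, which is exactly what the paper does via \cite[VIII, Corollaire~7.7]{SGA1}.
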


\begin{proof}
For part~\ref{X/H part}, the sheaf $\widetilde{G\qq H}$ is represented by a $K$-scheme (see \cite[III, Section~3.5, Th\'eor\`eme~5.4]{DG}), and this scheme is quasi-projective by \cite[Corollaire VI.2.6]{Ray}.  That is, the quotient $G/H$ exists.
The $G$-torsor $X$ becomes trivial over some finite field extension $L/K$, and this induces an isomorphism of sheaves between $(\widetilde{X\qq H})_L$ and $(G/H)_L$.  The extension of constants from $K$ to $L$ induces descent data for the sheaf
$(\widetilde{X\qq H})_L$.  Since 
$(\widetilde{X\qq H})_L$ is isomorphic to a quasi-projective $L$-scheme (viz.\ $(G/H)_L$), it follows from faithfully flat descent for schemes (\cite[VIII, Corollaire~7.7]{SGA1}) that there is a quasi-projective $K$-scheme $Y$ that induces this $L$-scheme together with the above descent data.  
But $Y$ and 
$\widetilde{X\qq H}$ are then both sheaves that induce $Y_L$ together with the above descent data.  
So by the uniqueness assertion before the statement of the proposition,
it follows that $Y$ and 
$\widetilde{X\qq H}$ are isomorphic as sheaves.  That is, $Y = X/H$.  This proves part~\ref{X/H part}. 

Next we turn to part~\ref{normal part}. 
Since $N$ is normal, the quotient $G/N$ is an affine $K$-group scheme of finite type (see \cite[III, Section~3.5, Th\'eor\`eme~5.6]{DG}); hence 
so is $(G/N)_L$.  Thus in the argument above (with $H=N$), the descended scheme $Y = X/N$ is also affine, by \cite[VIII, Th\'eor\`eme~2.1]{SGA1} (or \cite[I.2, Theorem~2.23]{milneEtCoh}).

For every $K$-algebra $R$,
the image of an element $x \in X(R)$ in $X(R)/N(R)$ will be denoted by $[x]$, and the image of $g \in G(R)$ in $G(R)/N(R)$ will be denoted by $\bar g$.  Thus 
\[ X(R)/N(R) \times G(R)/N(R) \to X(R)/N(R), \ ([x],\bar g) \mapsto [x].\bar g:=[x.g] \] 
is a well-defined group action for every $K$-algebra $R$ and it defines a morphism of functors $(X\qq N) \times (G\qq N)  \to (X\qq N)$. The sheafification $X/N \times (G/N) \to X/N$ of this morphism defines a group action of $G/N$ on $X/N$. For every $K$-algebra $R$, and for all $[x], [y] \in (X\qq N)(R)$, there exists a unique $\bar g \in (G\qq N)(R)$ with $[x]=[y].\bar g$. Therefore, 
\[ (X\qq N)(R) \times (G\qq N)(R) \to(X\qq N)(R) \times (X\qq N)(R), \ ([x],\bar g) \mapsto ([x], [x].\bar g) \] is bijective for every $K$-algebra $R$. Hence the sheafification $X/N \times (G/N) \to X/N \times X/N$ is an isomorphism, and we conclude that $X/N$ is a $(G/N)$-torsor. 

Proposition \ref{quotient} implies that $K[X/N]=K[X]^N$ (i.e., $X/N \cong \Spec(K[X]^N)$) and that the quotient map $\pi\colon X\to X/N$ is induced from the inclusion $K[X]^N \subseteq K[X]$. Similarly, the quotient map $\pi_G \colon G \to G/N$ is induced from the inclusion $K[G/N]=K[G]^N \subseteq K[G]$. By construction of the $(G/N)$-action on $X/N$, the following diagram commutes:
\[
\xymatrix{
X  \times G \ar[rr] \ar[d]^{\pi\times \pi_G} &&  X \ar[d]^{\pi}\\
 (X/N) \times (G/N) \ar[rr] &&X/N  \\ } 
\]
and we conclude that the homomorphism $\rho_X \colon K[X]\to K[X] \otimes K[G]$ corresponding to the $G$-action on $X$ restricts to a homomorphism $\rho_{X/N} \colon K[X]^N \to K[X]^N \otimes K[G]^N$ corresponding to the $(G/N)$-action on $X/N$.

Finally, we show that $K[X]$ is faithfully flat over $K[X]^N$. Fix a finite field extension $L/K$ such that $X(L)$ is non-empty. Let $x \in X(L)$ and define $y=\pi(x) \in (X/N)(L)$. Then $g \mapsto x.g$ defines an isomorphism $G_L\to X_L$, and $\bar g \mapsto y.\bar g = \pi(x.g)$ defines an isomorphism $(G/N)_L \to (X/N)_L$, in each case between trivial torsors. The isomorphism $(L\otimes_K K[X]) \to (L\otimes_K K[G])$ corresponding to $G_L \to X_L$ restricts to the isomorphism $(L\otimes_K K[X]^N)\to (L\otimes_K K[G]^N)$ corresponding to $(G/N)_L \to (X/N)_L$.  By \cite[III.3, Proposition 2.5]{DG}, $K[G]$ is faithfully flat over $K[G]^N$.  So
$L\otimes_K K[G]$ is faithfully flat over $L\otimes_K K[G]^N$, and therefore $L\otimes_K K[X]$ is faithfully flat over $L\otimes_K K[X]^N$. Using that $L/K$ is faithfully flat, we conclude that $K[X]/K[X]^N$ is faithfully flat. 
\end{proof}

The next result is used in Section~\ref{sec: Patch Embed}.

\begin{lem} \label{lem intersection invariants}
	Let $G$ be a smooth affine group scheme of finite type over a field $K$ and let $N$ be a normal closed subgroup. Let $X$ be a $G$-torsor, and let $I \trianglelefteq K[X]$ be an $N$-stable, non-zero ideal. Then $I \cap K[X]^N \neq 0$. 
\end{lem}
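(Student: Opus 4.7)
The plan is to establish the stronger statement that any $N$-stable ideal $I \trianglelefteq K[X]$ is generated by its $N$-invariants, i.e., $I = (I \cap K[X]^N) \cdot K[X]$; the lemma then follows immediately, since $I \neq 0$ forces $I \cap K[X]^N \neq 0$.

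Setting $A := K[X]^N$ and $B := K[X]$, the main input is that $X \to X/N = \Spec(A)$ is an $N$-torsor, which is inherited from $X$ being a $G$-torsor by restricting the action. This provides on the one hand that $B$ is faithfully flat over $A$ (Proposition~\ref{prop X/N}\ref{normal part}), and on the other hand an isomorphism
\[
\Psi \colon B \otimes_A B \xrightarrow{\ \sim\ } B \otimes_K K[N], \qquad b_1 \otimes b_2 \mapsto (b_1 \otimes 1)\cdot \rho(b_2),
\]
where $\rho \colon B \to B \otimes_K K[N]$ is the $N$-co-action (this is just the dual of the torsor isomorphism $X \times N \to X \times_{X/N} X$, $(x,n) \mapsto (x, x.n)$).

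Under $\Psi$, the sub-$B$-module $I \otimes_A B \subseteq B \otimes_A B$ corresponds to $I \otimes_K K[N] \subseteq B \otimes_K K[N]$, while $B \otimes_A I$ corresponds to the sub-$B$-module generated by $\rho(I)$. The $N$-stability hypothesis $\rho(I) \subseteq I \otimes_K K[N]$ then, together with the analogous statement for the inverse $N$-action (which also preserves $I$), forces $I \otimes_A B = B \otimes_A I$ inside $B \otimes_A B$. This is precisely a descent datum on $I$ relative to the faithfully flat map $A \to B$, so faithfully flat descent for submodules yields an ideal $J \trianglelefteq A$ with $I = JB$; faithful flatness then identifies $J$ with $I \cap A$ (cf.~\cite[Thm.~7.5]{Mats}). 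Since $I \neq 0$, we deduce $I \cap K[X]^N = J \neq 0$, as required.

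The main technical obstacle will be the careful translation between $N$-stability of $I$ and the descent datum $I \otimes_A B = B \otimes_A I$: the interaction of $\Psi$ with the two $B$-module structures on $B \otimes_A B$, and the use of the inverse action of $N$ to obtain the reverse inclusion, are both points where the argument must be handled with care. Once this dictionary is pinned down, the descent conclusion is entirely standard.
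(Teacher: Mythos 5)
Your proposal is correct, but it takes a genuinely different route from the paper. The paper argues geometrically: after base change to $\bar{K}$ one may take $X=G$, and then the openness of the quotient morphism $\pi\colon G\to G/N$ together with the fact that its fibres are exactly the $N$-orbits shows that $\pi(V(I))$ is not dense in $G/N$; this produces a nonzero element of $\sqrt{I}\cap K[G]^N$, and reducedness of $G$ upgrades a power of it to a nonzero element of $I\cap K[G]^N$. Your route stays entirely algebraic: since $X/N$ is a $(G/N)$-torsor and $N(R)\to G(R)\to (G/N)(R)$ is exact, the map $X\times N\to X\times_{X/N}X$ is indeed an isomorphism, so $\Psi$ is well defined; it carries $I\otimes_A B$ to $I\otimes_K K[N]$ and $B\otimes_A I$ to the ideal $\langle\rho(I)\rangle$, and the two points you flag do go through --- $N$-stability gives $\langle\rho(I)\rangle\subseteq I\otimes_K K[N]$, while the reverse inclusion follows from the antipode identity $\sum\rho(i_{(0)})\cdot(1\otimes S(i_{(1)}))=i\otimes 1$ (equivalently, by conjugating the swap automorphism of $B\otimes_A B$ through $\Psi$, which turns one inclusion into the other). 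Descent of submodules along the faithfully flat map $A\to B$ (Proposition~\ref{prop X/N}\ref{normal part}) and \cite[Thm.~7.5]{Mats} then finish the argument. What your approach buys is a strictly stronger conclusion --- every $N$-stable ideal is extended from $K[X]^N$, in the spirit of Lemma~\ref{lem ideal cor} --- with no use of smoothness or reducedness of $G$ and no passage to $\bar{K}$; the cost is the Hopf-algebraic bookkeeping you mention and an appeal to faithfully flat descent for ideals, which is standard but lies outside the toolkit the paper's proof invokes.
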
 

\begin{proof}
	First note that by flatness of $\bar K$ over $K$, we may assume that $K$ is algebraically closed. Hence $X$ is a trivial $G$-torsor and we may assume $X=G$. Let $I \trianglelefteq K[G]$ be an $N$-stable, non-zero ideal. We need to show that $I \cap K[G]^N \neq 0$. Let $Z \subsetneq G$ be the closed subset defined by $I$. 
By Proposition~\ref{quotient},	
the quotient morphism $\pi \colon G \to G/N$ corresponds to the inclusion $K[G]^N \subseteq K[G]$ on the level of coordinate rings. Furthermore, $\pi$ is open (see Theorem 5.5.5 in \cite{Springer} and its proof), so $\pi(G\smallsetminus Z)$ is an open and non-empty subset of $G/N$. Also note that $Z$ is a union of $N$-orbits, since $I$ is $N$-stable. However, the fibers of $\pi$ are the $N$-orbits in $G$ (\cite[Cor. 5.5.4]{Springer}) and so $\pi(Z)\cap \pi(G\smallsetminus Z)=\varnothing$. Therefore, $\pi(Z) \subseteq (G/N)\smallsetminus \pi(G\smallsetminus Z)$ is not dense in $G/N$ and hence $\sqrt{I} \cap K[G]^N\neq 0$.  Choose $0 \ne f \in \sqrt{I} \cap K[G]^N$.  Then some power $f^n$ lies in $I \cap K[G]^N$.  
But $G$ is reduced, so $f^n \ne 0$ and thus $I \cap K[G]^N \neq 0$. 
\end{proof}

\subsection{Induced torsors}\label{inducedtorsors}

Let $H$ be a closed subgroup of $G$ and let $Y$ be an $H$-torsor. There is a natural way to construct a $G$-torsor $$X=\ind$$ from $Y$ (\cite[III, \S 4, 3.2]{DG}) which we recall in the proof of Proposition \ref{prop: universal property of ind} below. This follows directly from the identification of isomorphism classes of torsors with equivalence classes of $1$-cocycles, and the fact that the inclusion map $H \hookrightarrow G$ induces a map $H^1(K,H) \to H^1(K,G)$.   
Alternatively, the induced torsor $\ind$ can be characterized by a universal property:

\begin{prop} \label{prop: universal property of ind}
	Let $H$ be a closed subgroup of $G$ and let $Y$ be an $H$-torsor. Then there exists a $G$-torsor $\ind$ together with an $H$-equivariant morphism $Y\to\ind$ such that for every $H$-equivariant morphism $Y\to Z$ from $Y$ to some affine $G$-space $Z$, there exists a unique $G$-equivariant morphism $\ind\to Z$ making the diagram
	\begin{equation} \label{eq: diagram for ind}
	\xymatrix{ Y  \ar@{->}[dr] \ar@{->}[rr]   & & \ind \ar@{..>}[dl] \\
		& Z  & }
	\end{equation}
	commutative.  This $G$-torsor is unique up to a unique isomorphism, and it is given by the quotient
	$(Y \times G)/H$, where $Y \times G$ is an $H$-space under the action given by 
	$(y,g).h=(y.h, h^{-1}g)$ for every $K$-algebra $R$, and all $y \in Y(R)$, $g \in G(R)$, and $h \in H(R)$.	
\end{prop}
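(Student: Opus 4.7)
My plan is to construct $X$ explicitly as $(Y\times G)/H$, equipped with the $G$-action inherited from right multiplication on the second factor of $Y\times G$, and then verify the universal property by a direct diagram chase. The formula $\ind=(Y\times G)/H$ is already supplied in the statement, so the real content is to exhibit the quotient as a $G$-torsor and to check the universal property.

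The key observation for existence is to combine the given $H$-action with the right $G$-action $(y,g).g'=(y,gg')$ on the second factor. A short calculation shows these commute, so together they define a right action of $H\times G$ on $Y\times G$; moreover $Y\times G$ becomes an $(H\times G)$-torsor, since the unique $h$ solving $y_1.h=y_2$ (from the $H$-torsor structure on $Y$) then determines $g':=g_1^{-1}hg_2$ uniquely from the second coordinate. Since $H\times\{1\}$ is closed normal in $H\times G$ with quotient canonically $G$, Proposition~\ref{prop X/N}\ref{normal part} applies to give that $X:=(Y\times G)/H$ exists as an affine scheme with coordinate ring $(K[Y]\otimes_K K[G])^H$, and that it is a $G$-torsor via the residual second-factor action.

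I would then define the canonical map $\iota\colon Y\to X$ by $y\mapsto [(y,1)]$ on $R$-points; its $H$-equivariance is immediate from the relation $(y,h).h=(y.h,1)$ in the $H$-orbit, combined with the fact that the $H$-action on $X$ is the restriction of its $G$-action. For the universal property, given $\phi\colon Y\to Z$ as in the statement, I would define $\tilde\phi\colon Y\times G\to Z$ by $(y,g)\mapsto\phi(y).g$. The $H$-equivariance of $\phi$ makes $\tilde\phi$ constant on $H$-orbits (a direct check using $\phi(y.h).(h^{-1}g)=\phi(y).g$), and $\tilde\phi$ is clearly $G$-equivariant for the second-factor action, so the universal property of the sheaf quotient (Section~\ref{quotients}) produces a unique $G$-equivariant $\psi\colon X\to Z$ with $\psi\circ\iota=\phi$. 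Uniqueness of $X$ up to a unique $G$-equivariant isomorphism is then the usual formal consequence of the universal property.

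The main obstacle is purely conceptual: one has to spot that, although $H$ by itself does not make $Y\times G$ a torsor, adjoining the commuting $G$-action on the second factor upgrades $Y\times G$ to an $(H\times G)$-torsor in which $H$ is normal. Once that is done, Proposition~\ref{prop X/N}\ref{normal part} does all the heavy lifting (existence of the quotient as an affine $G$-torsor, identification of its coordinate ring as the invariants), and the rest of the argument is formal manipulation of universal properties.
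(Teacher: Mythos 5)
Your proof is correct, and the object you build is the same one the paper uses, namely $(Y\times G)/H$ with the residual right-translation $G$-action on the second factor; your verification of the universal property (showing $(y,g)\mapsto\phi(y).g$ is constant on $H$-orbits and $G$-equivariant, then invoking the universal property of the sheaf quotient) is essentially identical to the paper's. Where you genuinely diverge is in how you establish that the quotient exists as an affine scheme and is a $G$-torsor. The paper argues directly: it checks by hand that $([y,g],\tilde g)\mapsto([y,g],[y,g\tilde g])$ is bijective on $R$-points of the orbit functor $(Y\times G)\qq H$, sheafifies, trivializes over a field extension where the sheaf has a point to deduce affineness by descent, and then applies Proposition~\ref{quotient} to identify the coordinate ring with $(K[Y]\otimes_K K[G])^H$. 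You instead observe that the $H$-action of the statement, combined with right translation of $G$ on the second factor, makes $Y\times G$ an $(H\times G)$-torsor (the unique $h$ comes from the $H$-torsor $Y$, and then $g'=g_1^{-1}hg_2$ is forced), in which $H\times\{1\}$ is closed and normal with quotient $G$; Proposition~\ref{prop X/N}\ref{normal part} then delivers in one stroke the affineness, the coordinate ring $(K[Y]\otimes_K K[G])^H$, and the torsor structure, and the residual action $[(y,g)].g'=[(y,gg')]$ is indeed the second-factor action. This is a clean reduction that avoids redoing the sheafification-and-descent argument; it is slightly less self-contained only in the sense that Proposition~\ref{prop X/N}\ref{normal part} rests on the same descent machinery, and there is no circularity since that proposition is proved independently of induced torsors.
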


\begin{proof}
Set $\F=(Y\times G)\qq H$, and write $[y,g] \in \F(R)$ for the $H(R)$-orbit of an element $(y,g)\in Y(R)\times G(R)$, where $R$ is a $K$-algebra. We define a group action of $G$ on the functor $\F$ by 
$$\F(R)\times G(R)\to \F(R), \ ([y,g],\tilde g)\mapsto [y,g\tilde g] \ \text{for} \ y \in Y(R)\ \text{and}\ g,\!\tilde g \in G(R). $$
Letting $X$ be the sheafification $((Y \times G)\qq H)\widetilde{\ \ }$, 
we obtain a group action of $G$ on $X$ given by the 
sheafification $X\times G \to X$ of the above morphism. 
For every $K$-algebra $R$ and every two elements $[y,g], [y',g'] \in \F(R)$, there exists a unique $\tilde g \in G(R)$ with $[y,g\tilde g]=[y',g']$, since $Y$ is an $H$-torsor. Therefore, the map $\F(R)\times G(R)\to \F(R)\times \F(R)$ given by $([y,g],\tilde g)\mapsto ([y,g],[y,g\tilde g])$ is bijective for every $K$-algebra $R$. Thus $\F \times G \to \F \times \F$ is an isomorphism of functors, and so its sheafification 
$X \times G \to X \times X$ is an isomorphism (here we used the canonical isomorphisms 
$\tilde \F \times \tilde \F \cong \widetilde{\F \times \F}$ 
and $\tilde \F \times G \cong \tilde \F \times \tilde G \cong \widetilde{\F \times G}$). 
 
Since $Y$ is non-empty, $X$ is non-empty and we may fix a field extension $L/K$ such that $X,Y$ have an $L$-point.  Pick such a point $x\in X(L)$. Then the formula $g \mapsto x.g$ defines an isomorphism $G_L\cong X_L$. In particular, the base change $X_L$ of $X$ from $K$ to $L$ is an affine scheme. As $L$ is faithfully flat over $K$ and $X$ is a sheaf, it follows that $X$ is an affine scheme, via faithfully flat descent, as in the proof of Proposition~\ref{prop X/N}\ref{X/H part}.
It also follows that $X$ is of finite type over $K$.
So we may write $X = (Y \times G)/H$.   
Proposition \ref{quotient} implies $K[X]=(K[Y]\otimes_K K[G])^H$.  (Recall that $(K[Y]\otimes_K K[G])^H=\{ f \in K[Y]\otimes_K K[G] \mid \rho(f)=f\otimes 1\}$, where $\rho: K[Y]\otimes_K K[G] \to K[Y]\otimes_K K[G]\otimes_K K[H]$ is the co-action corresponding to the action of $H$ on $Y\times G$.)  The isomorphism $X \times G \to X \times X$ shows that $X$ is a $G$-torsor.
 	
	We define a morphism $Y\to\F$ by sending $y\in Y(R)$ to the equivalence class $[y,1]$ of $(y,1)\in (Y\times G)(R)$ for any $K$-algebra $R$. Composing with $\F\to \ind$ yields a morphism $\psi\colon Y\to\ind$. Since $[y.h,1]=[y,h]$ for $h\in H(R)$, we see that $\psi$ is $H$-equivariant. Now let $\psi'\colon Y\to Z$ be any $H$-equivariant morphism from $Y$ to an affine $G$-space $Z$. The $G$-equivariant
	morphism  $Y\times G\to Z,\ (y,g)\mapsto\psi'(y).g$ is constant on $H$-orbits and therefore induces a unique $G$-equivariant morphism
	$\ind\to Z$ making (\ref{eq: diagram for ind}) commutative, 
	by the universal property of the quotient. 
	This proves the universal property of induced torsors that was stated in the proposition.
	
Finally, the universal property satisfied by $\ind$ shows that this is unique up to a unique isomorphism.	
\end{proof}

\begin{rem} \label{rem: uniqueness of ind}
\renewcommand{\theenumi}{(\alph{enumi})}
\renewcommand{\labelenumi}{(\alph{enumi})}
\begin{enumerate}
\item \label{ind rem a}
Let $H$ be a closed subgroup of $G$. If $Y\to X$ is an $H$-equivariant morphism from an $H$-torsor $Y$ to a $G$-torsor $X$ then $X=\ind$. This is clear because $G$-equivariant morphisms between $G$-torsors are isomorphisms.
\item \label{ind rem b}
Consider the special case that $G$ is a semi-direct product $N \rtimes H$ of closed subgroups with $N$ normal;
i.e., $G(A)=N(A)\rtimes H(A)$ for every $K$-algebra $A$.
Here we may take 
$Z=Y$ and $Y \to Z$ the identity map in Proposition~\ref{prop: universal property of ind}, where $G$ acts on $Y$ through $H=G/N$.  The proposition then yields
a canonical $G$-equivariant morphism $\ind \to Y$, for which the given morphism $Y \to \ind$ is a section.
\item \label{ind rem c}
Using the universal property, it is immediate that the $G$-torsor induced from the trivial $H$-torsor is trivial.
\item \label{ind rem d}
As above, let $H$ be a closed subgroup of $G$ defined over $K$, and let $Y$ be an $H$-torsor over $K$.
If $L$ is a field extension of $K$, the universal property of induced torsors implies that 
$\bigl(\Ind_H^G(Y)\bigr)_L$ is canonically isomorphic to $\Ind_{H_L}^{G_L}(Y_L)$.
\end{enumerate}
\end{rem}

\begin{lem} \label{lem: canonical map is embedding}
	The morphism $\psi\colon Y\to X=\ind$ given by Proposition \ref{prop: universal property of ind} is a closed embedding; i.e., the dual map $\psi^*\colon K[X]\to K[Y]$ is surjective.
\end{lem}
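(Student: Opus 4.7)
The plan is to trivialize the torsor $Y$ via a faithfully flat base change to some field extension $L/K$ and argue that, over $L$, the morphism $\psi$ becomes the canonical inclusion $H_L\hookrightarrow G_L$, which makes surjectivity of $\psi_L^*$ evident. Since surjectivity of a $K$-linear map descends along faithfully flat extensions, this will suffice.

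Concretely, since $Y$ is non-empty and of finite type over $K$, it has a point over some field extension $L/K$; I would fix $y_0\in Y(L)$ and set $x_0:=\psi(y_0)\in X(L)$. As $Y$ and $X$ are torsors under $H$ and $G$ respectively, the maps $H_L\to Y_L$, $h\mapsto y_0.h$, and $G_L\to X_L$, $g\mapsto x_0.g$, are isomorphisms of torsors over $L$. The $H$-equivariance of $\psi$ gives $\psi_L(y_0.h)=\psi(y_0).h=x_0.h$ for every $h\in H_L$, so under these trivializations the base change $\psi_L$ is identified with the inclusion $H_L\hookrightarrow G_L$. Since $H$ is a closed subgroup of $G$ by hypothesis, this inclusion is a closed embedding, i.e., $\psi_L^*\colon L[X]\to L[Y]$ is surjective.

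Finally, since $L[X]=L\otimes_K K[X]$ and $L[Y]=L\otimes_K K[Y]$, tensoring the cokernel $C$ of $\psi^*\colon K[X]\to K[Y]$ with $L$ yields the cokernel of $\psi_L^*=\id_L\otimes_K\psi^*$, which vanishes; faithful flatness of $L/K$ then forces $C=0$, so $\psi^*$ is surjective. No serious obstacle is anticipated: the argument is a trivialization-and-descent exercise, with the substantive content already encoded in the construction of $\ind$ via the universal property of Proposition~\ref{prop: universal property of ind}.
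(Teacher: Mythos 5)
Your proof is correct and follows essentially the same route as the paper's: trivialize both torsors over a field extension where $Y$ acquires a point, use $H$-equivariance to identify $\psi$ with the closed embedding $H\hookrightarrow G$, and descend surjectivity of $\psi^*$ by faithful flatness. The paper phrases the reduction as ``we may assume $K$ is algebraically closed,'' but the content is identical.
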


\begin{proof}
	By flatness, we may assume that $K$ is algebraically closed. Let $y\in Y(K)$ and define an isomorphism $H\to Y$ by $h\mapsto y.h$. Similarly, let $x=\psi(y)$ and define an isomorphism $G\to X$ by $g\mapsto x.g$. Then
	$$
	\xymatrix{
		Y \ar[r] & X \\
		H \ar[r] \ar^\cong[u] & G \ar_\cong[u]
	}	
	$$
	commutes. Since $H\to G$ is a closed embedding, $Y\to X$ is also a closed embedding.	
\end{proof}

Since $Y\to\ind$ is a closed embedding, we will in the sequel identify $Y$ with a closed subscheme of $\ind$.

Let $G$ be a finite (abstract) group. Recall (see e.g., \cite[Section 18.B]{KMRT}) that a finite separable (commutative) $K$-algebra $L$ equipped with a $G$-action from the left is called a \emph{$G$-Galois algebra} if $\dim_KL=|G|$ and the invariant subfield $L^G$ equals $K$. As noted there, these algebras are precisely the coordinate rings of $G$-torsors over $K$, where we view $G$ as a constant group scheme over $K$ (i.e., with trivial action of the absolute Galois group of $K$). 
The following example shows that the notion of induced Galois algebra can be seen as a special case of inducing torsors.

\begin{ex}\label{ex induced algebra}
Let $G$ be a finite group with a subgroup $H$. For an $H$-Galois algebra $L$, there exists an \textit{induced $G$-Galois algebra} $\Ind^G_H(L)$ together with an $H$-equivariant morphism $\pi:\Ind^G_H(L)\to L$; and moreover the pair $(\Ind^G_H(L),\pi)$ is unique up to isomorphism (see \cite[Prop.~18.17]{KMRT} for details).  
Consider $G$ and $H$ as constant group schemes over $K$ and let $Y=\Spec(L)$ denote the $H$-torsor corresponding to $L$. We claim that
$$K\!\!\left[\Ind^G_H(Y)\right]\cong\Ind^G_H(L)\quad \text{as $G$-Galois algebras}.$$
To see this, note that if $H\to G$ is a morphism of algebraic groups, $Y$ an $H$-torsor, $X$ a $G$-torsor and $\psi:Y\to X$ an $H$-equivariant morphism, then $\psi^*\otimes_K R: F[Y]\otimes_K R\to F[X]\otimes_K R$ is $H(R)$-equivariant for any $K$-algebra $R$.
So, as $Y\to \Ind^G_H(Y)$ is $H$-equivariant (choosing $R=K$), so is the dual map $K\!\!\left[\Ind^G_H(Y)\right]\to L$. Therefore $K\!\!\left[\Ind^G_H(Y)\right]\cong\Ind^G_H(L)$.
\end{ex}

Motivated by this example, if more generally $Y = \Spec(R)$ in the situation of Proposition~\ref{prop: universal property of ind}, we will write $\Ind_H^G(R)$ for the coordinate ring $K[\ind]$.  Thus $\ind = \Spec(\Ind_H^G(R))$.
By Proposition~\ref{prop: universal property of ind}, $\Ind_H^G(R)$ is the invariant ring $(R \otimes_K K[G])^H$,
where the action of $H$ on $R \otimes_K K[G]$ corresponds to the geometric action of $H$ on $Y \times G$ given in that proposition.

\begin{rem} \label{rem ind GG}
If $Y = \Spec(R)$ is a $G$-torsor over $K$, then by setting $H=G$ in Proposition~\ref{prop: universal property of ind}
we see that $Y$ is canonically isomorphic to $\Ind_G^G(Y)$.  The isomorphism can be made explicit on the level of coordinate rings.  Namely, the co-action $\rho:R \to R \otimes_K K[G]$ has image $\Ind_G^G(R)=(R \otimes_K K[G])^G \subseteq R \otimes_K K[G]$, and $\rho$ defines an isomorphism of $R$ with $\Ind_G^G(R)$.  
\end{rem}

\begin{prop} \label{lem ind mod}
Let $G$ be a smooth affine group scheme of finite type over a field $K$, let $E,N$ be closed subgroups of $G$ with $N$ normal, and let $X=\Spec(R)$ be an $E$-torsor over $K$.  
Consider the functorial $G$-action on $\Ind_E^G(R)$ corresponding to the $G$-torsor structure of its spectrum.  With respect to the restriction of this action to $N$, 
$(\Ind_E^G(R))^N = \Ind_{E/(E\cap N)}^{G/N}(R^{E \cap N})$ as subalgebras of $\Ind_E^G(R)$.  
\end{prop}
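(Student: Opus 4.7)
The plan is to compute $(\Ind_E^G(R))^N$ by unfolding the identification $\Ind_E^G(R) = (R \otimes_K K[G])^E$ from Proposition \ref{prop: universal property of ind} and then exchanging the order in which invariants are taken. First I would verify that the functorial $G$-torsor action $[x,g].g' = [x,gg']$ on $\Ind_E^G(X) = (X \times G)/E$ is the descent of the right translation action $(x,g).g' = (x,gg')$ on $X \times G$. Restricting to $N$, one checks directly from the formulas that the $E$-action $(x,g).e = (x.e, e^{-1}g)$ and the $N$-action $(x,g).n = (x,gn)$ commute. Hence
\[ (\Ind_E^G(R))^N = \bigl((R \otimes_K K[G])^E\bigr)^N = \bigl((R \otimes_K K[G])^N\bigr)^E, \]
and it suffices to evaluate the right-hand side.

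Next I would compute the inner $N$-invariants. Since $N$ acts trivially on $R$ and by right translation on $K[G]$, flatness of $K$-modules gives $(R \otimes_K K[G])^N = R \otimes_K K[G]^N$; and by Proposition \ref{prop X/N}\ref{normal part} applied to the $G$-torsor $G$, $K[G]^N = K[G/N]$. The residual $E$-action on $R \otimes_K K[G/N]$ acts on $R$ via its $E$-torsor structure and on $K[G/N]$ via the composition $E \hookrightarrow G \twoheadrightarrow G/N$, which sends $[g].e = [e^{-1}g]$. In particular $E \cap N$ acts trivially on $K[G/N]$, so flatness again gives
\[ (R \otimes_K K[G/N])^{E\cap N} = R^{E \cap N} \otimes_K K[G/N]. \]

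Finally, $E \cap N$ is normal in $E$ (as $N$ is normal in $G$), so by Proposition \ref{prop X/N}\ref{normal part} $\Spec(R^{E \cap N})$ is an $\bar E$-torsor, where $\bar E := E/(E \cap N)$. The quotient $\bar E$ embeds into $G/N$ via the induced map, and the residual $\bar E$-action on $R^{E \cap N} \otimes_K K[G/N]$ is the $\bar E$-torsor action on the first factor together with the action $[g].[e]=[e]^{-1}[g]$ on $G/N$; this matches the recipe in Proposition \ref{prop: universal property of ind} for induction. Therefore
\[ (R^{E \cap N} \otimes_K K[G/N])^{\bar E} = \Ind_{\bar E}^{G/N}(R^{E \cap N}) = \Ind_{E/(E \cap N)}^{G/N}(R^{E \cap N}), \]
and combining all the equalities yields the claim, with both sides sitting naturally inside $R \otimes_K K[G]$.

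The main obstacle is bookkeeping: one must carefully keep straight the several commuting actions (the torsor action of $E$ on $X$, right translation of $N$ on $G$, the descended action of $\bar E$ on $G/N$), and check that the identifications of invariants truly match as subalgebras, not merely abstractly. Once that bookkeeping is set up, each individual step reduces to flatness of $K$-modules together with Propositions \ref{prop X/N} and \ref{prop: universal property of ind}.
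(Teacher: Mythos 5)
Your proposal is correct and follows essentially the same route as the paper's proof: identify both sides as invariant rings inside $R \otimes_K K[G]$, swap the order of taking $E$- and $N$-invariants using that the two actions commute, compute $(R\otimes_K K[G])^N = R\otimes_K K[G/N]$, use normality of $N$ to see that $E\cap N$ acts trivially on $K[G/N]$ (the content of Lemma~\ref{lem left right}\ref{lem same invariants}, which the paper cites at this step), and conclude that the residual $E$-action factors through $E/(E\cap N)$ and realizes the induction recipe of Proposition~\ref{prop: universal property of ind}.
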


\begin{proof}
By Proposition~\ref{prop: universal property of ind}, $\Ind_E^G(R) = (R \otimes_K K[G])^E$, where the $E$-invariants are taken with respect to the group action
given geometrically by $(x,g).h = (x.h,h^{-1}g)$.  By Proposition~\ref{prop X/N}, $X/(E\cap N)=\operatorname{Spec}(R^{E\cap N})$, and this is a torsor for $E/(E\cap N)$. Moreover, 
$\Ind_{E/(E\cap N)}^{G/N}(R^{E \cap N}) = (R^{E \cap N} \otimes_K K[G/N])^{E/(E\cap N)}$, with respect to the corresponding group action of $E/(E\cap N)$.  
Observe that the rings $(\Ind_E^G(R))^N$ and $\Ind_{E/(E\cap N)}^{G/N}(R^{E \cap N})$ in the statement of the proposition are indeed both contained in
$\Ind_E^G(R)$, whose spectrum is a $G$-torsor.  The restriction to $N$ of the corresponding functorial $G$-action on $\Ind_E^G(R)$ extends to $R \otimes_K K[G]$, and is given geometrically by right multiplication on the second factor.  Note that this action of $N$ commutes with the above action of $E$.  Hence
$(\Ind_E^G(R))^N$ is the ring of $E$-invariants of the $K$-algebra $R \otimes_K K[G]^N$. Equivalently, it is the ring of $E$-invariants of $(R \otimes_K K[G]^N)^{E \cap N}$, where $E \cap N$ acts via the restriction of the above action of $E$.
But $(R \otimes_K K[G]^N)^{E \cap N}  
 = R^{E \cap N} \otimes_K K[G]^N$, since the right-invariant subring $K[G]^N$ is also left-invariant under $N$ by Lemma~\ref{lem left right}(a), by normality of $N$.
The actions of $E$ on the spectra of $R^{E \cap N}$ and on $K[G]^N$
are functorially defined over each $K$-algebra, and their restrictions to
$E \cap N$ are trivial.  Hence 
the action of $E$ factors through $E/(E \cap N)$, and the above $E$-invariant ring is the same as the ring of $E/(E \cap N)$-invariants of $R^{E \cap N} \otimes_K K[G/N]$, viz.\ $\Ind_{E/(E\cap N)}^{G/N}(R^{E \cap N})$.
\end{proof}

\begin{cor} \label{ind cor}
Consider a smooth affine group scheme $G$ of finite type over a field $K$, with $G$ a semi-direct product $N \rtimes H$ of a closed normal subgroup $N$ and a closed subgroup $H$.  
\renewcommand{\theenumi}{(\alph{enumi})}
\renewcommand{\labelenumi}{(\alph{enumi})}
\begin{enumerate}
\item \label{ind N mod N}
If $\Spec(R)$ is an $N$-torsor over $K$, then $(\Ind_N^G(R))^N$ equals $K[H]$; i.e., it is the subalgebra 
$\Ind_1^{G/N}(K) = K \otimes_K K[H]$ of $\Ind_N^G(R) \subseteq R \otimes_K K[G]$.
\item \label{ind H mod N}
If $\Spec(R)$ is an $H$-torsor over $K$, then $(\Ind_H^G(R))^N$ equals $R$; i.e., it is the subalgebra 
$Ind_H^H(R) \subseteq R \otimes_K K[H]$ of $\Ind_H^G(R) \subseteq R \otimes_K K[G]$ 
(where $R$ maps isomorphically to $Ind_H^H(R)$ by the co-action map $\rho_H$ corresponding to the action of $H$; cf.\ Remark~\ref{rem ind GG}).
\end{enumerate}
\end{cor}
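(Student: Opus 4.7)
The plan is to deduce both parts as direct specializations of Proposition~\ref{lem ind mod}, which expresses $(\Ind_E^G(R))^N$ as an induction from $E/(E\cap N)$ to $G/N$ of the $(E\cap N)$-invariant subring. The two parts correspond to the two obvious choices $E=N$ and $E=H$ in the semi-direct product decomposition $G=N\rtimes H$, and the main task is to identify $E\cap N$ and the resulting induction in each case.

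For part~\ref{ind N mod N}, I would take $E=N$ in Proposition~\ref{lem ind mod}, so that $E\cap N=N$ and the conclusion reads
\[
(\Ind_N^G(R))^N \;=\; \Ind_{N/N}^{G/N}\bigl(R^N\bigr) \;=\; \Ind_1^{G/N}(R^N).
\]
Since $\Spec(R)$ is an $N$-torsor and $N$ is (trivially) normal in $N$, Proposition~\ref{prop X/N}\ref{normal part} gives $\Spec(R^N)=\Spec(R)/N$, which is a trivial $(N/N)$-torsor; hence $R^N=K$. Using the semi-direct product decomposition to identify $G/N$ with $H$, Proposition~\ref{prop: universal property of ind} then gives $\Ind_1^{G/N}(K)=(K\otimes_K K[G/N])^1=K[H]$, which under the inclusion into $R\otimes_K K[G]$ is exactly the subalgebra $K\otimes_K K[H]$.

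For part~\ref{ind H mod N}, I would take $E=H$. Since $G=N\rtimes H$, the intersection $H\cap N$ is trivial, so Proposition~\ref{lem ind mod} gives
\[
(\Ind_H^G(R))^N \;=\; \Ind_{H/1}^{G/N}\bigl(R^{1}\bigr) \;=\; \Ind_H^H(R),
\]
where in the last step I again use the canonical identification $G/N\cong H$. By Remark~\ref{rem ind GG}, the co-action map $\rho_H\colon R\to \Ind_H^H(R)\subseteq R\otimes_K K[H]$ is an isomorphism, which identifies this ring of invariants with $R$ as a subalgebra of $\Ind_H^G(R)\subseteq R\otimes_K K[G]$.

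There is no real obstacle here: once Proposition~\ref{lem ind mod} is in hand, the corollary is a matter of chasing which subgroup $E\cap N$ and which quotient $G/N$ appear, and invoking the basic identifications of $R^N$ for $N$-torsors (Proposition~\ref{prop X/N}\ref{normal part}) and of $\Ind_G^G$ with the identity functor (Remark~\ref{rem ind GG}). The only point to be a little careful about is matching the inclusions into $R\otimes_K K[G]$ stated in the corollary with the ones coming out of Proposition~\ref{lem ind mod}, but this is transparent from the explicit construction of $\Ind_E^G$ as $(R\otimes_K K[G])^E$ in Proposition~\ref{prop: universal property of ind}.
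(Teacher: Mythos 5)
Your proposal is correct and matches the paper's proof exactly: the paper also deduces both parts as the special cases $E=N$ and $E=H$ of Proposition~\ref{lem ind mod}, merely stating this in one line where you spell out the identifications of $E\cap N$, of $R^{E\cap N}$, and of $G/N$ with $H$. The extra details you supply (that $R^N=K$ for an $N$-torsor, and the use of Remark~\ref{rem ind GG} for $\Ind_H^H$) are all accurate and consistent with how the corollary is stated.
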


\begin{proof}
These are special cases of Proposition~\ref{lem ind mod}, with $E$ equal to $N$ or $H$, respectively.
\end{proof}

\medskip

\noindent Author information:

\medskip

\noindent Annette Bachmayr (n\'{e}e Maier): Mathematisches Institut der Universit\"at Bonn,
D-53115 Bonn, Germany.\\ email: {\tt bachmayr@math.uni-bonn.de}

\medskip

\noindent David Harbater: Department of Mathematics, University of Pennsylvania, Philadelphia, PA 19104-6395, USA.\\ email: {\tt harbater@math.upenn.edu}

\medskip

\noindent Julia Hartmann:  Department of Mathematics, University of Pennsylvania, Philadelphia, PA 19104-6395, USA.\\ email: {\tt hartmann@math.upenn.edu}

\medskip

\noindent Michael Wibmer: Department of Mathematics, University of Pennsylvania, Philadelphia, PA 19104-6395, USA.\\ email: {\tt wibmer@math.upenn.edu}

\end{document}